\definecolor{darkblue}{rgb}{0,0,0.4} 
\newtheorem{theorem}{Theorem}[section]
\newtheorem{lemma}[theorem]{Lemma}
\newtheorem{proposition}[theorem]{Proposition}
\theoremstyle{remark}
\newtheorem{remark}[theorem]{Remark}
\theoremstyle{definition}
\numberwithin{equation}{section}
\DeclareMathOperator{\lk}{lk}
\DeclareMathOperator{\ks}{ks}
\DeclareMathOperator{\Arf}{\mathrm{Arf}}
\DeclareMathOperator{\Int}{\mathrm{Int}}
\newcommand{\del}{\partial}
\newcommand{\N}{\mathbb{N}}
\newcommand{\R}{\mathbb{R}}
\newcommand{\Z}{\mathbb{Z}}
\newcommand{\set}[1]{\left\{#1\right\}}
\renewcommand{\a}{\alpha}
\renewcommand{\b}{\beta}
\newcommand{\h}{\eta}
\newcommand{\CP}[1]{\mathbb{CP}^{#1}}
\newcommand{\bCP}[1]{\overline{\mathbb{CP}^{#1}}}
\title{Smoothly slice links in $\CP2\#\bCP2$}
\author{Marco Marengon and Clayton McDonald}
\date{}
\begin{document}

\begin{abstract}
We show that there exists a link with 2 components which is not smoothly slice in $\CP2\#\bCP2$.
By contrast, it is well-known that every knot (i.e., link with 1 component) is smoothly slice therein.
Our proof uses classical topological and smooth obstructions, as well as constructive arguments to exploit the symmetries of the problem.
As a consequence, we show that there are infinitely many integer homology 3-spheres such that if any of them bounds a ribbon integer homology 4-ball, than there exists an exotic $\CP2\#\bCP2$.
\end{abstract}

\maketitle

\tableofcontents

\section{Introduction}

A popular strategy to disprove the 4-dimensional Poincar\'e conjecture was formulated by Freedman-Gompf-Morrison-Walker in \cite{FGMW}: simply put, their idea is that if a knot $K \subset S^3$ bounds a smooth disc in a homotopy 4-ball $X$, but not in the standard 4-ball $B^4$ (i.e., it is not slice), then $X$ must be exotic.

Sliceness of knots in 4-manifolds has been used to great success in studying exotic manifolds, see \cite{Akbulut} and \cite[Exercise 9.4.23]{GS}, but the particular case of compact manifolds with $S^3$ boundary has remained elusive.

For the purposes of this paper, we say that  $K \subset S^3$ is \emph{slice in $X$} if it bounds a smooth disc in $X^\circ := X \setminus B^4$, which is called a \emph{slice disc}. Classically, this problem has mostly been studied in the 4-sphere (so that $X^\circ = B^4$), but recent progress has been made on the study of slice knots in 4-manifolds other than $S^4$, see for example \cite{KR, MMSW, Ren1, Ren2, MM:slice}. A recent result shows that exotic pairs can be detected by studying \emph{null-homologous} slice discs \cite{MMP}, but the question of whether the set of knots slice in a given 4-manifold can detect exotic structures is still open.

In this paper we consider the analogous question for links.
Here and in what follows we say that an $n$-component link $L$ is \emph{smoothly} or \emph{topologically (strongly) slice in $X$} if $L$ bounds a collection of $n$ disjoint smooth or locally flat discs in $X^\circ$.

%

\begin{theorem}
    \label{thm:CP2bCP2}
    The 2-component link in Figure \ref{fig:CP2bCP2link} is not smoothly slice in $\CP2 \# \bCP2$.
\end{theorem}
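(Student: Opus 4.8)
The plan is to argue by contradiction. Suppose the link $L = K_1 \cup K_2$ of Figure~\ref{fig:CP2bCP2link} bounds disjoint smooth discs $D_1, D_2$ in $X^\circ$, where $X = \CP2\#\bCP2$. I would first extract the homological data of the configuration. Write $H_2(X;\Z) = \Z\langle h\rangle \oplus \Z\langle e\rangle$ with $h^2 = 1$, $e^2 = -1$. Capping off each $D_i$ with a minimal-genus surface for $K_i$ pushed into the removed $B^4$ produces disjoint closed embedded surfaces $\hat D_1, \hat D_2 \subset X$ with $[\hat D_i] = a_i h + b_i e$, with $g(\hat D_i) = g_4(K_i)$ (in particular, spheres if the components of $L$ are slice knots), and with $[\hat D_1]\cdot[\hat D_2] = \lk(K_1, K_2)$. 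Since $X = \CP2\#\bCP2$ is a rational ruled surface, the symplectic Thom conjecture (Ozsv\'ath--Szab\'o) and the adjunction inequality for classes of nonnegative square sharply restrict which $(a_i, b_i)$ can support an embedded surface of genus $g_4(K_i)$; combined with the linking constraint $a_1 a_2 - b_1 b_2 = \lk(K_1,K_2)$ and with disjointness of $\hat D_1$ and $\hat D_2$, this should cut the problem down to a short explicit list of candidate pairs $\bigl([\hat D_1],[\hat D_2]\bigr)$.

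Next I would exploit the symmetry of $L$: the link carries an involution exchanging $K_1$ and $K_2$, which should be matched by an orientation-preserving self-diffeomorphism of $X$ interchanging the roles of the $\CP2$ and $\bCP2$ summands. This symmetry acts on the candidate list and collapses it further. For the remaining candidates I would turn to constructive arguments. The key moves are: tubing a disc along an embedded arc into the exceptional sphere of square $-1$ (or into a projective line of square $+1$), which after a small isotopy is disjoint from the configuration, thereby changing $[D_i]$ by $\pm e$ (resp.\ $\pm h$) while keeping all discs embedded, disjoint, and of genus $0$; and blowing down the exceptional sphere once both discs have been made disjoint from it. Iterating these moves should drive every surviving candidate either to the configuration in which both discs are null-homologous and contained in a ball — which exhibits $L$ as smoothly slice in $B^4$, contradicting the classical fact (checked via a signature, Arf, or Sato--Levine computation for $L$) that it is not — or else to a single distinguished configuration that admits no further simplification.

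Finally I would obstruct the distinguished configuration directly. Passing to the double branched cover, $\Sigma_2(L)$ bounds the double cover $V$ of $X^\circ$ branched along $D_1 \cup D_2$, a $4$-manifold whose intersection form is now completely determined by the pinned-down classes $[D_i]$; if that form is negative definite, Donaldson's diagonalization theorem together with the Ozsv\'ath--Szab\'o correction-term inequalities imposes a numerical condition on $\Sigma_2(L)$ that one verifies (from the surgery description of $L$) is violated. If instead that configuration is topologically permissible, I would fall back on a classical topological invariant in the same configuration — a Casson--Gordon invariant, or a Tristram--Levine signature of the relevant covering link — to rule out even topological sliceness there.

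The step I expect to be the real obstacle is the second one: the intersection form does not record how the discs are knotted around the exceptional and line classes (which have nonpositive square and are therefore invisible to adjunction), so eliminating those configurations is not a matter of invariants but requires the explicit geometric simplifications above, together with a genuine argument that the induction on complexity terminates.
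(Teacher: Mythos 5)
There is a genuine gap, and it sits exactly where you predict: in the ``constructive'' middle step. Your first step is essentially the paper's (the adjunction-type input is Ruberman's computation of the full smooth genus function of $\CP2\#\bCP2$, and the paper likewise uses the $A\leftrightarrow B$ symmetry of the link and the symmetries of the intersection form to prune the list of candidate pairs $(\alpha,\beta)$) --- though note you must not restrict attention to classes of nonnegative square, since the surviving candidates, e.g.\ $(a,a+1)$, typically have negative square, and also the capped-off surfaces $\hat D_1,\hat D_2$ are \emph{not} disjoint: they meet in the $B^4$ cap according to the linking number. The problem is the reduction by tubing. The exceptional sphere $e$ (or a line $h$) pairs nontrivially with the classes $[D_i]$ under consideration, so it cannot be isotoped off the configuration; the move ``change $[D_i]$ by $\pm e$ while keeping both discs embedded, disjoint, and of genus $0$'' is simply not available in general. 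Even where a tube can be attached, the implication runs the wrong way for termination: it shows one configuration yields another, so you must ultimately obstruct some terminal configuration, and that is the entire difficulty. Worse, your intended terminal state --- ``both discs null-homologous and contained in a ball'' --- conflates H-sliceness with sliceness: a pair of null-homologous disjoint discs in $(\CP2\#\bCP2)^\circ$ need not lie in any $B^4$, so you cannot conclude a contradiction with non-sliceness of $L$ in $B^4$; obstructing the null-homologous case already requires an invariant (the paper uses the $[\Sigma]=0$ case of the Levine--Tristram bound). The final fallback (double branched covers, Donaldson diagonalization, Casson--Gordon) is left entirely speculative.

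The paper closes the gap by a mechanism your proposal never touches: the link is built with a specific band/twist structure (assumption \eqref{it:A1}) so that $A\#B$, $A\#B^r\#T_{2,2n\pm1}$, and the cabled sums $A\#B_{(2,q)}$ bound discs in the classes $\alpha+\beta$, $\alpha-\beta$, and $\alpha+2\beta$ respectively. Choosing the linking number and the Levine--Tristram signatures of the components appropriately forces these combined classes to be divisible by $2$, $3$, or $5$ in each surviving case, at which point Theorem \ref{thm:signature} (together with the Arf invariant for characteristic classes) kills every candidate pair. If you want to salvage your outline, you would need to replace the tubing induction with some such device for converting information about the pair $(\alpha,\beta)$ into an obstruction for a single knot in a \emph{divisible} class.
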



\begin{figure}[h]
    \centering
\begingroup%
  \makeatletter%
  \providecommand\color[2][]{%
    \errmessage{(Inkscape) Color is used for the text in Inkscape, but the package 'color.sty' is not loaded}%
    \renewcommand\color[2][]{}%
  }%
  \providecommand\transparent[1]{%
    \errmessage{(Inkscape) Transparency is used (non-zero) for the text in Inkscape, but the package 'transparent.sty' is not loaded}%
    \renewcommand\transparent[1]{}%
  }%
  \providecommand\rotatebox[2]{#2}%
  \newcommand*\fsize{\dimexpr\f@size pt\relax}%
  \newcommand*\lineheight[1]{\fontsize{\fsize}{#1\fsize}\selectfont}%
  \ifx\svgwidth\undefined%
    \setlength{\unitlength}{270.79458402bp}%
    \ifx\svgscale\undefined%
      \relax%
    \else%
      \setlength{\unitlength}{\unitlength * \real{\svgscale}}%
    \fi%
  \else%
    \setlength{\unitlength}{\svgwidth}%
  \fi%
  \global\let\svgwidth\undefined%
  \global\let\svgscale\undefined%
  \makeatother%
  \begin{picture}(1,0.36996397)%
    \lineheight{1}%
    \setlength\tabcolsep{0pt}%
    \put(0,0){\includegraphics[width=\unitlength,page=1]{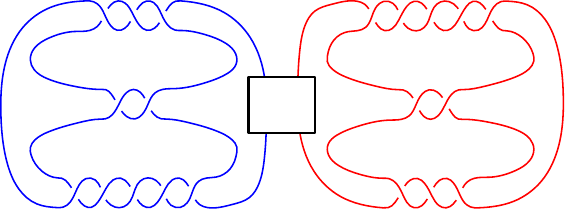}}%
    \put(0.46453481,0.17039761){\color[rgb]{0,0,0}\makebox(0,0)[lt]{\lineheight{1.25}\smash{\begin{tabular}[t]{l}$+29$\end{tabular}}}}%
  \end{picture}%
\endgroup%

    \caption{A 2-component link which is not smoothly slice in $\CP2\#\bCP2$.
    There are $29$ positive full twists in the central box.}
    \label{fig:CP2bCP2link}
\end{figure}

An old result of Norman \cite{N:slice} and Suzuki \cite{S:slice} shows that every knot is slice in $S^2 \times S^2$ and in $\CP2 \#\bCP2$.
On the other hand, in \cite{MY:generalized} Miyazaki-Yasuhara showed that there is a 2-component link that is not topologically slice in $S^2 \times S^2$.
The argument in \cite{MY:generalized} is specific to $S^2 \times S^2$ (see Section \ref{sec:S2xS2} for details), and in particular it left the case of $\CP2 \# \bCP2$ open, which is addressed in Theorem \ref{thm:CP2bCP2} \emph{in the smooth category}.

Our argument to prove Theorem \ref{thm:CP2bCP2} can be adapted to construct 2-components links that are not smoothly slice in $S^2 \times S^2$, and this will be expanded in a separate note. On the other hand, our proof is essentially smooth, so it is unclear if the link in Figure \ref{fig:CP2bCP2link} is topologically slice in $\CP2\#\bCP2$.
While this may seem a drawback of our method, the construction of links that are not \emph{smoothly} slice can be a potential asset in the search of exotic manifolds. 

Recall that $S^2 \times S^2$ and $\CP2\#\bCP2$ are simply connected 4-manifolds that are currently not known to support exotic structures. By contrast, we remark that their (common) blow up $\CP2\#2\bCP2$ admits exotic copies \cite{AP}.
Theorem \ref{thm:CP2bCP2} can theoretically be the starting point for the detection of an exotic $\CP2\#\bCP2$, using a strategy similar to that of \cite{FGMW}.

\begin{proposition}
\label{prop:intro-exotic}
    There exist infinitely many integer homology 3-spheres $\{Y_m\}_{m \in \Z}$ with vanishing Rokhlin invariant such that if any of them bounds an integer homology 4-ball with $\pi_1$ normally generated by the boundary, then there exists an exotic $\CP2\#\bCP2$.
\end{proposition}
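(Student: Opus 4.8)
The plan is to run the Freedman-Gompf-Morrison-Walker argument \cite{FGMW}, with the link $L$ of Figure~\ref{fig:CP2bCP2link} in the role played there by a knot. The engine is a ``relative'' version of the Norman-Suzuki construction \cite{N:slice,S:slice}: suppose an integer homology sphere $Y$ is presented as $1/n$-surgery on a knot $K\subset S^3$ with $n$ odd, in such a way that this surgery turns $L$ into the $2$-component unlink, and suppose moreover that $Y$ bounds a smooth integer homology $4$-ball $W$ with $\pi_1(W)$ normally generated by the image of $\pi_1(\partial W)$. Then, as I explain below, there is a smooth $4$-manifold homeomorphic to $\CP2\#\bCP2$ in which $L$ is smoothly slice, and by Theorem~\ref{thm:CP2bCP2} this manifold is necessarily exotic. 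So Proposition~\ref{prop:intro-exotic} will follow once we exhibit infinitely many distinct integer homology spheres carrying such a presentation, all with vanishing Rokhlin invariant (which is necessary for bounding any homology ball).

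For the homology spheres I would show that the specific link $L$ admits, for every $m\in\Z$, a knot $K_m\subset S^3$ with $\Arf(K_m)=0$ such that $1/(2m+1)$-surgery on $K_m$ turns $L$ into the $2$-component unlink, and then set $Y_m:=S^3_{1/(2m+1)}(K_m)$. Each $Y_m$ is an integer homology sphere, as is any $1/n$-surgery on a knot; its Rokhlin invariant equals $(2m+1)\,\Arf(K_m)=0$; and one can choose the $K_m$ so that the $Y_m$ are pairwise distinct, for instance by letting $\Delta_{K_m}''(1)$ grow, which through the surgery formula $\lambda\bigl(S^3_{1/n}(K)\bigr)=\tfrac n2\,\Delta_K''(1)$ forces the Casson invariants $\lambda(Y_m)$ apart. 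This produces the required infinite family.

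Now suppose one of the $Y_m$ bounds a ribbon homology ball $W$. Let $C_m$ be the surgery cobordism for the $1/(2m+1)$-surgery on $K_m$: a copy of $S^3\times[0,1]$ with two $2$-handles attached near $S^3\times\{1\}$, so that $\partial_- C_m=S^3\supset L$ and $\partial_+ C_m=Y_m$. Since $L$ lies in the complement of $K_m$ (and of the meridian along which the second handle is attached), the product annuli $L\times[0,1]$ avoid both handles and exhibit $L$, through two disjoint embedded annuli in $C_m$, as cobordant to its image in $Y_m$, which by hypothesis is the $2$-component unlink $\mathcal{U}$; capping these annuli off at the $Y_m$ end by the disjoint discs that $\mathcal{U}$ bounds there, we obtain a system of two disjoint smoothly embedded discs for $L$ in $C_m$. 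Put $X^\circ:=C_m\cup_{Y_m}(-W)$ and $X:=X^\circ\cup_{S^3}B^4$; then $L$ is smoothly slice in $X$, since the discs just built lie in $C_m\subset X^\circ$. The manifold $X$ is closed and smooth, so $\ks(X)=0$; because $W$ carries no $2$-cycles, the intersection form of $X$ is the linking matrix $\bigl(\begin{smallmatrix}0&1\\1&\pm(2m+1)\end{smallmatrix}\bigr)$ of the two surgery handles, which is unimodular, indefinite, of signature $0$ and odd (here $2m+1$ being odd is used), hence isomorphic to $\langle1\rangle\oplus\langle-1\rangle$; and $\pi_1(X)=1$, since $\pi_1(C_m)=1$ while van~Kampen's theorem together with the normal-generation hypothesis annihilates $\pi_1(W)$. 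By Freedman's classification of simply connected closed topological $4$-manifolds, $X$ is homeomorphic to $\CP2\#\bCP2$. But $L$ is smoothly slice in $X$ and, by Theorem~\ref{thm:CP2bCP2}, not smoothly slice in the standard $\CP2\#\bCP2$; as smooth sliceness of $L$ in the $S^3$-boundary is a diffeomorphism invariant of the ambient smooth manifold, $X$ cannot be diffeomorphic to $\CP2\#\bCP2$, so it is an exotic $\CP2\#\bCP2$.

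The main obstacle is the construction of the family: exhibiting, for the \emph{concrete} link of Figure~\ref{fig:CP2bCP2link}, an infinite supply of knots $K_m$ of trivial Arf invariant whose $1/(2m+1)$-surgeries unlink $L$, while keeping the $Y_m$ distinct and --- crucially --- \emph{knotted}; note that if some $Y_m$ were $S^3$ the argument would produce an exotic $\CP2\#\bCP2$ unconditionally, so the difficulty is real. Everything else is routine: the bookkeeping with $H_2$, $\ks$ and $\pi_1$ is standard, a smooth closed manifold has vanishing Kirby-Siebenmann invariant automatically, and it is worth emphasising that the homology ball $W$ contributes nothing toward \emph{resolving} intersections of the two discs --- having no $2$-cycles, it cannot be tubed into the way a $\CP2$ or $\bCP2$ summand can --- so all of the geometric content lies in the surgery cobordism $C_m$, with $W$ serving only to cap off $Y_m$ and, through the normal-generation hypothesis, to keep the resulting closed manifold simply connected and hence homeomorphically standard.
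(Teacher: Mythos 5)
Your conditional argument (from the existence of the knots $K_m$ to an exotic $\CP2\#\bCP2$) is essentially sound, but it is a genuinely different route from the paper's, and the part you defer is where it breaks. The paper performs \emph{integral} surgery on the link $L$ itself, with framings $f_A=\lk^2-1$ and $f_B=1$ chosen so that the linking matrix is odd and has determinant $-1$: the cores of the two $2$-handles of the surgery trace are automatically the slice discs, so no unlinking is needed, the infinite family comes from varying the twist parameter in the box (replacing $+29$ by $30\cdot(4m+1)-1$), and the vanishing of the Rokhlin invariant is checked via the Arf invariant of a capped-off Seifert surface.

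The step you leave as ``the main obstacle'' --- producing, for every $m$, a knot $K_m$ disjoint from $L$ such that $1/(2m+1)$-surgery on $K_m$ turns $L$ into the unlink --- is not merely unfinished; it is impossible for all but finitely many $m$. If $Y=S^3_{1/n}(K)$ with $A,B$ disjoint from $K$, the linking number in the homology sphere $Y$ satisfies $\lk_Y(A,B)=\lk_{S^3}(A,B)-n\,\lk(A,K)\,\lk(B,K)$, and for the image of $L$ to bound disjoint discs in $Y$ this must vanish. Hence $n$ must divide $\lk_{S^3}(A,B)=-29$, so only $n\in\{\pm1,\pm29\}$ are available and the family of coefficients $1/(2m+1)$, $m\in\Z$, cannot be realised. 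To salvage the approach you would have to fix such an $n$, vary $K_m$ over infinitely many knots with $\lk(A,K_m)\lk(B,K_m)=\mp29/n$, and still prove geometrically that each surgery unlinks $L$ (a much stronger condition than the homological one), that the $Y_m$ are pairwise distinct, and that $\Arf K_m=0$; none of this is done. The paper's construction sidesteps all of it.
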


These integer homology spheres are constructed as surgeries on links such as in Figure \ref{fig:CP2bCP2link} (or on any link that is not smoothly slice in $\CP2 \# \bCP2$); we show that it is possible to choose such an infinite family with vanishing Rokhlin invariant.
With extra care, one can find similar \textit{rational} homology spheres such that if any of them bounds a $\pi_1$-normal rational ball, then there exists an exotic $\CP2 \# \bCP2$.

This theoretical application is possible since the proof of Theorem \ref{thm:CP2bCP2} uses an essentially smooth (as opposed to topological) ingredient, namely the smooth genus function.
The genus function was successfully employed in \cite{ACMPS} to give a new proof that the (2,1)-cable of the figure-eight knot is not smoothly slice, after a Heegaard Floer theoretic proof was given a year earlier \cite{DKMPS}.

The set of links that are slice in a given 4-manifold contains more precise information than the set of slice knots. In contrast to Norman-Suzuki, a result of Yasuhara implies that for every 4-manifold $X$ there are $(b_2(X)+2)$-component links that are not topologically slice in $X$ \cite{Y:LinkHomology}.
This number of components can be non-minimal. For example, for $\CP2\#\bCP2$ it is 4, but a finer argument shows that there is a 3-component link that is not topologically slice in $\CP2\#\bCP2$ (see Theorem \ref{thm:3-cpt-link-not-top-slice-in-CP2+bCP2}).
It remains an open question whether there is a 2-component link that is not \emph{topologically slice} in $\CP2\#\bCP2$.
In Proposition \ref{prop:b2+1} below, we provide a more efficient (i.e., with fewer components) construction for spin manifolds.

While for every compact 4-manifold there is a link not slice in it, a simple Kirby-diagrammatic argument shows that every link is (smoothly, hence topologically) slice in \emph{some} 4-manifold.
This is a generalisation of the Norman-Suzuki trick \cite{N:slice, S:slice}. (Compare also with \cite[Proposition 4.1]{KR}.)

\begin{proposition}
\label{prop:allslice1}
Let $L \subset S^3$ be an $n$-component link, and let $L_s \subset S^3$ be a sublink of $L$ of $m$ components such that $L_s$ is strongly slice in $B^4$. Then, for all $n_1, n_2 \in \N$ such that $n_1 + n_2 = n-m$, $L$ is smoothly slice in $(\#^{n_1}(S^2 \times S^2)) \# (\#^{n_2}(\CP2 \# \bCP2))$.
\end{proposition}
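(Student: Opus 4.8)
The plan is to exhibit $L$ directly as the boundary of $n$ disjoint smooth discs in $X^\circ = X \setminus B^4$, by generalising the Norman--Suzuki trick \cite{N:slice, S:slice} (compare \cite[Proposition 4.1]{KR}). After relabelling, write $L = L_s \cup K_{m+1} \cup \dots \cup K_n$ with $L_s = L_1 \cup \dots \cup L_m$. Since $L_s$ is strongly slice in $B^4$ it bounds pairwise disjoint embedded discs $\Delta_1, \dots, \Delta_m \subset B^4$. Each remaining component $K_i$ is null-homotopic in $B^4$, hence bounds an \emph{immersed} disc there; after a generic perturbation we may take immersed discs $D_{m+1}, \dots, D_n \subset B^4$ whose union with $\Delta_1, \dots, \Delta_m$ has only finitely many singularities — transverse self-intersections of the $D_i$ and transverse intersections among the discs — which I will call \emph{bad points}. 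Because the $\Delta_k$ were chosen embedded and disjoint, every bad point is incident to at least one $D_i$.

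Next, fix a bijection between $\{m+1,\dots,n\}$ and a list $M_{m+1},\dots,M_n$ consisting of $n_1$ copies of $S^2\times S^2$ and $n_2$ copies of $\CP2\#\bCP2$ (in any order), and form $X^\circ = B^4 \# M_{m+1} \# \dots \# M_n$ by performing each connected sum in a small ball of $\Int(B^4)$ disjoint from $L$ and from all the discs; thus $\Delta_1,\dots,\Delta_m,D_{m+1},\dots,D_n$ survive, unchanged, inside $X^\circ$, and $L \subset S^3 = \partial B^4 = \partial X^\circ$. Each summand $M_i$ contains an embedded $2$-sphere $\Phi^{(i)}$ of self-intersection $0$: a factor sphere when $M_i = S^2\times S^2$, and, when $M_i = \CP2\#\bCP2$, the proper transform of a line through the blown-up point (an embedded sphere of square $1-1 = 0$, equivalently a fibre of the ruling). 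Being square zero, $\Phi^{(i)}$ has a product neighbourhood $\cong S^2\times D^2$, hence infinitely many pairwise disjoint parallel copies, all of them disjoint from every disc.

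Now I remove the bad points one at a time with the Norman piping move. Call a bad point \emph{owned} by $D_i$ if $i$ is the smallest index with $D_i$ incident to it, and process $D_{m+1},D_{m+2},\dots,D_n$ in this order: for each bad point owned by $D_i$, pipe $D_i$ into a \emph{fresh} parallel copy of $\Phi^{(i)}$ along a tube joining a small subdisc of $D_i$ near the bad point to that copy and routed in the complement of the union of all the discs, the pipe being chosen so that the bad point disappears. Such a move fixes $\partial D_i = K_i$, does not change the genus of $D_i$ (it replaces a small subdisc of $D_i$ by a disc built from a tube and a once-punctured sphere), and — this is the crucial point — it alters no other disc and introduces no new bad point: the connecting tube is clean because the union of all the discs has codimension $2$, the parallel copy of $\Phi^{(i)}$ is fresh and therefore disjoint from everything else (including the present $D_i$, since distinct parallel copies of a square-zero sphere are disjoint), and the already-processed discs $D_{m+1},\dots,D_{i-1}$ are untouched because every bad point on them was owned by them or by an earlier disc, hence has already been removed. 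Once $D_i$ has been processed it is an embedded disc disjoint from all the other discs, and it is never modified again. When every $D_i$ has been processed, $\Delta_1,\dots,\Delta_m,D_{m+1},\dots,D_n$ are pairwise disjoint embedded discs in $X^\circ$ bounding $L$, which is the claim.

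The step requiring the most care is the verification that a single summand $M_i$ suffices to eliminate all of $D_i$'s bad points. This uses that $\Phi^{(i)}$ is square zero in two ways: its parallel copies are mutually disjoint, so successive piping moves do not interfere; and after $D_i$ has absorbed one copy it still misses all the remaining copies, so the next move has a clean sphere available. Checking that each individual piping move genuinely kills its bad point without creating new ones (the local picture near a double point or an intersection point, replaced by a tube running to a parallel sphere) is the standard local argument underlying the Norman--Suzuki construction; the ownership ordering is the bookkeeping device that upgrades the single-knot statement to the link statement while spending exactly one summand per component outside $L_s$.
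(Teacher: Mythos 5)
There is a genuine gap at the heart of your construction: the piping move, as you describe it, cannot remove a bad point. You pipe $D_i$ into a parallel copy of a square-zero sphere $\Phi^{(i)}$ that is \emph{disjoint from every disc}, along a tube \emph{routed in the complement of the union of all the discs}. Removing the bad point $p$ means excising a small subdisc $\delta$ containing $p$ from one sheet and gluing in a replacement disc $\delta'$ with $\partial\delta'=\partial\delta$ that is disjoint from the other sheet $B$ through $p$. But $\delta$ meets $B$ transversally in one point, so the closed sphere $\delta\cup(-\delta')$ would have algebraic intersection number $\pm1$ with the surface containing $B$; in your construction $\delta\cup(-\delta')$ represents $\pm[\Phi^{(i)}]$, which pairs to zero with every disc (each disc is null-homologous rel boundary, being contained in $B^4$ up to previously added copies of the $[\Phi^{(j)}]$, and $[\Phi^{(i)}]\cdot[\Phi^{(j)}]=0$ for all $i,j$). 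Hence no such $\delta'$ exists: tubing into a sphere disjoint from all the discs, along a tube in the complement of all the discs, changes the relative homology class of $D_i$ but leaves every intersection point exactly where it was.

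The missing ingredient is the geometric dual. The Norman--Suzuki trick first tubes $D_i$ into an embedded sphere $\Psi^{(i)}\subset M_i$ with $\Psi^{(i)}\cdot\Phi^{(i)}=1$ (the other factor sphere in $S^2\times S^2$; the exceptional sphere, dual to the class of your $\Phi^{(i)}$, in $\CP2\#\bCP2$), so that afterwards each parallel copy of $\Phi^{(i)}$ meets the union of the discs geometrically in exactly one point, lying on $D_i$. Only then can a bad point incident to $D_i$ be killed: one excises a small disc around $p$ on the sheet \emph{other} than the $D_i$-sheet (either sheet, for a self-intersection of $D_i$) and caps its boundary, a meridian of $D_i$, by a tube running \emph{along $D_i$} --- not in the complement of all the discs --- to a fresh punctured parallel copy of $\Phi^{(i)}$. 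With that correction your bookkeeping does give a valid proof spending one summand per component outside $L_s$; it would then be a genuinely different, direct geometric argument from the paper's, which instead identifies the trace of a suitable surgery on $m(L)$ with the $0$-trace of $L_s$ connect-summed with the stated manifold via Kirby calculus.
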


Our strategy to prove Theorem \ref{thm:CP2bCP2} goes as follows. We start by considering a family of 2-component links which have a certain structure and symmetry (see Figure \ref{fig:S2xS2structure}), and we use a series of obstructive methods:
\begin{itemize}
    \item the Arf invariant;
    \item the Levine-Tristram signature function;
    \item the smooth genus function on $\CP2\#\bCP2$.
\end{itemize}
Each of the above methods is effective at obstructing the existence of slice discs only in given homology classes, so the bulk of our work is to find a way to combine the methods above to eliminate all possible homology classes. In order to do so, we will start making assumptions on the link, and we finally prove that there exists a link satisfying all the assumptions we have made.
For the reader's convenience, we list all the assumptions we made in the Appendix at page \pageref{appendix:CP2bCP2}.

\begin{remark}
As we already mentioned above, our method to prove the existence of 2-component non-slice links works only in the smooth category, since it makes essential use of the smooth genus function on $S^2 \times S^2$ and $\CP2\#\bCP2$. The topological genus function on $S^2 \times S^2$ and $\CP2\#\bCP2$ is not known, but even if it were our method would not apply, because every primitive homology class is represented by a torus \cite{LW:locallyflatspheres, KPTR:surfaces}.
\end{remark}

\subsection{Acknowledgements}
We thank Akira Yasuhara for pointing us towards earlier work on this topic.
We are very grateful to Andr\'as Stipsicz and Marco Golla for their support and helpful discussions, and to Brendan Owens for spotting a mistake in a draft of this paper.
We also thank Roberto Gim\'enez Conejero and Daniele Dona for a helpful conversation.
MM acknowledges that:
This project has received funding from the European Union’s Horizon 2020 research and innovation programme under the Marie Sk{\l}odowska-Curie grant agreement No.\ 893282.

\section{Review of some obstructive methods}

Here we review some methods to obstruct the existence of a properly embedded surface of genus $g$ in a 4-manifold $X$ with boundary a given knot $K \subset S^3$. The methods we review in this section are \emph{topological}, i.e.\ they work for locally flat embeddings in topological 4-manifolds.

We refer to \cite{MMP} for a more detailed description of the state of the art of obstructive methods, and we list only the results that we will need for the scope of this paper.

\subsection{Levine-Tristram signatures}

The following theorem gives an effective obstruction in terms of the Levine-Tristram signatures of a knot when the class of the surface is not primitive.

\begin{theorem}[{\cite{V:G-signature, G:G-signature}, see \cite[Theorem 3.6]{MMP} for this statement}]
\label{thm:signature}
    Let $X$ be a topological closed oriented 4-manifold with $H_1(X;\Z)=0$.
    Let $\Sigma \subset X^\circ$ be a locally flat, properly embedded surface of genus $g$, with boundary a knot $K \subset S^3$.
    If the homology class $[\Sigma] \in H_2(X^\circ, \partial X^\circ;\Z) \cong H_2(X;\Z)$ is divisible by a prime power $m=p^k$, then
    \[
    \left| 
    \sigma_K(e^{2\pi r i/m}) + \sigma(X) - \frac{2r(m-r) \cdot [\Sigma]^2}{m^2}
    \right|
    \leq
    b_2(X) + 2g,
    \]
    for every $r=1, \ldots, m-1$.
\end{theorem}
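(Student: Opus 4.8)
The plan is to run the classical cyclic-branched-cover argument that underlies the $G$-signature obstruction, of which this statement is a packaging. Since $[\Sigma] \in H_2(X^\circ,\partial X^\circ;\Z) \cong H_2(X;\Z)$ is divisible by $m$, write $[\Sigma] = m\alpha$. Because $H_1(X;\Z) = 0$, a Mayer--Vietoris argument shows that the abelianization of $\pi_1(X^\circ \setminus \nu(\Sigma))$ admits a surjection onto $\Z/m$ carrying a meridian of $\Sigma$ to $1$. First I would use this homomorphism to form the associated $m$-fold cyclic cover of $X^\circ \setminus \nu(\Sigma)$ and fill it back in over $\Sigma$, obtaining a branched cover $\pi\colon Y \to X^\circ$ with deck group $\Z/m = \langle\tau\rangle$ and branch locus $\Sigma$, whose boundary is the $m$-fold cyclic branched cover $\Sigma_m(K)$ of $S^3$ along $K$. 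The hypothesis that $m = p^k$ is a prime power enters exactly here: it is what keeps the torsion of $H_1(\Sigma_m(K);\Z)$ under control and makes the signature bookkeeping over $\Sigma_m(K)$ tractable.

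Next I would decompose $H_2(Y;\mathbb{C}) = \bigoplus_{r=0}^{m-1} V_r$ into eigenspaces of $\tau_*$, with $V_r$ the $\zeta^r$-eigenspace for $\zeta = e^{2\pi i/m}$, and let $\sigma_r(Y)$ be the signature of the Hermitian intersection form on $V_r$. The heart of the argument is the identity
\[
\sigma_r(Y) \;=\; \pm\left( \sigma_K(e^{2\pi r i/m}) + \sigma(X) - \frac{2r(m-r)\,[\Sigma]^2}{m^2} \right),\qquad r=1,\dots,m-1,
\]
with the global sign fixed by orientation conventions, which rests on two classical inputs. The interior summands $\sigma(X) - \frac{2r(m-r)}{m^2}[\Sigma]^2$ come from equivariant signature theory (Atiyah--Singer, and Atiyah--Patodi--Singer for the boundary) applied to the $\Z/m$-action on $Y$: the second term is the normal-bundle defect at the fixed surface $\Sigma$. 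The term $\sigma_K(e^{2\pi r i/m})$ is the contribution of $\partial Y = \Sigma_m(K)$, which by Viro's theorem is (a sign times) the Levine--Tristram signature of $K$. Equivalently and more concretely, one caps $Y$ off along $\Sigma_m(K)$ with the Casson--Gordon $4$-manifold obtained by pushing a Seifert surface for $K$ into $B^4$ and taking its $m$-fold branched cover; equivariant Novikov additivity then identifies the boundary term with the signature of $(1-\zeta^r)V + (1-\overline{\zeta^r})V^{T}$ for a Seifert matrix $V$ of $K$, which is $\sigma_K(\zeta^r)$ by definition.

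Finally, since $|\sigma_r(Y)| \le \dim_{\mathbb{C}} V_r$, it suffices to bound $\dim_{\mathbb{C}} V_r \le b_2(X) + 2g$. I would get this from an Euler-characteristic count: because $\chi(\Sigma) = 1-2g$ (the surface $\Sigma$ has a single boundary circle) one finds $\chi(X^\circ \setminus \nu(\Sigma)) = b_2(X) + 2g$, hence $\chi(Y) = m\bigl(b_2(X)+2g\bigr) + (1-2g)$; combined with the standard vanishing $b_1(Y) = b_3(Y) = 0$ and the transfer identification $V_0 \cong H_2(X^\circ;\mathbb{C})$, of dimension $b_2(X)$, this gives $\sum_{r=1}^{m-1}\dim_{\mathbb{C}} V_r = (m-1)\bigl(b_2(X)+2g\bigr)$. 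A $\mathbb{Q}[\Z/m]$-module argument --- for $m$ prime just the observation that $\bigoplus_{r\ge 1} V_r$ is a vector space over $\mathbb{Q}(\zeta_m)$ --- then forces $\dim_{\mathbb{C}} V_r \le b_2(X) + 2g$ for every $r$, and substituting into the displayed identity proves the theorem.

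The skeleton above is short, but essentially all of the work lives in the two equalities of the second step --- the equivariant-signature computation of the interior term and Viro's identification of the boundary term with $\sigma_K$ --- and in pinning the last constant down to exactly $b_2(X)+2g$, which is precisely what forces one to work with $Y$ over $X^\circ$ and with the reduced homology of the branch locus rather than closing everything up. I would treat the $G$-signature machinery as an established black box and refer to \cite[Theorem 3.6]{MMP} and the references therein, in particular \cite{V:G-signature, G:G-signature}, for the full details.
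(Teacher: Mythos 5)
This is a theorem the paper imports as a black box (citing Viro, Gilmer, and the restatement in [MMP, Theorem~3.6]); the paper contains no proof of it, so there is no internal argument to compare yours against. Your sketch is the standard branched-cover/$G$-signature proof that those references carry out: the construction of the $m$-fold cover $Y\to X^\circ$ branched over $\Sigma$, the eigenspace decomposition $H_2(Y;\mathbb{C})=\bigoplus V_r$, the identification of $\sigma_r(Y)$ with $\sigma_K(e^{2\pi i r/m})+\sigma(X)-\tfrac{2r(m-r)}{m^2}[\Sigma]^2$ via the equivariant signature theorem together with Viro's computation of the boundary term, and the bound $|\sigma_r(Y)|\leq\dim_{\mathbb{C}}V_r$ are all the right ingredients, and your Euler-characteristic bookkeeping ($\chi(X^\circ\setminus\nu\Sigma)=b_2(X)+2g$ and $\chi(Y)=m(b_2(X)+2g)+1-2g$) is correct.

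The one step that does not work as written is the last one, and only in the generality the theorem is stated in. From $\dim V_0=b_2(X)$ and the total count $\sum_{r\geq1}\dim V_r=(m-1)(b_2(X)+2g)$, a $\mathbb{Q}[\Z/m]$-module argument pins down each $\dim V_r$ only when $m$ is prime, because only then are all the nontrivial eigenspaces Galois-conjugate and hence of equal dimension. For $m=p^k$ with $k\geq2$ (say $m=4$), $\mathbb{Q}[\Z/m]$ splits as a product of cyclotomic fields $\mathbb{Q}(\zeta_{p^j})$ for $0\leq j\leq k$, so the total count constrains only the sum over each Galois orbit and does not bound, for instance, $\dim V_{m/2}$. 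The correct route --- and the place where the prime-power hypothesis genuinely does its work, rather than in controlling the torsion of $H_1(\Sigma_m(K))$ as you suggest --- is to identify $V_r$ for $r\neq0$ with (a subquotient of) the twisted homology $H_2(X^\circ\setminus\nu\Sigma;\mathbb{C}_{\zeta^r})$ and to prove that this twisted homology vanishes in degrees $0,1,3,4$; the degree-one vanishing is Casson--Gordon's mod-$p$ argument, which requires $\zeta^r$ to have $p$-power order, i.e.\ exactly $m=p^k$. That same vanishing is what justifies the assertion $b_1(Y)=b_3(Y)=0$, which you invoke as ``standard'' without indicating where it comes from. With that ingredient each $\dim V_r$ is bounded by the twisted Euler characteristic $b_2(X)+2g$ and the proof closes; without it the final inequality is unproved whenever $k\geq2$.
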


We recall that the signatures of a satellite of a knot can be computed by the following formula.

\begin{theorem}[{\cite[Theorem 2]{L:signatures}}]
\label{thm:satellite}
    Let $C$ be a knot and $P$ be a pattern with winding number $w$. Then for every root of unity $\zeta$
    \[
    \sigma_{P(C)}(\zeta) = \sigma_{C}(\zeta^w) + \sigma_{P}(\zeta),
    \]
    where $P(C)$ denotes the satellite of $C$ with pattern $P$.
\end{theorem}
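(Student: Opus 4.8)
The plan is to compute both sides from explicit Seifert surfaces. Write $S^3 = E_C \cup_{T^2} ST$, where $E_C = S^3\setminus\nu(C)$, $ST = S^1\times D^2$ is the solid torus carrying the pattern $P$, and the gluing sends the longitude $\lambda$ of $ST$ to the $0$-framed longitude of $C$ and the meridian $\mu$ of $ST$ to the meridian of $C$; then $P(C)$ is the image of $P\subset\operatorname{int}(ST)$. First I would fix a Seifert surface $F$ for the pattern \emph{knot} $P$, viewed inside a standardly embedded copy of $ST$ in $S^3$, and isotope it so that $F\cap\partial ST$ consists of exactly $|w|$ parallel longitudes $\lambda_1,\dots,\lambda_{|w|}$ (possible precisely because $P$ has winding number $w$; trivial circles of intersection are surgered away, and the portion of $F$ outside $ST$ is then $|w|$ meridian discs of the complementary solid torus). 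Writing $F_0 = F\cap ST$, a Seifert surface $\widehat F$ for $P(C)$ is obtained by keeping $F_0$, now sitting inside $\nu(C)$, and capping each $\lambda_i$ — which has become a $0$-framed parallel copy of $C$ — with a Seifert-framed pushoff $F_C^{(i)}$ of a fixed Seifert surface $F_C$ for $C$, the pushoffs taken disjoint and linearly ordered (and oppositely oriented when $w<0$). One checks $\partial\widehat F = P(C)$ and that $\widehat F$ is connected of genus $g(F) + |w|\,g(F_C)$.

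Second, I would pick a basis of $H_1(\widehat F)$ split into a pattern part and a companion part: classes $x_1,\dots,x_{2g(F)}$ supported in $\operatorname{int}(F_0)$, null-homologous in $ST$, mapping to a basis of $H_1(F_0)/\langle[\lambda_i]\rangle\cong H_1(F)$; and classes $e^{(i)}_1,\dots,e^{(i)}_{2g(F_C)}$ from the $i$-th pushoff $F_C^{(i)}$. With respect to such a basis the Seifert matrix $V$ of $\widehat F$ has block form $V = \left(\begin{smallmatrix} V_P & B\\ B' & W\end{smallmatrix}\right)$, and I would argue three things. (a) $V_P = (\lk(x_i^+,x_j))$ is a Seifert matrix of $P$: since the $x_i$ are null-homologous in $ST$, the linking numbers entering $V_P$ are computed inside $ST$ (via a Seifert surface lying in $ST$) and are unchanged when $ST$ is re-embedded, knottedly, as $\nu(C)$. (b) $W = (\lk((e^{(i)}_k)^+,e^{(j)}_l))$ is exactly the Seifert matrix of $|w|$ disjoint, stacked parallel copies of $F_C$: its diagonal blocks are $V_C$ and its off-diagonal $(i,j)$-blocks are $V_C$ or $V_C^{\,T}$ according to the sign of $i-j$, i.e.\ $W$ is a Seifert matrix for the $(w,1)$-cable of $C$. (c) The cross blocks $B,B'$ involve only linking numbers between the $x_i$ and the $e^{(j)}_l$, again computed inside $\nu(C)$, and after a congruence adjusting the $x_i$ near the capping curves they can be arranged to vanish. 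Hence $V$ is congruent to $V_P\oplus W$.

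Given this splitting, the formula follows from a signature count. By (a), $\operatorname{sign}\bigl((1-\zeta)V_P+(1-\bar\zeta)V_P^{\,T}\bigr)=\sigma_P(\zeta)$. By (b), the Hermitian form $(1-\zeta)W+(1-\bar\zeta)W^{\,T}$ built from the $|w|$ stacked copies — whose block structure is that of a companion-type construction realizing the substitution $t\mapsto t^w$ — is, after a suitable change of basis, congruent to $(1-\eta)V_C+(1-\bar\eta)V_C^{\,T}$ with $\eta=\zeta^w$; its signature is therefore $\sigma_C(\zeta^w)$. Summing the two contributions gives $\sigma_{P(C)}(\zeta)=\sigma_P(\zeta)+\sigma_C(\zeta^w)$ for every $\zeta\in S^1$, in particular every root of unity (at roots of the Alexander polynomial one uses the usual averaged convention, the signature being locally constant elsewhere).

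The main obstacle is the middle step: pinning down the block structure of $V$, and especially the companion block $W$. The delicate bookkeeping is the orientation and ordering of the $|w|$ stacked copies of $F_C$ (and the case $w<0$), verifying that the capping data contribute only congruence-trivial cross terms $B,B'$, and checking that re-embedding $ST$ as $\nu(C)$ leaves every linking number entering $V_P$ and $W$ unchanged. Once this is in place the signature computation is elementary linear algebra, and the emergence of $\zeta^w$ can be seen either from the change of basis diagonalising the form of $W$ or, conceptually, from the fact that in the $q$-fold cyclic branched cover of $S^3$ along $P(C)$ the meridian of $C$ has monodromy $w\in\Z/q$, so the companion's exterior is covered according to the character $\zeta\mapsto\zeta^w$; the argument can alternatively be recast via the $G$-signature theorem applied to the branched cover of $B^4$ along a pushed-in copy of $\widehat F$.
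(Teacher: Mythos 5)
The paper does not prove this statement: it is quoted verbatim from Litherland (\emph{Signatures of iterated torus knots}, Theorem 2) and used as a black box, so there is no in-paper proof to compare against. Judged on its own terms, your proposal reconstructs the standard Seifert-form proof of Litherland's formula and is correct in outline: the surface $\widehat F = F_0 \cup \bigcup_i F_C^{(i)}$, the Mayer--Vietoris splitting $H_1(\widehat F)\cong H_1(F)\oplus\bigoplus_i H_1(F_C^{(i)})$, and the block decomposition of the Seifert form are all right, and your block $W$ (diagonal blocks $V_C$, strictly lower blocks $V_C$, strictly upper blocks $V_C^T$, for copies stacked in the positive normal direction) is the correct one --- I checked on examples that $\operatorname{sign}\bigl((1-\zeta)W+(1-\bar\zeta)W^T\bigr)=\sigma_C(\zeta^w)$, including at $\zeta=-1$, $w=2$ where the answer is $\sigma_C(1)=0$ rather than $\sigma_C(-1)$, so getting the transposes in the right triangles genuinely matters.

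Two points deserve correction or completion. First, the cross blocks: once each $x_j$ is represented by a curve null-homologous in $ST=\nu(C)$, it bounds a $2$-chain \emph{inside} $\nu(C)$, which is disjoint from every $e^{(i)}_k\subset E_C$ and from its pushoffs; hence $B=B'=0$ identically and no congruence is needed. (Your stated reason --- that these linking numbers are ``computed inside $\nu(C)$'' --- is off, since the $e^{(i)}_k$ do not lie in $\nu(C)$; the correct mechanism is the one just given.) Second, and more seriously, your key claim (b) cannot hold as literally stated: $(1-\zeta)W+(1-\bar\zeta)W^T$ has rank $2|w|\,g(F_C)$ and so is not congruent to the $2g(F_C)\times 2g(F_C)$ matrix $(1-\zeta^w)V_C+(1-\bar\zeta^w)V_C^T$; the correct assertion is that it is congruent to that matrix direct-summed with a form of signature zero, and producing that block-diagonalization (and verifying the complementary summand has signature zero even at parameters where it degenerates) is the actual analytic content of the theorem --- it is exactly the $(w,1)$-cable case of the formula you are proving, so it cannot simply be quoted. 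Your proposal correctly identifies this as the crux but defers it; as written the argument is therefore an accurate and well-organized reduction of Litherland's theorem to that one linear-algebra lemma, not yet a complete proof. The averaging convention at roots of $\Delta_C(t^w)\Delta_P(t)$ is handled correctly by taking one-sided limits on both sides.
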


\subsection{Arf invariant}

Another obstruction comes from the Arf invariant. In Theorem \ref{thm:Arf} below, $\ks(X)$ denotes the Kirby-Siebenmann invariant of a topological, closed 4-manifolds, and $\Arf(X, \Sigma)$ denotes the Arf invariant of a particular quadratic enhancement of the intersection form on $H_1(\Sigma;\Z/2\Z)$, which comes from viewing $\Sigma$ as sitting in $X$ (see \cite{FK:Rochlin} for details).

The stated form of Theorem \ref{thm:Arf} is the one from \cite[Theorem 3.1]{MMP}. As explained there, the smooth version of it is found in the literature as \cite[p.\ 69, Corollary 6]{K:topology} and \cite[Theorem 2.2]{Y:CL}, and the topological version can be deduced from it using a formula for the Kirby-Siebenmann invariant in the closed case (such a formula can be found e.g.\ in \cite[p.\ 502]{S:wild}). Theorem \ref{thm:Arf} can also be recovered as a special case of \cite[Theorem 4]{Klug}.

For our purposes we need only a simpler version of Theorem \ref{thm:Arf}, because $\ks(X) \equiv 0$ if $X$ admits a smooth structure, and $\Arf(X, \Sigma) \equiv 0$ if $\Sigma$ is a disc.

\begin{theorem}
\label{thm:Arf}
Let $X$ be a topological, closed, connected, oriented 4-manifold.
If $\Sigma \subset X^\circ$ is a properly embedded, locally flat characteristic surface with boundary a knot $K$, then
\[
\frac{\sigma(X) - [\Sigma]^2}8 \equiv \Arf(K) + \Arf(X, \Sigma) + \ks(X) \pmod2
\]
\end{theorem}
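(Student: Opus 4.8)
The plan is to reduce this relative statement to the closed Guillou--Marin / Freedman--Kirby formula by capping off the boundary knot with a pushed-in Seifert surface. Choose a Seifert surface $F \subset S^3 = \partial B^4$ for $K$ and push its interior into a collar $S^3 \times [0,1] \subset B^4$ to obtain a smoothly (hence locally flatly) embedded $F' \subset B^4$ with $\partial F' = K$ and with $\Int F'$ disjoint from $\partial B^4$. Gluing along $K$ yields a closed, locally flat surface $\hat\Sigma := \Sigma \cup_K F' \subset X$. Since $F'$ is null-homologous in $B^4$, we get $[\hat\Sigma] = [\Sigma] \in H_2(X;\Z)$, so $\hat\Sigma$ is again characteristic (being characteristic in $X^\circ$ is equivalent to being characteristic in $X$ via the isomorphism $H_2(X^\circ,\partial X^\circ;\Z) \cong H_2(X;\Z)$), and $[\hat\Sigma]^2 = [\Sigma]^2$.

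Applying the closed version of the formula (in the locally flat topological category, with the Kirby--Siebenmann correction) to $\hat\Sigma$ gives
\[
\frac{\sigma(X) - [\hat\Sigma]^2}{8} \equiv \Arf(X, \hat\Sigma) + \ks(X) \pmod 2,
\]
the left-hand side being an integer because $[\hat\Sigma]$ is characteristic, so that $\sigma(X) \equiv [\hat\Sigma]^2 \pmod 8$. It then remains to show $\Arf(X,\hat\Sigma) = \Arf(X,\Sigma) + \Arf(K)$. With $F'$ arranged as above, $H_1(\hat\Sigma;\Z/2\Z) \cong H_1(\Sigma;\Z/2\Z) \oplus H_1(F';\Z/2\Z)$ as $\Z/2\Z$-symplectic spaces, the two summands being carried by curves on opposite sides of the separating circle $K$ and hence orthogonal for the intersection form on $\hat\Sigma$. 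The Guillou--Marin quadratic refinement on $H_1(\hat\Sigma;\Z/2\Z)$ restricts on the first summand to the refinement defining $\Arf(X,\Sigma)$, and on the second summand---where every bounding membrane and every framing comparison can be taken inside the collar in $B^4$---it agrees with the $\Z/2\Z$-reduction of the Seifert form of $F$, whose Arf invariant is by definition $\Arf(K)$. Additivity of the Arf invariant over an orthogonal direct sum of quadratic forms then yields the claim, and combining with the displayed congruence completes the proof.

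The main obstacle is the last step: unwinding the definition of the Guillou--Marin enhancement (which records, for an embedded circle $\gamma \subset \hat\Sigma$ that bounds in $X$, the mod-$2$ discrepancy between the framing of $\gamma$ induced by a bounding membrane and the one induced by $\hat\Sigma$) and verifying that for curves carried by the pushed-in Seifert surface this quantity localises to $B^4$ and reduces to the Seifert self-linking. One must also check that the orthogonality of the two $H_1$ blocks holds at the quadratic---not merely bilinear---level (the cross terms vanish because the supporting curves are disjoint in $\hat\Sigma$ and can be isotoped apart), and that the product-collar pushoff contributes nothing to the Kirby--Siebenmann term. In practice one does not reprove the closed formula: the cleanest route, and the one taken in the literature (\cite{K:topology, Y:CL} together with the Kirby--Siebenmann formula, e.g.\ \cite{S:wild}), is to cite it and carry out only the capping-off and Arf-additivity bookkeeping outlined above.
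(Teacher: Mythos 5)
The paper does not prove this statement at all: it is quoted verbatim from \cite[Theorem 3.1]{MMP}, with the smooth closed version attributed to \cite{K:topology, Y:CL} and the passage to the topological category handled by a Kirby--Siebenmann formula. Your capping-off argument --- glue a pushed-in Seifert surface to $\Sigma$, apply the closed Guillou--Marin/Freedman--Kirby congruence to the resulting characteristic surface $\hat\Sigma$, and check that the quadratic enhancement splits orthogonally with the Seifert-surface block contributing $\Arf(K)$ --- is exactly the standard derivation of the relative formula from the closed one, and I see no gap in it beyond the bookkeeping you yourself flag (which is carried out in the cited sources). So your proposal is correct and consistent with the route the paper points to via its references; it simply supplies the details the paper chose to outsource.
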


\section{Non-slice links in 4-manifolds}

It is well known (cf.\ \cite{N:slice, S:slice}) that every knot is smoothly slice in $S^2 \times S^2$ and in $\CP2 \# \bCP2$. A simple Kirby calculus argument generalises this result.

{\renewcommand{\thetheorem}{\ref{prop:allslice1}}
\begin{proposition}
Let $L \subset S^3$ be an $n$-component link, and let $L_s \subset S^3$ be a sublink of $L$ of $m$ components such that $L_s$ is strongly slice in $B^4$. Then, for all $n_1, n_2 \in \N$ such that $n_1 + n_2 = n-m$, $L$ is smoothly slice in $(\#^{n_1}(S^2 \times S^2)) \# (\#^{n_2}(\CP2 \# \bCP2))$.
\end{proposition}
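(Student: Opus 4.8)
The plan is to apply the Norman--Suzuki trick to each of the $n_1+n_2=n-m$ components of $L$ not contained in $L_s$, spending one summand on each. Write $L=L_s\sqcup K_1\sqcup\dots\sqcup K_r$ with $r=n-m$, and fix a ball $B^4\subset Z^\circ$, where $Z=(\#^{n_1}(S^2\times S^2))\#(\#^{n_2}(\CP2\#\bCP2))$. Since $L_s$ is strongly slice in $B^4$, it bounds $m$ disjoint smoothly embedded discs $\Delta_1,\dots,\Delta_m$ there, and each $K_j$ bounds an immersed disc $D_j$ in $B^4$ with finitely many transverse double points, as every knot does. After a small perturbation the collection $\{\Delta_i\}\cup\{D_j\}$ is in general position, so its only singularities are transverse double points of three sorts --- self-intersections of a single $D_j$, intersections $D_j\cap D_{j'}$ for $j\ne j'$, and intersections $D_j\cap\Delta_i$ --- and, since the boundary link $L\subset S^3$ has disjoint components, none of these double points lies on a collar of $\partial B^4$. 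The $\Delta_i$ already slice $L_s$, so it suffices to turn the $D_j$ into embedded discs, pairwise disjoint and disjoint from the $\Delta_i$, using the summands.

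The tool is the \emph{Norman trick}: if $\Sigma$ is a surface in a $4$--manifold $W$ with a transverse double point $p$, and $S\subset W$ is an embedded $2$--sphere meeting $\Sigma$ transversally in exactly one point $q\ne p$, then surgering $\Sigma$ along a tube that runs from $p$ along $\Sigma$ to $q$ and then over $S$ produces a surface diffeomorphic to $\Sigma$, with $p$ resolved and no new double points, whose class differs from $[\Sigma]$ by $\pm[S]$ and whose modification is supported near the tube; the same move removes an intersection of $\Sigma$ with another surface disjoint from $S$, modifying only $\Sigma$. What makes this efficient is that each of $(S^2\times S^2)^\circ$ and $(\CP2\#\bCP2)^\circ$ contains an embedded sphere $A$ with $[A]^2=0$ together with an embedded sphere $B$ with $[A]\cdot[B]=1$: take $A=S^2\times\mathrm{pt}$, $B=\mathrm{pt}\times S^2$ in the first case, and $A$ representing $h+e$, $B=h$ in the second, where $h^2=1$, $e^2=-1$, $h\cdot e=0$ generate $H_2$. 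Since $[A]^2=0$, the sphere $A$ admits arbitrarily many pairwise disjoint parallel copies, each still meeting $B$ transversally once. (There is no such pair in $(\CP2)^\circ$, which is consistent with the statement not allowing a bare $\CP2$ summand, just as not every knot is slice in $\CP2$.)

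I would then clean up $D_1,\dots,D_r$ one at a time, using the $j$--th summand for $D_j$; write $A_j,B_j$ for the corresponding spheres, which lie in the interior of that summand and so are disjoint from $B^4$ and from the other summands. First tube $D_j$ to a parallel copy of $A_j$ along an arc routed to avoid every other disc --- possible by general position, since an arc and a surface are generically disjoint in a $4$--manifold; this yields a disc $D_j^{(0)}$ with $\partial D_j^{(0)}=K_j$, the same double points as $D_j$, class $[A_j]$, and such that $B_j$ meets it transversally once. Now apply the Norman trick repeatedly to the double points of $D_j^{(0)}$: use $B_j$ for the first, and a fresh parallel copy of $A_j$ for each subsequent one. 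At every stage the sphere being used meets the current disc in exactly one point --- its single intersection with the surviving ``$B_j$ part'' of the disc --- and each Norman tube can be routed, by general position on the boundary of a thin tubular neighbourhood of the current disc, so as to meet no $\Delta_i$ and no $D_{j'}$ with $j'\ne j$. Once all the double points of $D_j$ are resolved, $D_j$ has become an embedded disc bounding $K_j$, disjoint from all the other discs; and since this process modified only $D_j$, it neither disturbed the discs cleaned earlier nor created intersections among the remaining ones. Iterating over $j=1,\dots,r$ produces $n$ disjoint smoothly embedded discs in $Z^\circ$ with boundary $L$.

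The main obstacle is the bookkeeping in the last step: keeping the general-position routing of all the tubes and Norman annuli honest, and --- the crucial point for the stated count --- arranging that a \emph{single} copy of $B_j$ survives as the spine hit exactly once by each successive parallel copy of $A_j$, so that exactly one summand is consumed per component rather than one per double point. The geometric ingredients (the Norman trick and the sphere pair $(A,B)$) are standard; everything else is the Norman--Suzuki argument for a single knot, carried out simultaneously over the components of $L\setminus L_s$. One can also phrase the same construction purely in Kirby-diagrammatic terms, sliding each $K_j$ over the two $2$--handles of a small $0$--framed Hopf link (or $\pm1$--framed unlink) linking it.
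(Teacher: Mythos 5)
Your overall strategy (one Norman--Suzuki summand per component of $L\setminus L_s$, with a square-zero sphere and a dual sphere in each summand) is viable and is really the geometric counterpart of the paper's proof, which runs the same construction in Kirby-calculus language: the $0$-framed meridians added to each component of $L_c$ are exactly your dual spheres, and the handle slides are the Norman tubes. However, there is a genuine error in your statement of the Norman trick, and it breaks the bookkeeping your induction relies on. To remove a point $p\in\Sigma\cap F'$ one needs a sphere $S$ meeting \emph{one} of the two surfaces in a single point, and the surface that gets modified is the \emph{other} one: the small disc excised at $p$ is the normal disc of (say) $\Sigma$ lying \emph{in} $F'$, and it is replaced by the normal tube of $\Sigma$ over an arc running in $\Sigma$ to $S\cap\Sigma$, capped by a punctured copy of $S$. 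So a sphere dual to $D_j$ lets you remove a point of $D_j\cap\Delta_i$ only by modifying $\Delta_i$, not $D_j$. Your parenthetical claim that the move ``removes an intersection of $\Sigma$ with another surface disjoint from $S$, modifying only $\Sigma$'' is backwards, and the plan of leaving the $\Delta_i$ untouched is not merely hard to implement but impossible in general: since $\Delta_i\subset B^4$ has $[\Delta_i]=0$, Equation \eqref{eq:lkformula} forces the algebraic intersection number of $D_j$ with $\Delta_i$ to equal $\pm\lk(K_j,L_{s,i})$ no matter how $D_j$ is modified, so whenever some $\lk(K_j,L_{s,i})\neq 0$ no choice of $D_j$ is disjoint from the original $\Delta_i$. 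The same issue affects your claim that cleaning $D_j$ ``neither disturbed the discs cleaned earlier nor created intersections among the remaining ones.''

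The gap is repairable without changing the summand count: remove each point of $D_j\cap\Delta_i$ by tubing $\Delta_i$ along $D_j$ into a fresh disjoint fibre of the $j$-th summand (each such fibre meets the current $D_j$ exactly once after $D_j$ has been summed with the section), and similarly remove points of $D_j\cap D_{j'}$ by modifying one of the two discs using a fibre from the other's summand; the $\Delta_i$ remain embedded discs with boundary $L_s$, merely in new homology classes. With that correction your argument goes through. For comparison, the paper avoids all of this surface-level bookkeeping by working entirely diagrammatically: it attaches $2$-handles along $m(L)$ (with framings $0$ and $1$ distributed according to $n_1,n_2$) together with a $0$-framed meridian for each component of $L_c$, so that the cores of the $m(L)$-handles are the slice discs by construction, and then identifies the resulting manifold by sliding $m(L_c)$ over the meridians --- which simultaneously undoes self-crossings, crossings between different components of $L_c$, and crossings with $m(L_s)$ --- leaving the $0$-trace of $L_s$ connect-summed with $(\#^{n_1}(S^2\times S^2))\#(\#^{n_2}(\CP2\#\bCP2))$, which is capped off using strong sliceness of $L_s$ in $B^4$.
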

\addtocounter{theorem}{-1}
}

\begin{proof} 
Define $L_c := L\setminus L_s$. 
Consider the Kirby diagram for a 4-manifold $X$ given by $m(L)$, the mirror of $L$, where $n_1$ components of $L_c$, and all of the components of $L_s$ are $0$-framed and the other $n_2$ components of $L_c$ are $1$-framed, together with a 0-framed meridian added to each link component of $L_c$.
By construction $L$ is strongly slice in $X$. To identify $X$, we unlink $m(L_c)$ by sliding over the 0-framed meridians whenever necessary, and the final Kirby diagram will be for the 0-trace of $L_s$ connect sum the manifold $(\#^{n_1}(S^2 \times S^2)) \# (\#^{n_2}(\CP2 \# \bCP2))$. Because $L_s$ is strongly slice in $B^4$, we can complete the Kirby diagram to a diagram of $(\#^{n_1}(S^2 \times S^2)) \# (\#^{n_2}(\CP2 \# \bCP2))$ using the complement of the 0-trace.
\end{proof}

The above result shows that there exist non-compact 4-manifolds such that every link is slice therein (for example, one can take $\R^4 \#^\infty (S^2 \times S^2)$). However, Yasuhara showed that for every \emph{compact} 4-manifold $X$ there exists a link that is not even null-homologous in $X$, let alone slice. Following Yasuhara, we say that a link $L$ is called \emph{null-homologous} in $X$ if the components of $L$ bound pairwise disjoint surfaces in $X \setminus B^4$. This definition can be given in the smooth or in the topological category; Yasuhara's argument is topological, so the next theorem applies to both cases.

\begin{theorem}[{\cite{Y:LinkHomology}}]
\label{thm:Yasuhara-null-homologous-links}
An $\ell$-component link $L \subset S^3$ is null-homologous in an oriented, compact 4-manifold $X$ if and only if there exist $\ell$ homology classes that pair with each other according to the matrix of linking numbers $\lk(L_i, L_j)$.

In particular, for every oriented, compact 4-manifold $X$, there exists a $(b_2(X)+2)$-component link in $S^3$ that is homologically essential in $X$.
\end{theorem}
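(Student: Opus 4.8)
The plan is to prove the equivalence by an intersection‑theoretic argument in both directions, and then extract the last sentence as a (linear‑ and lattice‑) algebraic consequence. Throughout I will work with the identifications $H_2(X;\Z) \cong H_2(X^\circ;\Z) \cong H_2(X^\circ,\partial X^\circ;\Z)$, which hold because $\partial X^\circ = S^3$ has trivial $H_1$ and $H_2$, and under which the intersection pairing becomes the intersection form $Q_X$ of $X$. Two preliminary observations will be used freely: each component $L_i$ is already null‑homologous in $S^3$, hence in $X^\circ$, so it bounds some properly embedded surface there; and by tubing in closed surfaces (resp.\ tubing components together) one may realize an arbitrary prescribed class in $H_2(X^\circ,\partial X^\circ)$ (resp.\ assume connectedness) for a surface bounding $L_i$.

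For necessity, assume the $L_i$ bound pairwise disjoint properly embedded surfaces $\Sigma_i \subset X^\circ$, and set $a_i := [\Sigma_i]$. Capping $\Sigma_i$ with a properly embedded surface $G_i \subset B^4$ bounding $L_i$ (for instance a pushed‑in Seifert surface) produces a closed surface $\widehat\Sigma_i \subset X = X^\circ \cup B^4$ with $[\widehat\Sigma_i] = a_i$, so $a_i \cdot a_j = \widehat\Sigma_i \cdot \widehat\Sigma_j$. Expanding this: $\Sigma_i \cap \Sigma_j = \varnothing$ by hypothesis; the mixed terms $\Sigma_i\cap G_j$ and $G_i \cap \Sigma_j$ are empty, since $\Sigma_i\subset X^\circ$ and $G_j\subset B^4$ can meet only along $S^3$, hence over $L_i\cap L_j = \varnothing$; and $G_i \cap G_j = \lk(L_i,L_j)$, because the algebraic intersection number of two surfaces in $B^4$ bounding disjoint knots is their linking number. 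Hence $a_i \cdot a_j = \lk(L_i,L_j)$.

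For sufficiency, suppose $a_i \in H_2(X)$ satisfy $a_i \cdot a_j = \lk(L_i,L_j)$ for $i\ne j$. First choose connected, properly embedded, pairwise transverse surfaces $\Sigma_i \subset X^\circ$ with $\partial\Sigma_i = L_i$ and $[\Sigma_i] = a_i$; by the same computation as above, the signed count of $\Sigma_i\cap\Sigma_j$ equals $a_i\cdot a_j - \lk(L_i,L_j) = 0$. It then remains to make the surfaces genuinely disjoint. For each pair $i\ne j$ I would cancel the points of $\Sigma_i\cap\Sigma_j$ two at a time: given a pair $p^+,p^-$ of opposite sign, pick an embedded arc $\beta\subset\Sigma_j$ from $p^+$ to $p^-$ that misses every other point of $\Sigma_i\cap\Sigma_j$ and every point of $\Sigma_j\cap\Sigma_k$ for $k\ne j$; then a sufficiently thin tube running along $\beta$ and attached to $\Sigma_i$ near $p^\pm$ meets only $\Sigma_i$, at its two boundary circles, and no other $\Sigma_k$. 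Surgering $\Sigma_i$ along this tube removes the canceling pair, adds a handle to $\Sigma_i$, leaves $[\Sigma_i]$ unchanged, and disturbs nothing else; iterating over all pairs yields pairwise disjoint surfaces bounding $L$. I expect this tube‑routing step --- guaranteeing that the surgery creates no new intersections with third surfaces --- to be the main technical point in the equivalence.

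For the final assertion: by the equivalence, and since every symmetric integer matrix with zero diagonal is the linking matrix of some link, it suffices to exhibit, for each compact oriented $X$, a symmetric integer matrix $M = (\ell_{ij})$ of size $b_2(X)+2$ with zero diagonal that is not the off‑diagonal part of the Gram matrix of any $b_2(X)+2$ vectors in the lattice $(H_2(X)/\mathrm{tors}, Q_X)$. When $b_2(X) = 0$ the lattice is trivial and the Hopf link ($\ell_{12} = 1$) works. In general, $b_2(X)+2$ vectors in a lattice of rank $b_2(X)$ satisfy at least two independent linear relations, and pairing these relations with the vectors converts them into strong constraints relating the $\ell_{ij}$ to one another and to the values, parity, and signature of $Q_X$; one then selects the $\ell_{ij}$ --- via a suitable divisibility or parity condition, or a large enough common value --- so that those constraints cannot be met. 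The delicate point is that this choice genuinely depends on the isometry type of $Q_X$: naive uniform choices such as ``all $\ell_{ij}$ equal to $N$'' or ``all $\ell_{ij}=1$'' fail for $\#^4\CP2$, whose intersection form represents every integer, so the argument must split into a short case analysis, the hardest cases being the definite forms of large rank, where one has to exploit the relation structure among the realizing vectors rather than merely which integers $Q_X$ represents.
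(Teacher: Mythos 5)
This statement is quoted from Yasuhara's paper \cite{Y:LinkHomology}; the present paper cites it without proof, so there is no internal argument to compare against. Judged on its own terms, your proof of the equivalence (the first sentence of the theorem) is correct and is the standard argument: the necessity direction is exactly the computation behind Equation \eqref{eq:lkformula}, and the sufficiency direction is the usual tubing of algebraically cancelling intersection points along arcs chosen to avoid the other surfaces and the remaining intersection points. The details you flag (orientability of the tube when the two points have opposite signs, invariance of the homology class, thinness of the tube) all work out.

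The genuine gap is in the ``in particular'' clause, which is the quantitatively important part (it is where the bound $b_2(X)+2$ comes from, and it is what the paper later uses). You correctly reduce it to exhibiting, for each intersection lattice $(H_2(X)/\mathrm{tors}, Q_X)$ of rank $r=b_2(X)$, a symmetric integer matrix of size $r+2$ whose off-diagonal part is not realized by any $r+2$ vectors, but you never produce the matrix or verify non-realizability: the decisive step is deferred to an unexecuted ``short case analysis.'' Moreover, the mechanism you propose is weaker than you suggest. From $r+2$ vectors one does get, for each index $j$, a nonzero integer vector $c^{(j)}$ with $c^{(j)}_j=0$ and $\sum_{i\neq j} c^{(j)}_i \,\ell_{ij}=0$ (relations with $c_j\neq 0$ are useless because they involve the unconstrained diagonal entries $v_j\cdot v_j$). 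But every nonzero vector in $\Z^{r+1}$ is orthogonal to \emph{some} nonzero integer vector, so these relations obstruct nothing unless one exploits the coupling of the $c^{(j)}$ through the common realizing vectors $v_i$ --- and your sketch does not say how. The difficulty is real: the form $Q_X$ may be degenerate or non-unimodular (for $X$ with boundary), and already in rank $1$ the natural candidates fail (all linking numbers equal to a prime $p$ is realized in $\langle p\rangle$ by $v_1=v_2=v_3=$ generator, whereas the asymmetric choice $(\ell_{12},\ell_{13},\ell_{23})=(1,1,2)$ does work for every rank-one form). A counting argument handles definite forms but breaks down for indefinite ones, where vectors of bounded pairing need not have bounded coordinates. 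As it stands, the existence of the obstructing linking matrix for an arbitrary compact $X$ --- the actual content of Yasuhara's ``in particular'' --- is asserted, not proved.
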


In the case of even, simply connected 4-manifolds, we can improve Yasuhara's bound by 1 if we want to obstruct sliceness as opposed to being null-homologous.

\begin{proposition}
\label{prop:b2+1}
    Let $X$ be a closed, even, simply-connected, topological manifold. Then there is an $(b_2(X)+1)$-component link $L$ which is not topologically slice in $X$.
\end{proposition}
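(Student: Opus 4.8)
The plan is to write down an explicit $(b_2(X)+1)$-component link and obstruct its topological sliceness using only the Arf obstruction, Theorem~\ref{thm:Arf}. Set $n:=b_2(X)$ and take $L=L_0\sqcup(L_1\cup\dots\cup L_n)$, where $L_0$ is a trefoil lying in a ball disjoint from the rest (so $\Arf(L_0)=1$) and $L_1,\dots,L_n$ are $n$ distinct fibres of the Hopf fibration of $S^3$; thus each $L_i$ with $i\ge 1$ is an unknot, $\lk(L_i,L_j)=1$ for $1\le i<j\le n$, and $\lk(L_0,L_i)=0$ for all $i$. Suppose, for contradiction, that $L$ is topologically slice in $X$, so there are pairwise disjoint, properly embedded, locally flat discs $D_0,\dots,D_n\subset X^\circ$ with $\del D_i=L_i$. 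Capping $D_i$ off with a locally flat surface in $B^4$ bounded by $L_i$ produces a well-defined class $a_i\in H_2(X;\Z)\cong H_2(X^\circ,\del X^\circ;\Z)$ (well-defined since $H_2(B^4)=0$), and, the discs being disjoint, the standard identity $a_i\cdot a_j=\lk(L_i,L_j)$ holds for $i\ne j$ — indeed any two locally flat surfaces in $B^4$ bounded by $L_i$ and $L_j$ meet algebraically $\lk(L_i,L_j)$ times.

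The crux of the argument is to deduce that $a_0=0$. Since $X$ is simply connected and even, $w_2(X)=0$, so $X$ is spin and its intersection form $Q_X$ is even and unimodular; in particular $n=b_2(X)$ is even and the mod-$2$ reduction of $Q_X$ is a nondegenerate alternating form on $H_2(X;\Z/2\Z)$. With respect to this form the reductions $\overline{a}_1,\dots,\overline{a}_n$ have Gram matrix $J_n-I_n$ ($J_n$ the all-ones matrix): the diagonal entries vanish because $Q_X$ is even, and the off-diagonal entries equal $\lk(L_i,L_j)=1$. Now $J_n-I_n$ is invertible over $\Z/2\Z$ whenever $n$ is even: if $(J_n-I_n)x=0$ then each $x_i$ equals $s:=\sum_k x_k$, whence $s=ns=0$, so $x=0$. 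Hence $\overline{a}_1,\dots,\overline{a}_n$ span $H_2(X;\Z/2\Z)$, so $a_1,\dots,a_n$ span a finite-index subgroup of $H_2(X;\Z)$; together with $a_0\cdot a_i=\lk(L_0,L_i)=0$ for $1\le i\le n$ and the nondegeneracy of $Q_X$, this forces $a_0=0$.

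To conclude, apply Theorem~\ref{thm:Arf} to the disc $D_0$: its class $[D_0]=a_0=0$ is characteristic because $X$ is spin, and $\Arf(X,D_0)=0$ because $D_0$ is a disc. This gives
\[
\frac{\sigma(X)}{8}=\frac{\sigma(X)-[D_0]^2}{8}\equiv\Arf(L_0)+\Arf(X,D_0)+\ks(X)\equiv\Arf(L_0)+\ks(X)\pmod 2.
\]
Applying the same theorem to an unknot bounding a trivially embedded disc in a ball (again of class $0$, hence characteristic) yields $\sigma(X)/8\equiv\ks(X)\pmod 2$; subtracting, $\Arf(L_0)\equiv 0\pmod 2$, contradicting $\Arf(L_0)=1$. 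Thus $L$ is not topologically slice in $X$. (When $b_2(X)=0$ the manifold $X$ is homeomorphic to $S^4$, the construction degenerates to $L=L_0$, and the argument reduces to the classical fact that the trefoil is not topologically slice in $S^4$.)

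The main obstacle is the rigidity required in the second paragraph: the off-diagonal part of the linking matrix of $L_1\cup\dots\cup L_n$ must be chosen so that, no matter which classes the slice discs happen to represent, those classes are forced to span $H_2(X;\Z/2\Z)$ — and this must hold uniformly for every even unimodular form of rank $n$, with no control over the self-pairings $a_i^2$. The choice ``all pairwise linking numbers $1$'' works precisely because $J_n-I_n$ is nonsingular modulo $2$ when $n$ is even; several natural alternatives (all linking numbers even, or a hyperbolic pattern) fail. One must also keep track of the fact that every ingredient — the capped homology classes and their intersection numbers, the spin structure, and Theorem~\ref{thm:Arf} — is available in the topological, locally flat category, which is what makes the conclusion a statement about topological (rather than merely smooth) sliceness.
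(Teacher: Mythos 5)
Your proof is correct, and it follows the same skeleton as the paper's: realize a spanning set of $H_2(X;\Z)$ by capping off $b_2(X)$ unknotted components, deduce that the disc for the split-off knot lies in the zero class, and then obstruct H-sliceness of that knot. The two implementations differ at both ends, though. For the spanning step, the paper takes the pairwise linking numbers to be the off-diagonal entries of a matrix $M$ representing $Q_X$ and invokes $\det M\equiv 1\pmod 2$, so the link depends on the intersection form; you instead use the universal pattern ``all pairwise linking numbers $1$'' and the observation that $J_n-I_n$ is nonsingular mod $2$ for $n$ even (with evenness of $n=b_2(X)$ supplied by the classification of even unimodular forms) -- a nice uniform choice, at the cost of having to note the rank-parity fact. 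For the final obstruction, the paper picks a knot $K$ with $\sigma_K(-1)>b_2(X)-\sigma(X)$ and applies Theorem~\ref{thm:signature}; you instead use the trefoil and Theorem~\ref{thm:Arf}, exploiting that $X$ is spin so the zero class is characteristic, and cancelling $\ks(X)$ and $\sigma(X)/8$ by a second application to the unknot. Your variant needs no knot tailored to $b_2^-(X)$ and is genuinely specific to the even case (which is exactly the hypothesis here), whereas the signature obstruction would survive in the odd setting. The only blemish is a sign: under the paper's convention (Equation~\eqref{eq:lkformula}) the capped classes pair according to $\lk(m(L_i),m(L_j))=-\lk(L_i,L_j)$, but since your argument only uses these pairings mod $2$ and their vanishing for the split component, nothing is affected.
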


\begin{proof}
Let $M$ denote the matrix representing $Q_X$, and construct a $b_2(X)$-component link $L'$ such that every component is an unknot and its pairwise linking numbers are given by the non-diagonal entries of the matrix $M$.
Let $L = L' \sqcup K$, where $K$ is a knot split from $L'$. If $L$ is strongly slice in $X$, then by capping off all the (unknotted) components of $L'$ with a disc in $B^4$, we get a collection of spheres $S_1, \ldots, S_{b_2(X)}$ embedded in $X$ with intersection pattern mod 2 given by the matrix $M$ (note that we know nothing of the self-intersection numbers, except that they are even).
Since $\det M \equiv 1 \pmod 2$, these spheres must be linearly independent in $H_2(X;\Z)$, and therefore they must rationally span all of it. Thus, $K$ must be $H$-slice, which we can obstruct using the signature: we choose $K$ with $\sigma_K(-1) > 2b_2^-(X) = b_2(X) - \sigma(X)$, and Theorem \ref{thm:signature} shows that $K$ cannot be H-slice in $X$.
\end{proof}

\begin{remark}
We remark that Proposition \ref{prop:b2+1} does not hold if you replace sliceness with null-homologousness. In particular, all 3-component links bound pairwise disjoint surfaces in $S^2 \times S^2$ (which is spin) and $\CP2 \# \bCP2$. Using Theorem \ref{thm:Yasuhara-null-homologous-links}, if we denote the triple of linking numbers by $(a,b,c)$, then we just need to find 3 homology classes in the $S^2 \times S^2$ and $\CP2 \# \bCP2$ which pairwise pair to give $a$, $b$, and $c$ respectively. These classes can be chosen to be the columns of the following matrices, where $a'= \frac{a}{(a,b)}$, $b'= \frac{b}{(a,b)}$, and $sb'+ra'=1$:
\[
\left(
\begin{matrix}
    (a,b) & sc & rc \\
    0 & a' & b' \\
\end{matrix}
\right)
\qquad \qquad
\left(
\begin{matrix}
    1 & a & b \\
    0 & 1 & ab-c \\
\end{matrix}
\right)
\]
\end{remark}

\section{A 2-component link not topologically slice in \texorpdfstring{$S^2 \times S^2$}{S2 x S2}}
\label{sec:S2xS2}

Miyazaki and Yasuhara provided an example of a 2-component link that is not topologically slice in $S^2 \times S^2$. We give an overview of their argument here.

\begin{theorem}[{\cite{MY:generalized}}]
    The 2-component link in Figure \ref{fig:Miyazaki-Yasuhara} on the right is not smoothly slice in $S^2 \times S^2$.
\end{theorem}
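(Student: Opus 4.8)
The plan is to argue by contradiction, keeping track simultaneously of a homology class and a package of classical concordance invariants. Suppose $L = L_1 \cup L_2$ bounds disjoint smooth discs $D_1, D_2 \subset (S^2\times S^2)^\circ$, and set $\alpha_i = [D_i] \in H_2((S^2\times S^2)^\circ, \partial;\Z) \cong H_2(S^2\times S^2;\Z)$, which carries the hyperbolic form; in a standard basis $s,t$ (with $s^2 = t^2 = 0$, $s\cdot t = 1$) write $\alpha_i = (a_i, b_i)$. Because the discs are disjoint, a linking-number computation (or Theorem \ref{thm:Yasuhara-null-homologous-links}) yields the single constraint $a_1 b_2 + a_2 b_1 = \lk(L_1, L_2)$. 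The example is arranged so that $\lk(L_1, L_2) = 0$ and so that $L_1$, $L_2$ are copies of a fixed knot $K$ with $\Arf(K) = 1$ and with large, carefully prescribed Levine--Tristram signatures, in such a way that the knots obtained by banding the two components together are controlled in terms of $K$.

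First one disposes of the degenerate configurations: if $\alpha_i = 0$ for some $i$, then $D_i$ is a locally flat disc whose class is trivial, hence characteristic (the form of $S^2\times S^2$ being even), so Theorem \ref{thm:Arf} — together with $\sigma(S^2\times S^2) = \ks(S^2\times S^2) = 0$ and $\Arf(S^2\times S^2, D_i) = 0$ — forces $\Arf(L_i) = 0$, contradicting the choice of $K$. Hence $\alpha_1, \alpha_2 \neq 0$, and a short $\gcd$ argument using $a_1 b_2 + a_2 b_1 = 0$ shows that, up to relabeling and the isometries of the hyperbolic form, $\alpha_1 = g(a,b)$ and $\alpha_2 = \lambda(-a,b)$ with $\gcd(a,b) = 1$, $g \geq 1$, $\lambda \neq 0$; note $\alpha_1^2 = 2g^2 ab$ and $\alpha_2^2 = -2\lambda^2 ab$.

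The remaining cases are treated with the signature obstruction in two guises. Applied to the genus-$0$ disc $D_i$ itself, Theorem \ref{thm:signature} gives, whenever $\alpha_i$ is divisible by a prime power $m$, that $\bigl|\sigma_{L_i}(e^{2\pi r i/m}) - 2r(m-r)\alpha_i^2/m^2\bigr| \le b_2(S^2\times S^2) = 2$ for all $r$; when $g$ or $\lambda$ is a prime power this pins a signature of $K$ at a root of unity near a quantity growing with $g^2 ab$ or $\lambda^2 ab$, which the prescribed signatures of $K$ defeat for all but finitely many configurations. The survivors are (nearly) primitive classes — in particular the sphere classes $(\pm 1, n)$, which evade the divisibility hypothesis of Theorem \ref{thm:signature} and are represented by smooth spheres of self-intersection $2n$, so no genus-function input (not even the smooth one) is of any use here. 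These are eliminated by banding $D_1$ to $D_2$: the result is a locally flat disc bounding a knot of class $\alpha_1 + \alpha_2$ and square $\alpha_1^2 + \alpha_2^2$, whose Levine--Tristram signatures are controlled in terms of those of $K$ via Theorem \ref{thm:satellite} (and additivity under connected sum); when $\alpha_1 + \alpha_2$ is divisible by a prime power, Theorem \ref{thm:signature} again produces a contradiction for a single large value $\sigma_K(-1)$ chosen to avoid the finitely many ``resonant'' squares $\frac{1}{2}(\alpha_1 + \alpha_2)^2$.

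The step I expect to be the real obstacle is the last one: prescribing $\Arf(K)$ and the finite list of signature values $\sigma_K(e^{2\pi r i/m})$ invoked above so that every admissible pair $(\alpha_1, \alpha_2)$ is killed by at least one instance of Theorem \ref{thm:signature} or Theorem \ref{thm:Arf}, with the primitive classes — reachable only after passing to a band sum — being the genuinely delicate ones. This arithmetic, and the structural facts that $\sigma(S^2\times S^2) = 0$ (so the signature bounds are centered and tight), that the form is even (so the trivial class is characteristic and the $\Arf$ obstruction applies to it), and that the isometry group of the hyperbolic form is tiny (so the surviving case list is finite), are exactly what make the argument work for $S^2\times S^2$ and resist transplantation to $\CP2\#\bCP2$, whose odd intersection form breaks all three.
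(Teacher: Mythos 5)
Your strategy does not prove the stated theorem, because it is incompatible with the actual link in Figure \ref{fig:Miyazaki-Yasuhara}: that link is constructed so that both components \emph{and} their band sum are figure-eight knots, and $4_1$ has identically vanishing Levine--Tristram signature function (it is amphichiral). An argument resting on components with ``large, carefully prescribed Levine--Tristram signatures'' is therefore an argument about a different link --- essentially the structured links of Section \ref{sec:CP2+bCP2} --- not about this one. The ingredient you are missing is Kinoshita's $\Theta$-graph construction \cite{Kinoshita}: one realises all three constituent knots of an embedded $\Theta$-graph as $4_1$, takes $A$ and $B$ to be two of them, and then the band sum $A\#_b B$ is the \emph{third constituent knot}, hence again $4_1$; in particular it has $\Arf=1$, which a connected sum $K\#K$ with $\Arf(K)=1$ never does. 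This is what lets the whole proof reduce to the single lemma that $4_1$ bounds no locally flat disc $\Delta$ in a characteristic class of $S^2\times S^2$ (Theorem \ref{thm:Arf} forces $|[\Delta]^2|\geq 8$, while Theorem \ref{thm:signature} with $m=2$, $r=1$ forces $|[\Delta]^2|\leq 4$), combined with the mod-$2$ observation that when $\lk(A,B)$ is even at least one of $[D_A]$, $[D_B]$, $[D_A]+[D_B]$ is characteristic. Note also that the paper only arranges $\lk(A,B)$ to be \emph{even}, not zero, so your orthogonality-based $\gcd$ decomposition is not available as stated.

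The gap in your case analysis is concrete. Take $\a_1=\a_2=(1,0)$. Both classes are primitive, so Theorem \ref{thm:signature} does not apply to either disc; both have square zero, so they are represented by spheres and no genus function helps; neither is characteristic, so Theorem \ref{thm:Arf} says nothing about the individual discs. The sum $(2,0)$ is characteristic with square $0$, so the resulting signature bound for the band-sum knot $J$ is $|\sigma_J(-1)|\leq 2$, which is vacuous when the components have small signature; the \emph{only} surviving obstruction is $\Arf(J)=1$ fed into Theorem \ref{thm:Arf}. Your proposal controls $J$ only through the satellite and connected-sum formulas, which would give $\Arf(J)\equiv \Arf(K)+\Arf(K)\equiv 0$ up to a band-twist correction you do not control; for the Miyazaki--Yasuhara link the identification $J\simeq 4_1$ comes from the $\Theta$-graph, not from any such formula. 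Your proposed escape --- choosing $\sigma_K(-1)$ large --- is unavailable for this link, and even for a link of your own design the square-zero, non-divisible, non-characteristic configurations (odd multiples of $(1,0)$ paired with odd multiples of $(1,0)$, and more generally the green- and blue-cell phenomena of Table \ref{tab:lkCP2}) require substantially more input than one signature value; the length of the list \eqref{it:A1}--\eqref{it:A8} in the Appendix is a measure of how much.
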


\begin{figure}
    \centering
    \resizebox{0.7\textwidth}{!}{\includegraphics{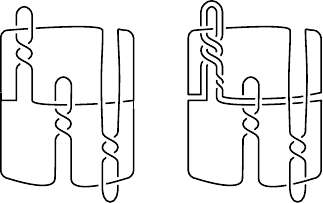}}
    \caption{On the left-hand side there is a $\Theta$ graph where each constituent knot is the figure-eight knot. The right-hand side shows a 2-component link constructed from it which is not topologically slice in $S^2 \times S^2$.}
    \label{fig:Miyazaki-Yasuhara}
\end{figure}

\begin{proof}[{Sketch of the proof from \cite{MY:generalized}}]
As a first step, they show that the figure-eight knot $4_1$ does not bound a disc $\Delta$ in $S^2 \times S^2$ in a \emph{characteristic} homology class: this is because $\Arf(4_1) \equiv 1$ and $\sigma(4_1) = 0$, so one can apply Theorem \ref{thm:Arf} (which says $|[\Delta]^2| \geq 8$) and Theorem \ref{thm:signature} (which says that $|[\Delta]^2| \leq 4$). 

Next, they appeal to a flexible construction of Kinoshita \cite{Kinoshita}, who showed that every triple of knots $(K_1, K_2, K_3)$ can be realised as the three constituent knots of an embedded $\Theta$-graph (i.e., the three knots obtained by gluing two of the three arcs): this construction is in turn based on Suzuki's band presentation of any knot $K$, and illustrated in Figure \ref{fig:Miyazaki-Yasuhara}. Thus, there is an embedded $\Theta$-graph whose constituent knots are all figure-eight knots.
One can then define $L = A \cup B$ as the union of two constituent knots of such a $\Theta$-graph, and, by adding a twist along the common arc if necessary, we can assume that $\lk(A,B)$ is even.
Note that the band sum $A \#_b B$ gives the third constituent knot, hence this is also isotopic to the knot $4_1$.

Finally, suppose that $L$ is slice in $S^2 \times S^2$, with discs $D_A$ and $D_B$; note that there is a boundary connected sum $D_A \natural D_B$ with boundary $A \#_b B$.
With a simple algebraic argument, one can then prove that at least one among $[D_A]$, $[D_B]$, and $[D_A \natural D_B] = [D_A] + [D_B]$ must be characteristic (which is a contradiction).
To show this, write $[D_A]=(a,a')$ and $[D_B]=(b,b')$. The assumption that $\lk(A,B) \equiv 0$ implies that $ab'+ba'\equiv 0$. If both summands are odd, then $a \equiv a' \equiv b \equiv b' \equiv 1$, so $[D_A]+[D_B]$ is characteristic. If instead both summands are even, then $ab'\equiv ba' \equiv 0$, and, assuming that $[D_A]$ and $[D_B]$ are not characteristic, we obtain $a \equiv b \equiv 0$ and $a' \equiv b' \equiv 1$ (or vice versa), and therefore the sum $[D_A]+[D_B]$ is characteristic.
\end{proof}

\begin{remark}
    The argument of Miyazaki-Yasuhara is very specific to $S^2 \times S^2$.
    For example, note that for $(S^2 \times S^2) \# (S^2 \times S^2)$ the argument in the first paragraph of the proof does not work, because $[\Delta^2]= \pm 8$ is not obstructed; the final algebraic consideration would also more complicated, and one would need to conclude that $[D]^2$ is characteristic and $\neq 8$.
    Another case where Miyazaki-Yasuhara's argument is not well suited is $\CP2 \# \bCP2$ (which is the main focus of this paper), because characteristic classes in this case are not even, so the signature obstruction does not apply to them.
\end{remark}

\section{A 2-component link not smoothly slice in \texorpdfstring{$\CP2\#\bCP2$}{a blown-up CP2}}
\label{sec:CP2+bCP2}

Recall that by the Norman-Suzuki trick all knots are slice in $\CP2\#\bCP2$. However, we cannot use the techniques of Miyazaki and Yasuhara, as characteristic classes are not always divisible. Without such a confluence of invariants, we will need to rule out homology classes using a fairly lengthy case analysis.

{
\renewcommand{\thetheorem}{\ref{thm:CP2bCP2}}
\begin{theorem}
    The 2-component link in Figure \ref{fig:CP2bCP2link} is not smoothly slice in $\CP2\#\bCP2$.
\end{theorem}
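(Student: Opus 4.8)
The plan is to follow the template of the $S^2\times S^2$ argument, with a longer case analysis reflecting the fact that $\CP2\#\bCP2$ is less symmetric and — decisively — has characteristic classes that need not be divisible, so that the Arf and signature obstructions no longer conspire as in Miyazaki--Yasuhara. I would fix a family of symmetric $2$-component links $L = A\cup B$ with the structure of Figure \ref{fig:S2xS2structure} — a $(1,1)$-tangle $T_A$ with closure $A$, a $(1,1)$-tangle $T_B$ with closure $B$, and a central box of $n$ positive full twists — and require an ambient isotopy of $L$ exchanging $A$ and $B$. Writing $H_2(\CP2\#\bCP2)\cong\Z^2$ in the diagonal basis ($h^2 = 1$, $\bar h^2 = -1$), if $L$ bounds disjoint slice discs $D_A, D_B\subset(\CP2\#\bCP2)^\circ$ with classes $\alpha = [D_A]$ and $\beta = [D_B]$, then $n = -\lk(A,B) = \alpha\cdot\beta$, and the band-surgery constructions (which hold over an arbitrary smooth connected $4$-manifold) produce slice discs for $A\# B$ in class $\alpha+\beta$, for $A\# B^r\# T_{2,2n\pm1}$ in class $\alpha-\beta$, and for $A\#(B_{(2,-2\beta^2-2n\pm1)})$ in class $\alpha+2\beta$.

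Next I would cut down the possible homology classes. Assume $g_{B^4}(A) = g_{B^4}(B) = 1$, which is forced by Proposition \ref{prop:allslice1}; capping the hypothetical discs with genus-$1$ surfaces in $B^4$ gives closed genus-$\le 1$ surfaces in classes $\alpha,\beta$. Using the minimal genus function of $\CP2\#\bCP2$ — which, unlike its topological analogue, is known and equals the algebraic genus — the classes carrying such a surface are, up to the available symmetries, the ``near-diagonal'' ones $(a_1,a_2)$ with $\bigl|\,|a_1|-|a_2|\,\bigr|\le 1$ (all of genus $0$) together with the finitely many sporadic classes $(\pm2,0),(\pm3,0),(\pm3,\pm1)$ and their coordinate-swaps. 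I would then impose $\lk(A,B) = -29$, i.e.\ $\alpha\cdot\beta = 29$: since $29$ is prime and coprime to $2$ and $3$, the equation $a_1b_1 - a_2b_2 = 29$ kills the sporadic classes $(\pm2,0)$ and $(\pm3,0)$ outright and pins the rest down to finitely many sporadic pairs plus a short list of infinite near-diagonal families. Finally, assuming $\Arf(A) = \Arf(B) = 1$, the simplified form of Theorem \ref{thm:Arf} (with $\sigma(X) = \ks(X) = 0$ and $\Sigma$ a disc, so the two correction terms vanish) eliminates every ``exactly diagonal'' $\alpha = (k,k)$: such a class is characteristic since $\alpha\cdot\beta = 29$ forces $k\mid 29$, and it has square $0$, so the Arf congruence would give $\Arf(A) = 0$.

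To close each remaining case I would apply the Levine--Tristram obstruction (Theorem \ref{thm:signature}) to one of the three auxiliary discs, computing its boundary signature from those of $A$ and $B$ via the satellite formula (Theorem \ref{thm:satellite}); crucially the relevant surface $\Sigma$ is a \emph{disc}, so $b_2(X)+2g = 2$ and the inequality is tight. For the near-diagonal families the auxiliary class $\alpha+\beta$ or $\alpha-\beta$ is either a nonzero multiple of $(\pm1,\pm1)$ or has self-intersection of size $\sim 4n$, and feeding this into Theorem \ref{thm:signature} — or, in the ``diagonal'' subcases, feeding in the fact that the torus-knot summand $T_{2,2n+1}$ contributes a signature of magnitude $2n$ at $\zeta_2$ — contradicts the a priori bound $|\sigma_A(-1)+\sigma_B(-1)|\le 2g_{B^4}(A)+2g_{B^4}(B) = 4$, possibly after further prescribing $\sigma_A(\zeta_2) = \sigma_B(\zeta_2) = 2$; the handful of sporadic pairs are then dispatched one at a time by additionally prescribing $\sigma_A(\zeta_4),\sigma_A(\zeta_8),\dots$ and their $B$-counterparts, exactly as in the corresponding step of the $S^2\times S^2$ argument. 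The net effect is a finite list of conditions on a single knot $K$ — a value of $g_4$, a value of $\Arf$, and prescribed values of $\sigma_K$ at $\zeta_2,\zeta_4,\zeta_8,\dots$ — after which Theorem \ref{thm:CP2bCP2} follows by taking $A = B = K$, with the link of Figure \ref{fig:CP2bCP2link} exhibiting such a choice (the value $n = 29$ being fixed precisely so that every numerical inequality above holds across all cases, and the existence of $K$ being checked with KnotInfo or by a direct construction).

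The step I expect to be the real work is organising the two reductions above so that \emph{no} homology class slips through. The three obstructions have almost disjoint domains of applicability — Theorem \ref{thm:Arf} requires a characteristic surface and only sees $[\Sigma]^2\bmod 16$, Theorem \ref{thm:signature} requires the class to be divisible by a prime power, and the genus bound requires a class of large minimal genus — so, unlike the compact four-case analysis for $S^2\times S^2$, there is a genuine proliferation of near-diagonal classes, and each must be routed through the correct auxiliary knot (one of $A\#B$, $A\#B^r\#T_{2,2n\pm1}$, $A\#(B_{(2,\cdot)})$) and the correct prime power and root of unity. Making the twist number $n$ and the signature profile of $K$ simultaneously defeat all of these — in particular ensuring that in every family at least one auxiliary class is both divisible by a prime power and has self-intersection far from the window allowed by Theorem \ref{thm:signature} — is the delicate bookkeeping that occupies the bulk of the proof.
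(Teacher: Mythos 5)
Your plan coincides with the paper's actual strategy in every structural respect: the same link template \eqref{it:A1}, the genus function of $\CP2\#\bCP2$ to confine $\a,\b$ to near-diagonal classes plus finitely many sporadic ones, parity and mod-3 conditions on $\lk(A,B)$ together with the Arf invariant to kill the diagonal and the classes $(\pm2,0),(0,\pm2),(\pm3,0),(0,\pm3)$, and Levine--Tristram signatures applied to the auxiliary knots of Lemma \ref{lem:sumsandcablings} to finish. The value $n=29$ and the reduction to a single symmetric knot found via KnotInfo are also as in the paper. What you have written is, however, a blueprint rather than a proof: the case analysis you correctly identify as ``the real work'' is exactly where the paper spends its effort (Lemmas \ref{lem:table1}--\ref{lem:5-signatures}), and it is not carried out here.

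One concrete step of your sketch would fail as written. For the remaining sporadic pairs ($\a=(a,a+1)$ with $\b=(3,1)$ or $(-1,-3)$ after symmetry reductions), you propose to ``prescribe $\sigma_A(\zeta_4),\sigma_A(\zeta_8),\dots$ exactly as in the $S^2\times S^2$ argument.'' But with $\lk(A,B)$ odd, none of the auxiliary classes $\a+\b$, $\a-\b$, $\a+2\b$ is divisible by a power of $2$ in these cases (e.g.\ $\a+\b=(a+3,a+2)$ has coordinates of opposite parity), so Theorem \ref{thm:signature} at $\zeta_4$ or $\zeta_8$ is simply unavailable. The paper instead arranges $\lk(A,B)\equiv1\pmod 3$ and $\equiv1\pmod 5$ (assumptions \eqref{it:A4e}, \eqref{it:A4f}, subsumed in $\lk=30\ell+1$) precisely so that the cabling classes $\a+2\b$ and $2\a+\b$ become $3$- and $5$-divisible, and then evaluates signatures at $\zeta_3$ and at $\zeta_5,\zeta_5^2$, with the torus-knot correction computed from Equation \eqref{eq:signaturesT2q}. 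Relatedly, your description of the remaining near-diagonal families (``self-intersection of size $\sim 4n$'') is not how they are excluded: for the anti-diagonal (green) families the signature bound forces $(\a+\b)^2=4(a+\h)b$ to be \emph{small}, which in turn forces $\lk(A,B)\in\{\pm1,\pm3,\pm5,\pm7,\pm11\}$, contradicting $\lk=-29$. So the skeleton is right, but the specific roots of unity, divisibilities, and congruences on the linking number --- the reason the twist box contains $29=30\cdot(-1)+1$ mirrored --- still need to be supplied.
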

\addtocounter{theorem}{-1}
}

For the rest of this section, let $X:=\CP2 \# \bCP2$ and $X^\circ := X \setminus \Int(B^4)$.
Let $A$ and $B$ be the two link components, and suppose that $L$ bounds two disjoint smooth discs $D_A, D_B \subset X^\circ$, so that $\del D_A = A$ and $\del D_B = B$, and let $\a := [D_A]$ and $\b := [D_B]$ denote the homology classes of such discs in $H_2(X^\circ, S^3) \cong H_2(X)$.
The idea of the proof is to combine various obstructive methods to rule out all the possible pairs $(\a,\b)$, hence showing that the link cannot be slice.
To do so, we will progressively add assumptions on $L$ until we eventually eliminate all pairs $(\a,\b)$. All the assumptions are collected together in the Appendix at page \pageref{appendix:CP2bCP2}.

\begin{figure}
\begingroup%
  \makeatletter%
  \providecommand\color[2][]{%
    \errmessage{(Inkscape) Color is used for the text in Inkscape, but the package 'color.sty' is not loaded}%
    \renewcommand\color[2][]{}%
  }%
  \providecommand\transparent[1]{%
    \errmessage{(Inkscape) Transparency is used (non-zero) for the text in Inkscape, but the package 'transparent.sty' is not loaded}%
    \renewcommand\transparent[1]{}%
  }%
  \providecommand\rotatebox[2]{#2}%
  \newcommand*\fsize{\dimexpr\f@size pt\relax}%
  \newcommand*\lineheight[1]{\fontsize{\fsize}{#1\fsize}\selectfont}%
  \ifx\svgwidth\undefined%
    \setlength{\unitlength}{175.70183275bp}%
    \ifx\svgscale\undefined%
      \relax%
    \else%
      \setlength{\unitlength}{\unitlength * \real{\svgscale}}%
    \fi%
  \else%
    \setlength{\unitlength}{\svgwidth}%
  \fi%
  \global\let\svgwidth\undefined%
  \global\let\svgscale\undefined%
  \makeatother%
  \begin{picture}(1,0.54625635)%
    \lineheight{1}%
    \setlength\tabcolsep{0pt}%
    \put(0,0){\includegraphics[width=\unitlength,page=1]{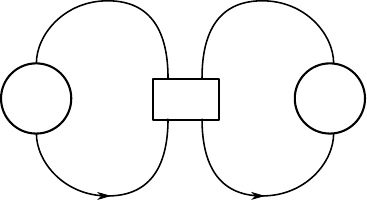}}%
    \put(0.07058651,0.26130829){\color[rgb]{0,0,0}\makebox(0,0)[lt]{\lineheight{1.25}\smash{\begin{tabular}[t]{l}\small $T_A$\end{tabular}}}}%
    \put(0.88102587,0.26130834){\color[rgb]{0,0,0}\makebox(0,0)[lt]{\lineheight{1.25}\smash{\begin{tabular}[t]{l}\small $T_B$\end{tabular}}}}%
    \put(0.49196079,0.26130829){\color[rgb]{0,0,0}\makebox(0,0)[lt]{\lineheight{1.25}\smash{\begin{tabular}[t]{l}\small $n$\end{tabular}}}}%
  \end{picture}%
\endgroup%

    \caption{The structure of the link $L$.}
    \label{fig:S2xS2structure}
\end{figure}

\subsection{The structure of the link \texorpdfstring{$L$}{L}}
We make some assumptions on the structure of $L$ to simplify our case analysis and our computations in the later subsections. Specifically, we assume that:
\begin{enumerate}[font=\textbf]
    \item[({\crtcrossreflabel{A1}[it:A1]})]
    $L$ has a diagram as in Figure \ref{fig:S2xS2structure}, where $T_A$ (resp.\ $T_B$) is a $(1,1)$-tangle whose closure is $A$ (resp.\ $B$), and $n \in \Z$ is the number of right-handed full twists added in the region.
\end{enumerate}

We remark that $n = - \lk(L) = \a \cdot \b$. This follows from the more general statement that if $\Sigma_A, \Sigma_B \subset X^\circ$ are properly embedded surfaces in homology classes $\a$ and $\b$, and with boundary $A$ and $B$ respectively, the following relation holds
\begin{equation}
\label{eq:lkformula}
    \#(\Sigma_A \pitchfork \Sigma_B) + \lk(m(L)) = \a \cdot \b,
\end{equation}
where $m(L)$ denotes the mirror of $L$.
This is because one can cap off $\Sigma_A$ and $\Sigma_B$ in $X$ using Seifert surfaces $F_A$ and $F_B$ for $m(A)$ and $m(B)$ respectively, slightly pushed into $B^4$. Then the intersection number $\a \cdot \b$ is computed by $\#(\Sigma_A \pitchfork \Sigma_B) + \#(F_A \pitchfork F_B)$, and the second summand is well known to agree with $\lk(m(L))$.

Using this form for the link rather than that of Kinoshita exchanges complete control over $A$, $B$, and $A\#_bB$ for more control on $A\#_b B^r$ as well as various cables of $A$ and $B$:

\begin{lemma}
\label{lem:sumsandcablings}
Suppose that $X$ is a smooth, connected 4-manifold, $L = A \cup B$ is a link satisfying \eqref{it:A1}, and that there are two disjoint smooth discs $D_A, D_B \subset X^\circ$, with $\del D_A = A$ and $\del D_B = B$. If $\a := [D_A]$ and $\b := [D_B]$, then:
\begin{itemize}
    \item the knot $A \# B$ bounds a smooth disc in $X^\circ$ in homology class $\a + \b$;
    \item the knot $A \# B^r \# T_{2, 2n\pm1}$ bounds a smooth disc in $X^\circ$ in homology class $\a - \b$;
    \item the knot $A \# (B_{(2,-2\beta^2-2n\pm1)})$ bounds a smooth disc in $X^\circ$ in homology class $\a + 2\b$.
\end{itemize}
Note that $n=-\lk(A,B)$.
\end{lemma}
In the statement above $K^r$ denotes the reverse of the knot $K$ (i.e.\ $K$ with reversed orientation), not to be confused with the mirror $m(K)$ of $K$, and the knot $K_{(p,q)}$ denotes the $(p,q)$-cable of $K$.

\begin{figure}
    \centering
\begingroup%
  \makeatletter%
  \providecommand\color[2][]{%
    \errmessage{(Inkscape) Color is used for the text in Inkscape, but the package 'color.sty' is not loaded}%
    \renewcommand\color[2][]{}%
  }%
  \providecommand\transparent[1]{%
    \errmessage{(Inkscape) Transparency is used (non-zero) for the text in Inkscape, but the package 'transparent.sty' is not loaded}%
    \renewcommand\transparent[1]{}%
  }%
  \providecommand\rotatebox[2]{#2}%
  \newcommand*\fsize{\dimexpr\f@size pt\relax}%
  \newcommand*\lineheight[1]{\fontsize{\fsize}{#1\fsize}\selectfont}%
  \ifx\svgwidth\undefined%
    \setlength{\unitlength}{145.00660573bp}%
    \ifx\svgscale\undefined%
      \relax%
    \else%
      \setlength{\unitlength}{\unitlength * \real{\svgscale}}%
    \fi%
  \else%
    \setlength{\unitlength}{\svgwidth}%
  \fi%
  \global\let\svgwidth\undefined%
  \global\let\svgscale\undefined%
  \makeatother%
  \begin{picture}(1,0.54120713)%
    \lineheight{1}%
    \setlength\tabcolsep{0pt}%
    \put(0,0){\includegraphics[width=\unitlength,page=1]{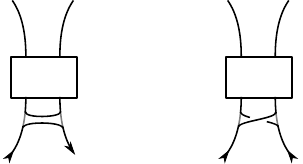}}%
    \put(0.1261099,0.26864146){\color[rgb]{0,0,0}\makebox(0,0)[lt]{\lineheight{1.25}\smash{\begin{tabular}[t]{l}\small $n$\end{tabular}}}}%
    \put(0.83907325,0.26864146){\color[rgb]{0,0,0}\makebox(0,0)[lt]{\lineheight{1.25}\smash{\begin{tabular}[t]{l}\small $n$\end{tabular}}}}%
  \end{picture}%
\endgroup%

    \caption{The figure shows the band surgeries to get $A\#B$ and $A\#B^r \# T_{2,2n\pm1}$, respectively. The sign of the crossing in the band on the right determines whether the torus knot component is $T_{2, 2n-1}$ or $T_{2,2n+1}$.}
    \label{fig:band1}
\end{figure}

\begin{figure}
    \centering
\begingroup%
  \makeatletter%
  \providecommand\color[2][]{%
    \errmessage{(Inkscape) Color is used for the text in Inkscape, but the package 'color.sty' is not loaded}%
    \renewcommand\color[2][]{}%
  }%
  \providecommand\transparent[1]{%
    \errmessage{(Inkscape) Transparency is used (non-zero) for the text in Inkscape, but the package 'transparent.sty' is not loaded}%
    \renewcommand\transparent[1]{}%
  }%
  \providecommand\rotatebox[2]{#2}%
  \newcommand*\fsize{\dimexpr\f@size pt\relax}%
  \newcommand*\lineheight[1]{\fontsize{\fsize}{#1\fsize}\selectfont}%
  \ifx\svgwidth\undefined%
    \setlength{\unitlength}{216.72384624bp}%
    \ifx\svgscale\undefined%
      \relax%
    \else%
      \setlength{\unitlength}{\unitlength * \real{\svgscale}}%
    \fi%
  \else%
    \setlength{\unitlength}{\svgwidth}%
  \fi%
  \global\let\svgwidth\undefined%
  \global\let\svgscale\undefined%
  \makeatother%
  \begin{picture}(1,0.36211339)%
    \lineheight{1}%
    \setlength\tabcolsep{0pt}%
    \put(0,0){\includegraphics[width=\unitlength,page=1]{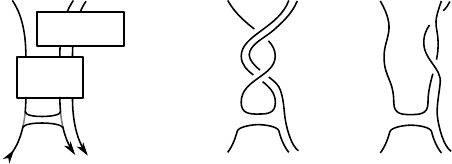}}%
    \put(0.09822061,0.17974381){\color[rgb]{0,0,0}\makebox(0,0)[lt]{\lineheight{1.25}\smash{\begin{tabular}[t]{l}\small $n$\end{tabular}}}}%
    \put(0.09763886,0.28767627){\color[rgb]{0,0,0}\makebox(0,0)[lt]{\lineheight{1.25}\smash{\begin{tabular}[t]{l}\small $-\beta^2 \pm\frac12$\end{tabular}}}}%
    \put(0.72113098,0.17974381){\color[rgb]{0,0,0}\makebox(0,0)[lt]{\lineheight{1.25}\smash{\begin{tabular}[t]{l}$\sim$\end{tabular}}}}%
  \end{picture}%
\endgroup%

    \caption{The figure on the left shows the band sum of $A$ and the cable $B_{2,-2\b^2\pm1}$. 
    (The two parallel copies of $B$ do not wind on each other in the box labelled with $n$.)
    The figure on the right shows that a positive full twist in the box labelled $n$ on the left can be traded for a negative full twist on the cable of $B$.}
    \label{fig:band2}
\end{figure}

\begin{proof}
For the first bullet point, by performing a surgery along a horizontal band below the $n$-labelled box in Figure \ref{fig:S2xS2structure}, then the box can be undone and one obtains $A\#B$.
See the left hand side of Figure \ref{fig:band1}.

If instead one first reverses the orientation of $B$ and then performs a surgery along a band parallel to the $n$-labelled box, but half-twisted, one gets a new knot which is the connected sum of three summands: $A$ appears on the left, $B$ on the right, and $T_{2, 2n\pm1}$ in the middle. (The sign in $\pm1$ depends on the direction of twisting of the band.)
See the right hand side of Figure \ref{fig:band1}.

We now turn to the last bullet point. First, take a parallel copy $D_B'$ of $D_B$, so that the two are disjoint. Then, $\del(D_B \cup D_B') = B_{2,2p}$, an appropriate cable of $B$. Equation \eqref{eq:lkformula} applied to $B_{2,2p}$ implies that $p=-\b^2$. By attaching a half-twisted band to join the two components, we see that $B_{2,-2\b^2\pm1}$ bounds a disc in homology class $2\b$ and supported in a neighbourhood of $D_B$.
Now we consider the original link $L$ and we cable the $B$ component to obtain $B_{2,-2\b^2\pm1}$.
After band summing the components $A$ and $B_{2,-2\b^2\pm1}$ as shown in Figure \ref{fig:band2} on the left, we obtain a knot that bounds a disc in homology class $\a +2\b$. After an isotopy (see Figure \ref{fig:band2} on the right), this knot is identified as $A \# (B_{(2,-2\beta^2-2n\pm1)})$.
\end{proof}



%

\subsection{The genus function on \texorpdfstring{$\CP2\#\bCP2$}{the blown-up CP2}.}

Given a smooth, connected 4-manifold $X$, the 4-ball genus of a knot $K$ gives a first obstruction to the homology classes of $H_2(X^\circ, S^3)$ that are represented by a disc with boundary $K$. 
More precisely, if there is such a disc in homology class $\alpha$, by gluing it to a minimal surface for $K$ in $B^4$ we obtain a closed surface of genus $g_{B^4}(K)$ sitting in a homology class that by abuse of notation we still call $\alpha \in H_2(X) \cong H_2(X^\circ, S^3)$.
Then, knowledge of the genus function on $H_2(X)$ can give obstruction to such an $\alpha$.

Luckily for us, the smooth genus function on $\CP2\#\bCP2$ was determined by Ruberman.

\begin{theorem}[{\cite[Corollary 1.3]{R:minimalgenus}}]
\label{thm:Ruberman2}
    The minimal genus of a smoothly embedded orientable surface in $\CP2 \# \bCP2$ in homology class $(a_1, a_2) \in H_2(\CP2 \# \bCP2) \cong \Z^2$, with respect to the obvious basis, is
    \[
    G_{\CP2 \# \bCP2}(a_1,a_2) =
    \begin{cases}
        \displaystyle\frac{(|a_1|-1)(|a_1|-2)}2 - \frac{|a_2|(|a_2|-1)}2 & \mbox{if $|a_1| > |a_2|$} \vspace{10pt}\\
        0 & \mbox{if $|a_1| = |a_2|$} \vspace{10pt}\\
        \displaystyle\frac{(|a_2|-1)(|a_2|-2)}2 - \frac{|a_1|(|a_1|-1)}2 & \mbox{if $|a_1| < |a_2|$}
    \end{cases}
    \]
\end{theorem}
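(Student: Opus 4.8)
The formula prescribes a value of the minimal genus in every class $(a_1,a_2)$, so my plan is to prove it by producing a matching upper bound (an explicit surface realizing the stated genus) and lower bound (an obstruction forbidding anything smaller), after first cutting down the number of cases using the symmetries of $X=\CP2\#\bCP2$. I would first note that the minimal genus $G_{\CP2\#\bCP2}(a_1,a_2)$ depends only on the unordered pair $(|a_1|,|a_2|)$: the sign change $a_2\mapsto-a_2$ is realized by the reflection in the exceptional sphere $E$ (the $H_2$-automorphism $x\mapsto x+2(x\cdot E)E$), the simultaneous sign change is realized likewise, and the interchange $|a_1|\leftrightarrow|a_2|$ follows from the orientation-reversing self-diffeomorphism $\CP2\#\bCP2\cong\overline{\CP2\#\bCP2}=\bCP2\#\CP2$ that swaps the two summands (minimal genus is insensitive to the ambient orientation). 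It therefore suffices to treat $a_1\geq a_2\geq0$.

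\textbf{Upper bound.} Writing $H,E$ for the standard generators with $H^2=1$, $E^2=-1$, $H\cdot E=0$, for $a_1>a_2\geq0$ I realize the class $a_1H-a_2E$ as the proper transform of a degree-$a_1$ plane curve having a single ordinary point of multiplicity $a_2$ at the blow-up point and otherwise smooth; such curves exist since $a_1>a_2$, and the proper transform is a smoothly (indeed holomorphically) embedded surface whose genus is the degree-genus value corrected by the blow-up, namely $\frac{(a_1-1)(a_1-2)}2-\frac{a_2(a_2-1)}2$. For the square-zero case $|a_1|=|a_2|$ the class is a multiple $d(H\mp E)$ of a square-$0$ sphere class ($H-E$ is the fiber of $\CP2\#\bCP2=\mathbb{F}_1\to\CP1$, while $H+E$ is obtained by tubing together the disjoint spheres $H$ and $E$); since the normal bundle is trivial, $d$ parallel disjoint copies give a (possibly disconnected) genus-$0$ representative, matching the claimed value $0$.

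\textbf{Lower bound.} For the matching lower bound I would invoke the adjunction inequality coming from Seiberg-Witten theory. Equipping $X$ with its standard K\"ahler structure, whose canonical class is $K=-3H+E$, and taking any embedded $\Sigma$ in class $a_1H-a_2E$ with $\Sigma^2=a_1^2-a_2^2>0$ and positive symplectic area, the adjunction inequality reads
\[
2g(\Sigma)-2\;\geq\;K\cdot[\Sigma]+[\Sigma]^2=(-3a_1+a_2)+(a_1^2-a_2^2),
\]
which rearranges to exactly $g(\Sigma)\geq\frac{(a_1-1)(a_1-2)}2-\frac{a_2(a_2-1)}2$; since the holomorphic curve above attains equality, the bound is sharp. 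In the square-zero regime the trivial bound $g\geq0$ suffices.

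\textbf{The main obstacle.} The delicate point is the lower bound, because $\CP2\#\bCP2$ has $b_2^+=1$: the Seiberg-Witten invariants, and hence the adjunction inequality, are chamber-dependent and governed by the wall-crossing formula, so one cannot quote the $b_2^+>1$ adjunction inequality verbatim. What must be extracted from the symplectic geometry (Taubes' nonvanishing in the symplectic chamber, or the Ozsv\'ath-Szab\'o proof of the symplectic Thom conjecture) is precisely the \emph{one-sided}, chamber-correct inequality $2g-2\geq K\cdot[\Sigma]+[\Sigma]^2$ with $K$ the symplectic canonical class. Here the sign normalization matters: for $a_1>a_2\geq0$ one has $K\cdot[\Sigma]=-3a_1+a_2<0$, and replacing this by the naive two-sided version $2g-2\geq|K\cdot[\Sigma]|+[\Sigma]^2$ would produce a strictly larger, and hence false, lower bound. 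The heart of the argument is therefore to verify that only the chamber determined by the symplectic form contributes, to check that the chosen class $a_1H-a_2E$ has positive pairing with the K\"ahler form so that it lies in that chamber, and to confirm that the resulting sharp inequality is available uniformly across the three regimes of the formula.
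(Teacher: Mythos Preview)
The paper does not prove this theorem at all: it is quoted verbatim as \cite[Corollary 1.3]{R:minimalgenus} and used as a black box, so there is no ``paper's own proof'' to compare your attempt against.

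For what it is worth, your outline is essentially the standard argument and is close in spirit to Ruberman's. The symmetry reduction and the upper bound via proper transforms of plane curves with an ordinary multiple point are the right constructions (though the existence of an \emph{irreducible} degree-$a_1$ plane curve with a single ordinary $a_2$-fold point and no other singularities deserves a sentence of justification, e.g.\ via a dimension count and Bertini, or one can bypass algebraic curves entirely and build the surface topologically). You have also correctly isolated the genuine content: the lower bound lives in the $b_2^+=1$ world, so one cannot invoke the usual adjunction inequality directly but must go through the wall-crossing formula for Seiberg--Witten invariants (this is exactly what Ruberman does) or through the symplectic Thom conjecture. Your computation $2g-2\geq K\cdot[\Sigma]+[\Sigma]^2$ with $K=-3H+E$ is the right endpoint, and your warning about the one-sided nature of the inequality is on target.
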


Let us now return to the link $L = A \cup B$ whose sliceness in $\CP2 \# \bCP2$ we are trying to obstruct. If either $A$ or $B$ were slice in the 4-ball, then by Proposition \ref{prop:allslice1} the link would be slice in $\CP2 \# \bCP2$. The next simplest assumption we can make is the following:
\begin{enumerate}[font=\textbf]
    \item[({\crtcrossreflabel{A2}[it:A2]})]
    $g_{B^4}(A) = g_{B^4}(B) = 1$.
\end{enumerate}

\begin{lemma}
\label{lem:A}
Suppose that $A$ is a knot with $g_{B^4}(A)=1$ which is smoothly slice in $\CP2 \# \bCP2$, with a disc in homology class $\a$. Then $\a = (a_1, a_2)$, where: 
\begin{itemize}
    \item $||a_1|-|a_2||\leq1$; or
    \item $\set{|a_1|,|a_2|} = \set{0,2}$; or
    \item $\set{|a_1|,|a_2|} = \set{0,3}$; or
    \item $\set{|a_1|,|a_2|} = \set{1,3}$.
\end{itemize}
\end{lemma}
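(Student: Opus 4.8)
The plan is to combine the 4-ball genus assumption with Ruberman's genus formula (Theorem \ref{thm:Ruberman2}) in exactly the same way as the analogous step for $S^2 \times S^2$. Suppose $A$ bounds a smooth disc $D_A \subset X^\circ$ in homology class $\a = (a_1, a_2)$. Cap off $D_A$ with a minimal-genus surface for the mirror of $A$ in $B^4$; since $g_{B^4}(A) = 1$, this produces a closed, smoothly embedded orientable surface in $X = \CP2 \# \bCP2$ of genus at most $1$ representing $\a$. Hence $G_{\CP2 \# \bCP2}(a_1, a_2) \leq 1$, and the task reduces to determining which pairs $(a_1, a_2) \in \Z^2$ satisfy this inequality according to Ruberman's formula.

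The computation is then elementary casework on the three branches of the formula. Without loss of generality, by the symmetry $(a_1, a_2) \leftrightarrow (a_2, a_1)$ and the sign symmetries $a_i \mapsto -a_i$, assume $a_1 \geq a_2 \geq 0$. If $a_1 = a_2$ the genus is $0$, consistent with $||a_1|-|a_2|| = 0 \leq 1$. If $a_1 > a_2$, the condition becomes
\[
\frac{(a_1-1)(a_1-2)}{2} - \frac{a_2(a_2-1)}{2} \leq 1.
\]
When $a_1 = a_2 + 1$, the left side equals $\frac{a_2(a_2-1)}{2} - \frac{a_2(a_2-1)}{2} = 0 \leq 1$, always satisfied, giving the first bullet. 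It then remains to check, for each fixed small gap $d = a_1 - a_2 \geq 2$, for which values of $a_2$ the inequality holds: writing $a_1 = a_2 + d$, the left side is a quadratic in $a_2$ with positive leading coefficient (once $d \geq 2$ it is $\frac{d^2 - 3d + 2}{2} + a_2(d-2) \cdot \tfrac{?}{}$ — in any case increasing in $a_2$ for $d \geq 2$ except the borderline $d=2$), so only finitely many $a_2$ survive; one checks directly that these are exactly $\{|a_1|, |a_2|\} \in \{\{0,2\}, \{0,3\}, \{1,3\}\}$ (e.g., $(2,0)$ gives $0 - 0 = 0$; $(3,0)$ gives $1 - 0 = 1$; $(3,1)$ gives $1 - 0 = 1$; $(4,0)$ gives $3 > 1$; $(3,2)$ gives $\frac{d=1}{}$ case already covered; $(4,1)$ gives $3 - 0 = 3 > 1$; $(4,2)$ gives $3 - 1 = 2 > 1$; $(5,3)$ gives $6 - 3 = 3 > 1$). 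Assembling all surviving pairs and restoring signs and the $a_1 \leftrightarrow a_2$ symmetry yields precisely the four bulleted alternatives in the statement.

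I do not expect any genuine obstacle here: the only ``hard'' part is being careful and exhaustive in the finite casework, making sure no pair with genus $\leq 1$ is missed and that the $d = 2$ case (where the quadratic degenerates to linear in $a_2$) is handled correctly — for $d = 2$ the left side is $\frac{2\cdot 1}{2} + 0 = 0$ when $a_2 = 0$ but one must verify it stays $\leq 1$ only for $a_2 = 0$ (indeed $(4,2)$ already gives $2$). Since the genus function grows quadratically in $\max(|a_1|,|a_2|)$ once the two coordinates differ by at least $2$, the list of solutions is manifestly finite and short, and a direct enumeration up to, say, $\max(|a_1|,|a_2|) = 4$ suffices to close the argument.
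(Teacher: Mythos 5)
Your proposal is correct and follows exactly the paper's argument: cap off the slice disc with a minimal-genus surface in $B^4$ to get a closed genus-$1$ surface, then enumerate the classes with $G_{\CP2\#\bCP2}\leq 1$ via Theorem \ref{thm:Ruberman2} (the paper simply states the outcome of this ``analysis'' without showing the casework you spell out). One small slip in your narration: for gap $d=2$ the genus equals $a_2$, so it is $\leq 1$ for $a_2=0$ \emph{and} $a_2=1$ (yielding both $\set{0,2}$ and $\set{1,3}$), not only for $a_2=0$ as one sentence claims --- but your explicit enumeration does include $(3,1)$, so the final list is right.
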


\begin{proof}
If $A$ bounds a smooth disc $D_A \subseteq (S^2 \times S^2)^\circ$ in homology class $\a$, then $\a$ is represented by a closed surface of genus 1 (obtained by capping off a minimal genus surface for $A$ in $B^4$ with the slice disc $D_A$), and therefore $G_{\CP2 \# \bCP2}(\a) \leq 1$.

An analysis of the function $G_{\CP2 \# \bCP2}(a_1,a_2)$ shows that the values $0$ and $1$ are attained if and only if $||a_1|-|a_2||\leq1$, $\set{|a_1|,|a_2|} = \set{0,2}$, $\set{|a_1|,|a_2|} = \set{0,3}$, or $\set{|a_1|,|a_2|} = \set{1,3}$.
\end{proof}

\begin{remark}
    This is the only point in our argument where the adjective \emph{smooth} actually makes a difference. As far as the authors know, the topological genus function of $\CP2 \# \bCP2$ (defined using locally flat embeddings as opposed to smooth ones) is not known.
    However, as every primitive class of $\CP2 \# \bCP2$ is represented by a locally flat embedded sphere (by a result of Lee-Wilczy\'nski), there would be many more classes to consider in the case analysis. The paper of Lee-Wilczy\'nski, which was brought to our attention by Arunima Ray, more generally gives an upper bound to the topological genus function by determining exactly when there is a \emph{simple} topological embedding, cf.\ \cite[Theorem 1.2]{LW:locallyflatspheres}.
    See also \cite{KPTR:surfaces}.
\end{remark}

\subsection{Symmetries}

We will use symmetries to reduce the number of pairs of homology classes $(\a,\b)$ that we need to study. In addition to using the symmetries of $X:=\CP2 \# \bCP2$, we will make the following assumption:
\begin{enumerate}[font=\textbf]
    \item[({\crtcrossreflabel{A3}[it:A3]})] The link $L$ in Figure \ref{fig:S2xS2structure} has an ambient isotopy that swaps $A$ and $B$.
\end{enumerate}

With the above assumption, we list all the orientation-preserving symmetries we can use and their action:
\begin{enumerate}[font=\textbf]
    \item[({\crtcrossreflabel{S1}[it:S1]})] complex conjugation on the $\CP2$ summand:
    \newline acts on $H_2(X)$ by $(a_1,a_2)\mapsto(-a_1,a_2)$;
    \item[({\crtcrossreflabel{S2}[it:S2]})] complex conjugation on the $\bCP2$ summand:
    \newline acts on $H_2(X)$ by $(a_1,a_2)\mapsto(a_1,-a_2)$;
    \vspace{2pt}
    \item[({\crtcrossreflabel{S3}[it:S3]})] assumption \eqref{it:A3}:
    \newline acts on pairs $(\a,\b) \in H_2(X) \times H_2(X)$ by $(\a,\b) \mapsto (\b,\a)$.
\end{enumerate}

\subsection{Linking number and Arf invariant}
We first rule out the possibility that $\a$ is of the form $(a, \pm a)$ (or that $\b$ is of the form $(b,\pm b)$). To this end, we make two assumptions:
\begin{enumerate}[font=\textbf]
    \item[({\crtcrossreflabel{A4a}[it:A4a]})] $\lk(A,B) \not\equiv 0 \pmod 2$.
    \item[({\crtcrossreflabel{A5}[it:A5]})] $\Arf A = \Arf B = 1$.
\end{enumerate}
We remark that \eqref{it:A4a} is one of the assumptions that we will make on the linking number. This assumption, together with the upcoming ones \eqref{it:A4b}-\eqref{it:A4f}, will be subsumed in assumption \eqref{it:A4}, which is the one listed in the Appendix at page \pageref{appendix:CP2bCP2}.

\begin{lemma}
\label{lem:aa}
Suppose that $L=A\cup B$ is a link satisfying \eqref{it:A4a} and \eqref{it:A5} which is smoothly slice in $\CP2 \# \bCP2$, with discs in homology classes $\a$ and $\b$. Then $\a$ (and likewise $\beta$) is not of the form $(a,\pm a)$ or $(0,\pm2)$ or $(\pm2,0)$.
\end{lemma}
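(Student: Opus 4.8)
My plan is to split into two regimes according to the parity of the coordinates of $\a = (a_1, a_2)$. Throughout I would use that $X = \CP2\#\bCP2$ satisfies $\sigma(X) = 0$, $b_2(X) = 2$, $\ks(X) = 0$, that its intersection form is $\langle 1\rangle\oplus\langle-1\rangle$ (so that $\a^2 = a_1^2 - a_2^2$ in the obvious basis), and that $\a$ is characteristic exactly when both $a_1$ and $a_2$ are odd.

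The first and key step is the linking-number constraint. Since $L$ is smoothly slice, the discs $D_A$ and $D_B$ are disjoint, so $\#(D_A\pitchfork D_B) = 0$; plugging into \eqref{eq:lkformula} and using $\lk(m(L)) = -\lk(A,B)$ gives $\a\cdot\b = -\lk(A,B)$, which is \emph{odd} by \eqref{it:A4a}. Consequently $\a$ cannot have both coordinates even: otherwise $\a\cdot\b = a_1 b_1 - a_2 b_2$ would be even for every $\b$. This single observation already eliminates $\a = (0,\pm 2)$, $\a = (\pm 2, 0)$, and $\a = (a,\pm a)$ with $a$ even.

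The remaining case is $\a = (a,\pm a)$ with $a$ odd. Here $\a$ is characteristic and $\a^2 = a^2 - a^2 = 0$, so $D_A$ is a properly embedded characteristic disc in $X^\circ$ bounding the knot $A$. I would then apply Theorem \ref{thm:Arf}: because $D_A$ is a disc we have $\Arf(X, D_A) = 0$, and because $X$ is smooth we have $\ks(X) = 0$, so the formula reduces to $0 = \frac{\sigma(X) - \a^2}{8} \equiv \Arf(A) \pmod 2$, contradicting $\Arf A = 1$ from \eqref{it:A5}. The assertion for $\b$ is obtained by repeating the argument verbatim with $A$ and $B$ (and $\a$ and $\b$) interchanged, now invoking $\Arf B = 1$.

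I do not expect a genuine obstacle here; the only thing to be careful about is the bookkeeping: the even-coordinate possibilities are dispatched purely by the parity of the linking number (no signature or genus function is needed), while the lone characteristic possibility $(a,\pm a)$ with $a$ odd is precisely where assumption \eqref{it:A5} on the Arf invariant enters.
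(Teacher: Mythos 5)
Your proposal is correct and follows essentially the same route as the paper: the even-coordinate cases (including $(0,\pm2)$, $(\pm2,0)$, and $(a,\pm a)$ with $a$ even) are excluded by the parity of $\a\cdot\b = -\lk(A,B)$ via Equation \eqref{eq:lkformula}, and the case $(a,\pm a)$ with $a$ odd is excluded because the class is then characteristic with square zero, so Theorem \ref{thm:Arf} forces $\Arf A \equiv 0$, contradicting \eqref{it:A5}. No gaps.
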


\begin{proof}
If either $\a$ or $\b$ were of the form $(0,\pm2)$ or $(\pm2,0)$, then $\lk(A,B) = -(a_1 b_1 - a_2 b_2)$ would be even, contradicting \eqref{it:A4}.

If instead $\a=(a,\pm a)$, let $\b = (b_1, b_2)$. Then, by Equation \eqref{eq:lkformula}
\[
a (b_1 \mp b_2) = - \lk(A,B),
\]
which is odd by \eqref{it:A4}. This implies that $a$ is odd too, and that therefore $\a = (a,\pm a)$ is a characteristic class. Then, using Theorem \ref{thm:Arf} we can compute
\[
\Arf A \equiv \frac{\sigma(\CP2 \# \bCP2) - \a^2}8 \equiv 0 \pmod2,
\]
which contradicts assumption \eqref{it:A5}.
\end{proof}

We add a further assumption on the linking number to rule out the classes $(\pm3,0)$ or $(0,\pm3)$:
\begin{enumerate}[font=\textbf]
    \item[({\crtcrossreflabel{A4b}[it:A4b]})] $\lk(A,B) \not\equiv 0 \pmod 3$.
\end{enumerate}

\begin{lemma}
\label{lem:aa2}
Suppose that $L=A\cup B$ is a link satisfying \eqref{it:A4b} which is smoothly slice in $\CP2 \# \bCP2$, with discs in homology classes $\a$ and $\b$. Then $\a$ (and likewise $\b$) is not of the form $(\pm3,0)$ or $(0,\pm3)$.
\end{lemma}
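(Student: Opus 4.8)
The plan is to argue exactly as in the first paragraph of the proof of Lemma \ref{lem:aa}, via a divisibility computation on the intersection number $\a \cdot \b$. Recall from Equation \eqref{eq:lkformula} that, since $D_A$ and $D_B$ are disjoint, $\#(D_A \pitchfork D_B) = 0$, and hence $\a \cdot \b = \lk(m(L)) = -\lk(A,B)$. Thus assumption \eqref{it:A4b} says precisely that $\a \cdot \b \not\equiv 0 \pmod 3$, and the whole lemma reduces to showing that the four candidate classes $(\pm 3,0)$ and $(0,\pm 3)$ cannot satisfy this.

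Next I would suppose, towards a contradiction, that $\a$ is one of $(\pm 3,0)$ or $(0,\pm 3)$, and write $\b = (b_1,b_2)$. Using that the intersection form of $X = \CP2 \# \bCP2$ is the diagonal form $\langle 1 \rangle \oplus \langle -1 \rangle$, the pairing $\a \cdot \b$ equals $\pm 3 b_1$ in the first case and $\mp 3 b_2$ in the second; in either case it is divisible by $3$, contradicting $\a \cdot \b \not\equiv 0 \pmod 3$. The corresponding statement for $\b$ then follows identically (the roles of $\a$ and $\b$ in Equation \eqref{eq:lkformula} being symmetric), or alternatively by invoking symmetry \eqref{it:S3}.

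There is essentially no obstacle here: the only inputs are Equation \eqref{eq:lkformula}, already established, together with the elementary fact that any homology class of the form $(\pm 3,0)$ or $(0,\pm 3)$ pairs to a multiple of $3$ with every class in $H_2(X)$. This lemma is simply the $p=3$ analogue of the first half of Lemma \ref{lem:aa}, which disposed of the $p=2$ classes $(0,\pm 2)$ and $(\pm 2,0)$, so no new idea is required and the proof will be only a few lines long.
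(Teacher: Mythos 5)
Your proposal is correct and matches the paper's proof: both use Equation \eqref{eq:lkformula} to identify $\a\cdot\b$ with $-\lk(A,B)$ and then observe that a class of the form $(\pm3,0)$ or $(0,\pm3)$ pairs to a multiple of $3$ with everything, contradicting \eqref{it:A4b}. Your write-up is just a slightly more explicit version of the same one-line argument.
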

\begin{proof}
If $\a$ were of the form $(\pm3,0)$ or $(0,\pm3)$, then by Equation \eqref{eq:lkformula} the linking number $\lk(A,B)$ would be divisible by 3, contradicting \eqref{it:A4b}.
\end{proof}

By putting together everything obtained so far, we can prove the following lemma.

\begin{lemma}
\label{lem:B1-5}
Suppose that $L=A\cup B$ is a link satisfying \eqref{it:A1}-\eqref{it:A3}, \eqref{it:A4a}, \eqref{it:A4b}, and \eqref{it:A5} which is smoothly slice in $\CP2 \# \bCP2$, with discs in homology classes $\a$ and $\b$.
Then we can assume that \emph{either} both $\a$ and $\b$ belong to an infinite family (not necessarily the same) among the following:
\begin{enumerate}
    \item $(a,a+1)$,
    \item $(a+1,a)$,
    \item $(a,-(a+1))$,
    \item $(a+1,-a)$;
\end{enumerate}
\emph{or} $\a$ belongs to one of the above infinite families while $\b$ is one of the eight sporadic classes cases $(\pm1, \pm3)$ and $(\pm3, \pm1)$.
\end{lemma}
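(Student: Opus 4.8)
The plan is to assemble the ingredients already in place: Lemma~\ref{lem:A} to constrain the individual homology classes, Lemmas~\ref{lem:aa} and~\ref{lem:aa2} to delete the degenerate ones, a finite bookkeeping step to repackage what survives into the four infinite families, and symmetry~\eqref{it:S3} together with Equation~\eqref{eq:lkformula} to handle the interaction between $\a$ and $\b$. So suppose $L = A \cup B$ is smoothly slice in $\CP2\#\bCP2$ with discs in homology classes $\a$ and $\b$. By~\eqref{it:A2} we have $g_{B^4}(A) = g_{B^4}(B) = 1$, so Lemma~\ref{lem:A} applies to each of $\a$ and $\b$: writing $\a = (a_1, a_2)$, either $\bigl||a_1| - |a_2|\bigr| \le 1$, or $\set{|a_1|, |a_2|}$ equals $\set{0,2}$, $\set{0,3}$, or $\set{1,3}$ (and symmetrically for $\b$). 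Lemma~\ref{lem:aa}, which uses~\eqref{it:A4a} and~\eqref{it:A5}, then rules out the subcase $|a_1| = |a_2|$ of the first bullet, as well as the classes $(0, \pm 2)$ and $(\pm 2, 0)$; and Lemma~\ref{lem:aa2}, which uses~\eqref{it:A4b}, rules out $(0, \pm 3)$ and $(\pm 3, 0)$. What remains for each of $\a$ and $\b$ is: either $\bigl||a_1| - |a_2|\bigr| = 1$, or the class is one of the eight sporadic classes $(\pm 1, \pm 3)$ and $(\pm 3, \pm 1)$.

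The next step is the elementary observation that every class $(a_1, a_2)$ with $\bigl||a_1|-|a_2|\bigr| = 1$ can be written, for an appropriate $a \in \Z$, as one of $(a, a+1)$, $(a+1, a)$, $(a, -(a+1))$, or $(a+1, -a)$; this is a short case check over the two orderings of $\set{|a_1|, |a_2|}$ and the four sign patterns, letting $a$ range over all of $\Z$ (symmetries~\eqref{it:S1}--\eqref{it:S2} can be invoked here if one wishes to reduce the casework). It then remains to exclude the possibility that $\a$ \emph{and} $\b$ are both sporadic. In that case all four of $a_1, a_2, b_1, b_2$ are odd, so $\a \cdot \b = a_1 b_1 - a_2 b_2$ is even; but since the discs $D_A$ and $D_B$ are disjoint, Equation~\eqref{eq:lkformula} gives $\a \cdot \b = -\lk(A,B)$, which is odd by~\eqref{it:A4a}, a contradiction. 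Hence at most one of $\a, \b$ is sporadic; if one of them is, symmetry~\eqref{it:S3} lets us swap so that it is $\b$, and then $\a$, having $\bigl||a_1|-|a_2|\bigr| = 1$, lies in one of the four families --- which is the asserted dichotomy.

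This argument is essentially bookkeeping, so there is no single difficult step. The one place where a slip is easiest is the repackaging into the four families: one must check that, as $a$ ranges over all of $\Z$ rather than just $a \ge 0$, the four families together realise every sign pattern and both orderings of $|a_1|$ and $|a_2|$. The rest --- combining the earlier lemmas and the one-line parity argument --- is routine.
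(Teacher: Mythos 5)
Your proposal is correct and follows essentially the same route as the paper: apply Lemma \ref{lem:A} (via \eqref{it:A2}) to each class, prune with Lemmas \ref{lem:aa} and \ref{lem:aa2}, rule out the doubly sporadic case by the parity of $\a\cdot\b$ against \eqref{it:A4a}, and invoke \eqref{it:A3} to put the sporadic class on $\b$. The only difference is that you spell out the repackaging of $\bigl||a_1|-|a_2|\bigr|=1$ into the four families, which the paper leaves implicit.
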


\begin{proof}
Lemmas \ref{lem:A}, \ref{lem:aa}, and \ref{lem:aa2} imply that each of $\a$ and $\b$ belongs to one of the four infinite families or is one of the eight sporadic classes $(\pm1, \pm3)$ and $(\pm3, \pm1)$.

To rule out the possibility that both $\a$ and $\b$ are sporadic classes, we notice that in such a case the linking number $\lk(A,B) = -\a\cdot\b$ would be even, in contradiction with assumption \eqref{it:A4a}.
Thus, either each of $\a$ and $\b$ belongs to one of the infinite families, or exactly one of them is $(\pm1, \pm3)$ or $(\pm3, \pm1)$, in which case by assumption \eqref{it:A3} we can assume it is $\b$.
\end{proof}

\subsection{Ruling out the pairs of infinite families}
Moving from Lemma \ref{lem:B1-5}, we start by obstructing the case when both $\a$ and $\b$ are in one of the infinite families. The possible pairs of homology classes are summarised in Table \ref{tab:lkCP2}.

Noting that symmetries \eqref{it:S1} and \eqref{it:S2} combined permute the 4 infinite families from Lemma \ref{lem:B1-5} transitively, we can restrict to the first row of the table, where $\a$ is of the form $(a, a+1)$.

\begin{table}[]
    \resizebox{\textwidth}{!}{
    \begin{tabular}{c|c|c|c|c}
     & $(b,b+1)$ & $(b+1,b)$ & $(b,-(b+1))$ & $(b+1,-b)$ \\
    \hline
    $(a,a+1)$ & $-(a+b+1)$\cellcolor{Snow2} & $a-b$\cellcolor{LightBlue1} & $2ab + a + b + 1$\cellcolor{PaleGreen1} & $2ab + a + b$\cellcolor{PaleGreen1} \\
    \hline
    $(a+1,a)$ & $b-a$\cellcolor{Tan1} & $a+b+1$\cellcolor{Tan1} & $2ab + a + b$\cellcolor{Tan1} & $2ab + a + b + 1$\cellcolor{Tan1} \\
    \hline
    $(a,-(a+1))$ & $2ab + a + b + 1$\cellcolor{Tan1} & $2ab + a + b$\cellcolor{Tan1} & $-(a+b+1)$\cellcolor{Tan1} & $a-b$\cellcolor{Tan1} \\
    \hline
    $(a+1,-a)$ & $2ab + a + b$\cellcolor{Tan1} & $2ab + a + b + 1$\cellcolor{Tan1} & $b-a$\cellcolor{Tan1} & $a+b+1$\cellcolor{Tan1}
    \end{tabular}
    }
    \vspace{10pt}
    \caption{The table shows the possible pairs of homology classes $(\a,\b)$, for $a,b \in \Z$, under the assumption that both $\a$ and $\b$ are in one of the infinite families from Lemma \ref{lem:B1-5} (not necessarily the same).
    The value in each cell is the intersection number $\a\cdot\b$.
    Cell (1,1) is grey, cell (1,2) is blue, cells (1,3) and (1,4) are green, and the rest are orange. The orange cells can be discarded by symmetry considerations.
    }
    \label{tab:lkCP2}
\end{table}

\subsubsection{Ruling out the grey cell}
We start by ruling out the grey cell of Table \ref{tab:lkCP2}.
We make the following further assumption on the linking number:
\begin{enumerate}[font=\textbf]
    \item[({\crtcrossreflabel{A4c}[it:A4c]})] $|\lk(A,B)| \geq 4$.
\end{enumerate}

\begin{lemma}
\label{lem:table1}
    Suppose that $L=A\cup B$ is a link satisfying \eqref{it:A1}, \eqref{it:A2}, \eqref{it:A4a}, and \eqref{it:A4c} which is smoothly slice in $\CP2 \# \bCP2$, with discs in homology classes $\a$ and $\b$.
    Then the pair $(\a,\b) \neq ((a,a+1),(b,b+1))$ for some $a,b \in \Z$.
\end{lemma}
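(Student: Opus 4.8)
The plan is to play the first bullet point of Lemma~\ref{lem:sumsandcablings} against Ruberman's genus formula (Theorem~\ref{thm:Ruberman2}), and then collide the resulting constraint with the linking-number assumption~\eqref{it:A4c}. The argument should be short.

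First I would argue by contradiction, assuming $\a=(a,a+1)$ and $\b=(b,b+1)$ for some $a,b\in\Z$. By the first bullet of Lemma~\ref{lem:sumsandcablings}, the knot $A\# B$ bounds a smooth disc in $X^\circ$ in homology class $\a+\b=(a+b,\,a+b+2)$. By \eqref{it:A2} and the subadditivity of the smooth $4$-genus under connected sum, $g_{B^4}(A\# B)\le g_{B^4}(A)+g_{B^4}(B)=2$; capping off a genus-minimising surface for $A\# B$ in $B^4$ with this disc produces a closed, smoothly embedded surface of genus at most $2$ in $X=\CP2\#\bCP2$ representing the class $(a+b,\,a+b+2)$. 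Hence $G_{\CP2\#\bCP2}(a+b,\,a+b+2)\le 2$.

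Next I would substitute the class $(s,s+2)$, where $s:=a+b$, into Theorem~\ref{thm:Ruberman2} and record exactly when the value is at most $2$. A short case analysis on the signs of $s$ and $s+2$ gives $G_{\CP2\#\bCP2}(s,s+2)=s$ for $s\ge 0$, $G_{\CP2\#\bCP2}(s,s+2)=0$ for $s\in\{-1,-2\}$, and $G_{\CP2\#\bCP2}(s,s+2)=-s-2$ for $s\le -3$. Therefore $G_{\CP2\#\bCP2}(s,s+2)\le 2$ forces $-4\le s\le 2$, i.e.\ $-4\le a+b\le 2$.

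Finally I would invoke Equation~\eqref{eq:lkformula} and the intersection number computed in cell $(1,1)$ of Table~\ref{tab:lkCP2}, namely $\lk(A,B)=-\a\cdot\b=a+b+1$. Assumption~\eqref{it:A4c} then gives $|a+b+1|\ge 4$, so $a+b\ge 3$ or $a+b\le -5$, directly contradicting $-4\le a+b\le 2$; this finishes the proof. There is no real obstacle here: the only step requiring any care is the elementary case analysis in the genus computation of the third paragraph. (Note that assumption~\eqref{it:A4a} is not actually used in this lemma, though it is kept among the running hypotheses for uniformity.)
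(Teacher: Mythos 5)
Your proof is correct, but it takes a genuinely different route from the paper's. The paper first uses \eqref{it:A4a} to see that $a+b$ is even, so that $\a+\b=(a+b,a+b+2)$ is $2$-divisible; it then applies the Levine--Tristram obstruction (Theorem \ref{thm:signature} with $m=2$, $r=1$) to $A\#B$, getting $\left|\sigma_{A\#B}(-1)+2\lk(A,B)\right|\le 2$, and combines this with the bound $|\sigma_{A\#B}(-1)|\le 4$ coming from \eqref{it:A2} to conclude $|\lk(A,B)|\le 3$, contradicting \eqref{it:A4c}. You instead cap off the disc for $A\#B$ with a genus-minimising surface in $B^4$ and invoke Ruberman's smooth genus function, exactly as the paper itself does in Lemma \ref{lem:A} and in Case 1 of Lemma \ref{lem:reduction}; your case analysis of $G_{\CP2\#\bCP2}(s,s+2)$ is correct and yields $-4\le a+b\le 2$, hence $|\lk(A,B)|=|a+b+1|\le 3$, the same contradiction. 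The trade-off: your argument does not need the $2$-divisibility of $\a+\b$, so \eqref{it:A4a} is indeed superfluous here (as you note), and it is arithmetically a little cleaner; on the other hand it relies on the smooth genus function, whereas the paper's signature argument for this particular cell is a locally flat obstruction (though this distinction is moot for the overall theorem, since Lemma \ref{lem:A} already makes the whole case analysis essentially smooth). Both arguments use \eqref{it:A2}, just through different consequences of $g_{B^4}(A)=g_{B^4}(B)=1$.
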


\begin{proof}
By contradiction assume $(\a,\b) = ((a,a+1),(b,b+1))$. By Equation \eqref{eq:lkformula}, $\lk(A,B) = a+b+1$, and assumption \eqref{it:A4a} then implies that the integer $a+b$ is even.

Therefore, the class $\a+\b = (a+b,a+b+2)$ is 2-divisible, and (using Lemma \ref{lem:sumsandcablings}) we can apply Theorem \ref{thm:signature} to $K=A\#B$, $[\Sigma]=\a+\b$, $m=2$, and $r=1$, which after some manipulation yields
\[
\left|
\sigma_{A\#B}(-1) + 2 \cdot \lk(A,B)
\right|
\leq 2.
\]

By assumption \eqref{it:A2}, the 4-genera of $A$ and $B$ are 1, and therefore their signatures are in absolute value less than $2$, implying $|\sigma_{A\#B}(-1)| \leq 4$. Thus, by the triangle inequality, we get
\[
|2 \cdot \lk(A,B)| \leq |\sigma_{A\#B}(-1)| + |\sigma_{A\#B}(-1) + 2 \cdot \lk(A,B)| \leq 4 + 2,
\]
which contradicts \eqref{it:A4c}.
\end{proof}

\subsubsection{Ruling out the blue cell}

To rule out the blue cell, we make our first assumption on the signature function.
\begin{enumerate}[font=\textbf]
    \item[({\crtcrossreflabel{A6}[it:A6]})] $\sigma_A(\zeta_2) = \sigma_B(\zeta_2) \neq 0$.
\end{enumerate}

\begin{lemma}
\label{lem:table3}
    Suppose that $L=A\cup B$ is a link satisfying \eqref{it:A1}, \eqref{it:A2}, \eqref{it:A4a} and \eqref{it:A6} which is smoothly slice in $\CP2 \# \bCP2$, with discs in homology classes $\a$ and $\b$.
    Then the pair $(\a,\b)$ does not correspond to the blue cell in Table \ref{tab:lkCP2}.
\end{lemma}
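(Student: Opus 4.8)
The plan is to run the same machine as in the grey-cell case (Lemma~\ref{lem:table1}): use Lemma~\ref{lem:sumsandcablings} to exhibit a knot bounding a disc in $X^\circ = (\CP2\#\bCP2)^\circ$ in a $2$-divisible homology class, and then apply the Levine--Tristram obstruction of Theorem~\ref{thm:signature} at $m=2$, $r=1$. The feature that makes the blue cell easy is that the relevant self-intersection number vanishes, so the signature inequality collapses to a bound on $\sigma_{A\#B}(\zeta_2)$ alone, which \eqref{it:A6} directly contradicts.

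In detail, I would argue by contradiction, assuming $(\a,\b) = \bigl((a,a+1),(b+1,b)\bigr)$ for some $a,b\in\Z$. First, Equation~\eqref{eq:lkformula} applied to the disjoint discs $D_A, D_B$ gives $\lk(A,B) = -\,\a\cdot\b = b-a$, which is odd by \eqref{it:A4a}; hence $a+b$ is odd and $a+b+1$ is even, so the class $\a+\b = (a+b+1,\,a+b+1)$ is divisible by the prime power $m=2$, with $(\a+\b)^2 = (a+b+1)^2 - (a+b+1)^2 = 0$. Next, by the first bullet of Lemma~\ref{lem:sumsandcablings} (which applies thanks to \eqref{it:A1}), the knot $A\#B$ bounds a smooth disc $D\subset X^\circ$ with $[D]=\a+\b$ and $[D]^2=0$. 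Applying Theorem~\ref{thm:signature} with $\sigma(X)=0$, $b_2(X)=2$, $g=0$, $m=2$, $r=1$ (so that $e^{2\pi r i/m} = -1 = \zeta_2$ and the correction term $\tfrac{2r(m-r)[\Sigma]^2}{m^2}$ vanishes) then yields $\lvert\sigma_{A\#B}(\zeta_2)\rvert \le 2$. On the other hand, the signature is additive under connected sum, so \eqref{it:A6} gives $\sigma_{A\#B}(\zeta_2) = \sigma_A(\zeta_2) + \sigma_B(\zeta_2) = 2\,\sigma_A(\zeta_2)$; since $\sigma_A(\zeta_2) = \sigma_A(-1)$ is a nonzero \emph{even} integer, $\lvert\sigma_{A\#B}(\zeta_2)\rvert = 2\,\lvert\sigma_A(\zeta_2)\rvert \ge 4$, a contradiction.

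I do not expect any real difficulty here. The only two points needing a word of care are the parity bookkeeping that makes $\a+\b$ \emph{exactly} $2$-divisible (this is precisely where \eqref{it:A4a} enters, and it also covers the degenerate subcase $a+b+1=0$, since the zero class is divisible by $2$) and the standard fact that the classical knot signature is even, which upgrades ``$\sigma_A(\zeta_2)\ne 0$'' from \eqref{it:A6} to ``$\lvert\sigma_A(\zeta_2)\rvert\ge 2$''. Assumption~\eqref{it:A2} does not seem to be needed for this particular lemma and is presumably listed only for parallelism with the other table cases.
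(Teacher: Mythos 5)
Your proof is correct and follows essentially the same route as the paper's: identify $\a+\b=(a+b+1,a+b+1)$ as a $2$-divisible class of square zero via the parity of $\lk(A,B)$ from \eqref{it:A4a}, apply Theorem \ref{thm:signature} to the disc for $A\#B$ from Lemma \ref{lem:sumsandcablings}, and contradict \eqref{it:A6}. Your extra remarks — that evenness of the classical signature upgrades $\sigma_A(\zeta_2)\neq 0$ to $|\sigma_{A\#B}(\zeta_2)|\geq 4$, and that \eqref{it:A2} is not actually used here — are both accurate and consistent with the paper's (terser) argument.
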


\begin{proof}
By contradiction assume $(\a,\b) = ((a,a+1),(b+1,b))$. By Equation \eqref{eq:lkformula}, $\lk(A,B) = b-a$, which is odd by assumption \eqref{it:A4a}.

Therefore, the class $\a+\b = (a+b+1,a+b+1)$ is 2-divisible and with square $0$. Thus, by applying Theorem \ref{thm:signature} as in Lemma \ref{lem:table1} we get
\[
\left|
\sigma_{A\#B}(-1)
\right|
\leq 2,
\]
which contradicts assumption \eqref{it:A6}.
\end{proof}

\subsubsection{Ruling out the green cells}

To rule out the green cells of Table \ref{tab:lkCP2} we make another assumption on the linking number.
\begin{enumerate}[font=\textbf]
    \item[({\crtcrossreflabel{A4d}[it:A4d]})] $\lk(A,B) \notin \set{\pm1, \pm3, \pm5, \pm7, \pm11}$.
\end{enumerate}

\begin{lemma}
\label{lem:table2}
    Suppose that $L=A\cup B$ is a link satisfying \eqref{it:A1}, \eqref{it:A2}, \eqref{it:A4a}, \eqref{it:A4d}, and \eqref{it:A6} which is smoothly slice in $\CP2 \# \bCP2$, with discs in homology classes $\a$ and $\b$.
    Then the pair $(\a,\b)$ does not correspond to one of the green cells in Table \ref{tab:lkCP2}.
\end{lemma}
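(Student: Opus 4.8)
The plan is to push the signature obstruction of Lemmas \ref{lem:table1} and \ref{lem:table3} one step further. Write the two green cells as $(\a,\b)=\big((a,a+1),(b,-(b+1))\big)$ (cell $(1,3)$) and $(\a,\b)=\big((a,a+1),(b+1,-b)\big)$ (cell $(1,4)$), with $a,b\in\Z$. By Equation \eqref{eq:lkformula}, $\lk(A,B)=-\a\cdot\b$, which is $-(2ab+a+b+1)$ in the first case and $-(2ab+a+b)$ in the second; since \eqref{it:A4a} makes this odd, $a+b$ is even in cell $(1,3)$ and odd in cell $(1,4)$. In either case $\a+\b$ is $2$-divisible, which is exactly what is needed to apply Theorem \ref{thm:signature}.

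The key move is to apply that theorem to the slice \emph{disc} itself rather than to a capped surface: by the first bullet of Lemma \ref{lem:sumsandcablings}, $A\#B$ bounds a smooth disc $D\subset X^\circ$ in class $\a+\b$, and plugging $\Sigma=D$ (so $g=0$), $K=A\#B$, $m=2$, $r=1$ into Theorem \ref{thm:signature}, with $\sigma(X)=0$ and $b_2(X)=2$, gives
\[
\Big|\,\sigma_{A\#B}(-1)-\tfrac12(\a+\b)^2\,\Big|\le 2.
\]
A short computation with the diagonal intersection form yields $(\a+\b)^2=4ab$ in cell $(1,3)$ and $(\a+\b)^2=4b(a+1)$ in cell $(1,4)$. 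By \eqref{it:A2} the classical signatures of $A$ and $B$ are even with absolute value at most $2$, so \eqref{it:A6} forces $\sigma_A(-1)=\sigma_B(-1)=\pm2$ with a common sign, hence $\sigma_{A\#B}(-1)=\pm4$; the displayed inequality then pins $ab$ (respectively $b(a+1)$) to $\{1,2,3\}$ or $\{-1,-2,-3\}$, and in particular it is nonzero.

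It remains to handle the "vanishing product" strips and a finite list. The loci $ab=0$ and $b(a+1)=0$ are precisely where one of $\a,\b$ is $(0,\pm1)$ or $(\pm1,0)$, and there a single coordinate reflection from \eqref{it:S1}--\eqref{it:S2} (or none at all) moves $(\a,\b)$ into the blue cell, which is ruled out by Lemma \ref{lem:table3} under exactly the assumptions \eqref{it:A1}, \eqref{it:A2}, \eqref{it:A4a}, \eqref{it:A6} we have here. Away from those strips, the parity of $a+b$ rules out an even product (it would be divisible by $4$), so $ab\in\{\pm1,\pm3\}$ in cell $(1,3)$ and $b(a+1)\in\{\pm1,\pm3\}$ in cell $(1,4)$; this leaves finitely many pairs $(a,b)$, and for each of them one checks directly that $\lk(A,B)\in\{\pm1,\pm3,\pm5,\pm7,\pm11\}$, contradicting \eqref{it:A4d}.

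The main obstacle, and the reason this case is bulkier than the grey and blue ones, is that the signature bound only constrains a \emph{product} of the two parameters and says nothing on the strips where that product vanishes. So one has to observe separately that those strips are not genuinely new --- they sit inside the already-dispatched blue cell after a reflection --- and that the slightly ad hoc-looking exclusion set in \eqref{it:A4d} has been engineered so that every surviving small pair produces a forbidden linking number. A purely bookkeeping point throughout is the distinction between $A\#B$ and its mirror, which only flips overall signs and is absorbed by the freedom to replace $(A,B)$ by their mirror images.
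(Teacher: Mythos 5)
Your argument is correct and follows essentially the same route as the paper: apply Theorem \ref{thm:signature} with $m=2$ to the $2$-divisible class $\a+\b$ via Lemma \ref{lem:sumsandcablings}, use \eqref{it:A2} and \eqref{it:A6} to force $\sigma_{A\#B}(-1)=\pm4$ and hence pin $ab$ (resp.\ $b(a+1)$) to $\{\pm1,\pm3\}$ after the parity exclusion of $\pm2$, and check that the resulting linking numbers all lie in the set forbidden by \eqref{it:A4d}. The only remarks: your paragraph re-dispatching the $ab=0$ strips via a reflection into the blue cell is redundant (you already excluded them, and as stated that reduction would also need the swap symmetry \eqref{it:S3}, hence \eqref{it:A3}, which is not among this lemma's hypotheses), and the final enumeration of $\a\cdot\b$ over the finitely many pairs is asserted rather than carried out, though it does yield exactly $\{\pm1,\pm3,\pm5,\pm7,\pm11\}$ as in the paper.
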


\begin{proof}
We can treat both cases simultaneously by writing
\[
\a=(a, a+1) \qquad \mbox{and} \qquad \b=(b+\h, -(b+1-\h))
\]
for some $\h \in \set{0,1}$. We can compute the pairing
\begin{equation}
\label{eq:table-a.b}
\begin{aligned}
    \a\cdot\b &= a(b+\h) + (a+1)(b+1-\h) \\
    &= 2ab + a + b - \h + 1,
\end{aligned}
\end{equation}
which is odd by assumption \eqref{it:A4a}, and thus deduce that the integer $a+b-\h$ must be even. It then follows that the class
\[
\a+\b = (a+b+\h, a-b+\h)
\]
is 2-divisible. Using Lemma \ref{lem:sumsandcablings} we can apply Theorem \ref{thm:signature} to $K=A\#B$, $[\Sigma]=\a+\b$, $m=2$, and $r=1$, which after some manipulation yields
\[
\left|
\sigma_{A\#B}(-1) - 2(a+\h)b
\right|
\leq 2.
\]

Arguing as in the proof of Lemma \ref{lem:table1}, we deduce that $|(a+\h)\cdot b| \leq 3$, but assumption \eqref{it:A6} allows us to discard the case $(a+\h)\cdot b=0$, as in Lemma \ref{lem:table3}. Thus, we get
\begin{equation}
\label{eq:table2-1}
(a+\h)\cdot b \in \set{\pm1, \pm2, \pm3}.
\end{equation}

For each of the possible pairs $(a+\h,b)$ satisfying Equation \eqref{eq:table2-1}, we will compute the pairing $\a\cdot\b$, which equals $-\lk(A,B)$, and check that it is one of the values listed in assumption \eqref{it:A4d}.

If we let $\tilde{a} = a+\h$, then the possible values of the pairs $(\tilde{a}, b)$ satisfying Equation \eqref{eq:table2-1} are
\begin{equation}
\label{eq:12cases}
\begin{aligned}
    &(1,1) & \quad & (1,-1) & \quad & (-1,1) & \quad & (-1,-1) \\
    &(1,3) & \quad & (1,-3) & \quad & (-1,3) & \quad & (-1,-3)
\end{aligned}
\end{equation}
and the ones obtained from these by swapping the two coordinates.
Note that we do not need to consider the case when $\tilde{a} b = \pm2$, because this has no solutions under our assumption (noted earlier) that $\tilde{a} + b = a + b + \h$ is an even number.

To simplify the computation of $\a\cdot\b$ for all these cases, we first note that the expression \eqref{eq:table-a.b} is symmetric in $a$ and $b$, hence we can re-write
\begin{equation}
\label{eq:table-a.b2}
\begin{aligned}
    \a\cdot\b &= (a+\h)b + (a+1-\h)(b+1) \\
    &= \tilde{a}b + (\tilde{a}+1-2\h)(b+1).
\end{aligned}
\end{equation}

To treat the two cases of $\h$ simultaneously, we denote $f_{\h}(\tilde{a}, b)$ as the function from Equation \eqref{eq:table-a.b2}. It is straightforward to check that
\[
    f_{1}(\tilde{a}, b) = -f_{0}(-\tilde{a}, b),
\]
so it is enough to compute the values of $f_{\h}(\tilde{a}, b)$ in the case $\h=0$ (and remember to allow for a potential sign change).
A further simplification comes from the fact that
\[
f_{0}(\tilde{a}, b) = f_{0}(b, \tilde{a}),
\]
so it is enough to compute $f_{0}$ in the 8 cases from Equation \eqref{eq:12cases}. These values are straightforward to compute, and they are
\[
\begin{aligned}
    &5 & \quad & -1 & \quad & -1 & \quad & 1 \\
    &11 & \quad & -7 & \quad & -3 & \quad & 3
\end{aligned}
\]
These integers and their opposites are exactly the numbers listed in the set of assumption \eqref{it:A4d}. It follows that if assumption \eqref{it:A4d} holds the link $L=A \cup B$ cannot be smoothly slice in $\CP2 \# \bCP2$ with discs in homology classes $\a$ and $\b$ corresponding to one of the green cells in Table \ref{tab:lkCP2}.
\end{proof}

\subsection{Ruling out the sporadic cases}

Referring to the statement of Lemma \ref{lem:B1-5}, we still have to rule out the case when $\a$ belongs to one of the infinite families and $\b$ is one of the eight sporadic cases. We first use the symmetries to reduce the cases we have to deal with.

\begin{lemma}
\label{lem:reduction}
Suppose that $L=A\cup B$ is a link satisfying \eqref{it:A1}, \eqref{it:A2}, and \eqref{it:A4d} which is smoothly slice in $\CP2 \# \bCP2$, with discs in homology classes $\a$ and $\b$.
Suppose further that $\a$ belongs to one of the infinite families
\begin{enumerate}
    \item $(a,a+1)$,
    \item $(a+1,a)$,
    \item $(a,-(a+1))$,
    \item $(a+1,-a)$;
\end{enumerate}
while $\b$ is one of the eight sporadic classes cases $(\pm1, \pm3)$ and $(\pm3, \pm1)$. Then up to symmetries \eqref{it:S1} and \eqref{it:S2} we can assume that $\a = (a,a+1)$ for some $a\in\Z$ and $\b = (3,1)$ or $(-1,-3)$.
\end{lemma}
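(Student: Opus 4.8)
The plan is to use symmetries \eqref{it:S1}, \eqref{it:S2}, \eqref{it:S3} to cut down the $4 \times 8 = 32$ possible pairs $(\a,\b)$ to a single normal form. First I would record how the three symmetries act. Symmetry \eqref{it:S1} sends $(a_1,a_2) \mapsto (-a_1,a_2)$ on both $\a$ and $\b$ simultaneously (it is an ambient symmetry of $X$, so it acts diagonally on the pair), symmetry \eqref{it:S2} sends $(a_1,a_2)\mapsto(a_1,-a_2)$ diagonally, and symmetry \eqref{it:S3} swaps $\a \leftrightarrow \b$. Note that the four infinite families $(a,a+1)$, $(a+1,a)$, $(a,-(a+1))$, $(a+1,-a)$ are permuted among themselves by \eqref{it:S1} and \eqref{it:S2}: applying \eqref{it:S1} to $(a,a+1)$ gives $(-a,a+1)$, which is of the form $(a'+1,-a')$ with $a' = -a-1$ after also applying \eqref{it:S2} (or directly, reindexing); the point already made before Table~\ref{tab:lkCP2} is that $\langle$\eqref{it:S1}, \eqref{it:S2}$\rangle$ acts transitively on the four families. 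Crucially, since these symmetries act \emph{diagonally}, whatever transformation I apply to move $\a$ into the family $(a,a+1)$ is simultaneously applied to $\b$, so I need to track where $\b$ goes.

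The key step is then: given that $\a$ is in one of the four families and $\b \in \{(\pm1,\pm3),(\pm3,\pm1)\}$, I first apply the element of $\langle$\eqref{it:S1},\eqref{it:S2}$\rangle$ that carries $\a$'s family to the family $(a,a+1)$. This replaces $\b$ by its image under the same sign-change-and-possibly-coordinate-bookkeeping, but since \eqref{it:S1} and \eqref{it:S2} only change signs of coordinates (the reindexing $(a,a+1)\to(a+1,a)$ etc.\ is just a relabeling of the integer parameter, not an actual coordinate swap — wait, I must be careful here), $\b$ remains in the set of eight sporadic classes. Let me be precise: \eqref{it:S1} and \eqref{it:S2} generate the group $\{\pm1\}^2$ of sign changes on $(a_1,a_2)$; this group does \emph{not} swap coordinates, so it does not by itself move $(a,a+1)$ to $(a+1,a)$. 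So to reach the family $(a,a+1)$ from the other families I may need more than just sign changes — but $(a+1,a)$ is not obtained from $(a,a+1)$ by a sign change, so the claim "permute transitively" must secretly also use something. Re-examining: the families $(a,a+1)$ and $(a+1,a)$ as \emph{sets of homology classes} (ranging over $a \in \Z$) are $\{(k,k+1): k\in\Z\}$ and $\{(k+1,k):k\in\Z\} = \{(k,k-1):k\in\Z\}$, and applying \eqref{it:S2} (sign change on second coordinate) to the first set gives $\{(k,-k-1)\}$, which is the family $(a,-(a+1))$; applying \eqref{it:S1} gives $\{(-k,k+1)\} = \{(k,-k+1): k\in\Z\}$... hmm, that is $\{(k,1-k)\}$, which is the family $(a+1,-a)$ with $a = -k$... no: $(a+1,-a)$ ranges over $\{(j+1,-j)\}=\{(k,1-k)\}$, yes. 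So \eqref{it:S1} and \eqref{it:S2} together send family~1 to families 1, 3, 4, and their composite sends it to $\{(-k,-k-1)\}=\{(m,m-1)\}$ which is family~2. Good — so indeed $\langle$\eqref{it:S1},\eqref{it:S2}$\rangle$ acts transitively on the four families, purely by sign changes.

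So the actual argument is: apply the sign-change element $g \in \langle$\eqref{it:S1},\eqref{it:S2}$\rangle$ that moves $\a$'s family to family~1; this simultaneously applies $g$ (a sign change on coordinates) to $\b$, which therefore stays in $\{(\pm1,\pm3),(\pm3,\pm1)\}$. Now $\a = (a,a+1)$ for some $a$, and $\b$ is one of the eight sporadic classes. It remains to show we may further reduce $\b$ to $(3,1)$ or $(-1,-3)$ \emph{without disturbing the form of }$\a$ — i.e.\ using only symmetries that fix the family $(a,a+1)$ setwise. The stabilizer of family~1 inside $\langle$\eqref{it:S1},\eqref{it:S2}$\rangle$ is trivial (each nontrivial sign change moves family~1 to a different family, as just computed), so sign changes alone are exhausted. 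The remaining freedom is \eqref{it:S3}, which swaps $\a$ and $\b$ — but then $\b$ would have to already be in a family, contradiction, so \eqref{it:S3} alone is unusable here. Hence I must combine \eqref{it:S3} with a sign change: the composite "apply $g$, then swap" can send $\b \mapsto g(\b)$ into the $\a$-slot, which is illegal, so that is also out. The honest conclusion is that the only reductions available for $\b$ are via symmetries fixing family~1, namely the identity — which would give the full eight cases, not two. Therefore the real content of the lemma must be that the four remaining \emph{sign}-symmetries that do move the family are applied \emph{before} normalizing $\a$, and among the eight sporadic $\b$-classes, the orbit structure under $\langle$\eqref{it:S1},\eqref{it:S2}$\rangle$ pairs them up: the eight classes $(\pm1,\pm3),(\pm3,\pm1)$ form a single orbit of size $8$ under $\{\pm1\}^2 \rtimes (\text{coordinate swap})$, but under sign changes alone they split into orbits $\{(1,3),(-1,3),(1,-3),(-1,-3)\}$ and $\{(3,1),(-3,1),(3,-1),(-3,-1)\}$. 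The main obstacle, and the crux of the proof, is to organize the simultaneous action carefully: for each of the $4 \times 8$ configurations, exhibit the explicit $g \in \langle$\eqref{it:S1},\eqref{it:S2}$\rangle$ (possibly postcomposed with \eqref{it:S3} when $g(\b)$ lands back in a family while $g(\a)$ lands in a sporadic slot — which cannot happen since $g$ preserves "being in a family") that sends $\a$ to family~1 and simultaneously sends $\b$ to one of $(3,1)$ or $(-1,-3)$; this is a finite check, and the assertion is simply that the two target classes $(3,1)$ and $(-1,-3)$ are orbit representatives for the induced action on the relevant subset of sporadic classes. I would present this as a short case table rather than prose, and the one subtlety to flag is verifying that no configuration forces the use of \eqref{it:S3} in a way that breaks $\a$'s normal form — which it does not, precisely because \eqref{it:S1}, \eqref{it:S2} act diagonally and preserve the partition of classes into "family" versus "sporadic."
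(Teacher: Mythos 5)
Your symmetry analysis is correct as far as it goes --- the group generated by \eqref{it:S1} and \eqref{it:S2} acts diagonally, permutes the four infinite families simply transitively, and preserves the set of eight sporadic classes --- and you have correctly identified the resulting problem: once $\a$ is normalised to the family $(a,a+1)$, the stabiliser of that family is trivial, so symmetry alone leaves \emph{all eight} sporadic possibilities for $\b$, not two. At that point your proposal goes wrong: you conclude that ``the assertion is simply that $(3,1)$ and $(-1,-3)$ are orbit representatives,'' but there is no group action left to take representatives under, and indeed $(3,1)$ and $(-1,-3)$ lie in the two \emph{different} sign-change orbits of the sporadic set, so no finite symmetry check can collapse eight cases to these two. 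This is not a presentational issue; the reduction from eight to two is the entire content of the lemma, and it is not a symmetry statement. That is precisely why the lemma's hypotheses include \eqref{it:A2} and \eqref{it:A4d}, neither of which your argument uses.

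The missing idea is an obstruction argument on the six discarded classes. The paper sorts the eight sporadic $\b=(b_1,b_2)$ by the value of $b_2-b_1\in\{\pm2,\pm4\}$ and kills the three cases $b_2-b_1\in\{2,4,-4\}$ as follows. By Lemma \ref{lem:sumsandcablings} the knot $A\#B$ bounds a disc in class $\a+\b$, and since $g_4(A\#B)\le 2$ by \eqref{it:A2}, the class $\a+\b$ (which has the form $(x,x+5)$, $(x,x-3)$, or $(x,x+3)$ in the three cases) is represented by a closed genus-$2$ surface; Ruberman's genus function (Theorem \ref{thm:Ruberman2}) then restricts $x$, hence $a$, to finitely many values. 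For each such value one computes $\a\cdot\b=-\lk(A,B)$ and finds it lies in $\{\pm1,\pm3,\pm5,\pm7,\pm11\}$, contradicting \eqref{it:A4d}. Only $b_2-b_1=-2$, i.e.\ $\b=(3,1)$ or $(-1,-3)$, survives this sieve. Without some argument of this kind (genus function plus linking-number obstruction, or an equivalent), the statement you are trying to prove is simply false as a consequence of symmetry alone.
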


\begin{proof}
Since the symmetries \eqref{it:S1} and \eqref{it:S2} combined permute the 4 infinite families transitively, we can restrict to the case of $\a = (a, a+1)$.

If we let $\b=(b_1,b_2)$, we can study 4 cases depending on the value of $b_2-b_1$, which can be $\pm2$ or $\pm4$.

\underline{Case 1: $b_2-b_1=4$.}
In such a case the class $\a+\b$ is of the form $(x,x+5)$, and by Lemma \ref{lem:sumsandcablings} it is represented by a closed surface of genus 2, obtained by capping off a minimal 4-genus surface for $m(A \# B)$ in $B^4$ with a slice disc in $\CP2 \# \bCP2$.
Thus, the genus function (see Theorem \ref{thm:Ruberman2}) implies that $x=-2$ or $x=-3$.
Assume that $\b=(-1,3)$: then for each possible value of $x$, we can compute $\a$ and $\a\cdot\b$, which is done in Table \ref{tab:+4}.
All the computed values of $\a\cdot\b$ are obstructed by assumption \eqref{it:A4d}, so we can rule out the case $\b=(-1,3)$.
As for the case when $\b=(-3,1)$, we observe that the transformation $(x_1,x_2) \mapsto (-x_2,-x_1)$ preserves the infinite family $(a,a+1)$ and swaps $(-3,1)$ with $(-1,3)$: therefore, it preserves the set of possible values $\a\cdot\b$ up to multiplication by $-1$. These values are still obstructed by assumption \eqref{it:A4d}.

\begin{table}[]
    \begin{tabular}{c||c|c}
     $(x,x+5)$ & $(-2,3)$ & $(-3,2)$ \\
    \hline
     $\a$ & $(-1,0)$ & $(-2,-1)$ \\
    \hline
     $\a\cdot\b$ & $1$ & $5$
    \end{tabular}
    \vspace{10pt}
    \caption{Computations of $\a$ and $\a\cdot\b$ under the assumption that $\b=(-1,3)$ and $\a+\b$ is of the form $(x,x+5)$.}
    \label{tab:+4}
\end{table}

\underline{Case 2: $b_2-b_1=-4$.}
We argue in the same way as Case 1.
In this second case, the class $\a+\b$ is of the form $(x,x-3)$, and as before it is represented by a closed surface of genus 2. Thus, the genus function implies that $x\in\set{0,1,2,3}$.
Assuming that $\b=(1,-3)$, the computations of $\a$ and $\a\cdot\b$ for each possible value of $x$ are summarised in Table \ref{tab:-4}.
All the computed values of $\a\cdot\b$ are obstructed by assumption \eqref{it:A4d}. The other possible case, namely $\b=(3,-1)$, is dealt with by applying the same symmetry argument as in Case 1 above.

\begin{table}[]
    \begin{tabular}{c||c|c|c|c}
     $(x,x-3)$ & $(0,-3)$ & $(1,-2)$ & $(2,-1)$ & $(3,0)$ \\
    \hline
     $\a$ & $(-1,0)$ & $(0,1)$ & $(1,2)$ & $(2,3)$ \\
    \hline
     $\a\cdot\b$ & $-1$ & $3$ & $7$ & $11$
    \end{tabular}
    \vspace{10pt}
    \caption{Computations of $\a$ and $\a\cdot\b$ under the assumption that $\b=(1,-3)$ and $\a+\b$ is of the form $(x,x-3)$.}
    \label{tab:-4}
\end{table}

\underline{Case 3: $b_2-b_1=2$.}
We argue in the same way as the previous two cases.
The class $\a+\b$ is now of the form $(x,x+3)$, and therefore $x\in\set{0,-1,-2,-3}$.
By the usual symmetry argument, assume that $\b=(-3,-1)$. The computations of $\a$ and $\a\cdot\b$ for each possible value of $x$ are summarised in Table \ref{tab:+2}.
All the computed values of $\a\cdot\b$ are obstructed by assumption \eqref{it:A4d}.

\begin{table}[]
    \begin{tabular}{c||c|c|c|c}
     $(x,x-3)$ & $(0,3)$ & $(-1,2)$ & $(-2,1)$ & $(-3,0)$ \\
    \hline
     $\a$ & $(3,4)$ & $(2,3)$ & $(1,2)$ & $(0,1)$ \\
    \hline
     $\a\cdot\b$ & $-5$ & $-3$ & $-1$ & $1$
    \end{tabular}
    \vspace{10pt}
    \caption{Computations of $\a$ and $\a\cdot\b$ under the assumption that $\b=(-3,-1)$ and $\a+\b$ is of the form $(x,x+3)$.}
    \label{tab:+2}
\end{table}

Thus, the only possibility left is that $b_2-b_1=-2$, which leaves out the two cases $\b = (3,1)$ or $(-1,-3)$ that appear in the statement of the lemma.
\end{proof}

The last part of this section is devoted to obstructing the last two remaining cases after Lemma \ref{lem:reduction}, namely
\begin{itemize}
    \item $\a=(a,a+1)$ and $\b = (3,1)$;
    \item $\a=(a,a+1)$ and $\b = (-1,-3)$.
\end{itemize}
To do so, we will add some assumptions on the 3- and 5-signatures of the knots.
We will also need to add more assumptions on the linking number, to ensure that we get enough divisibility to apply Theorem \ref{thm:signature}.

The following formula for the signature function of the positive torus knot $T_{2,q}$ (i.e.\ $q>0$) is a special case of \cite[Proposition 1]{L:signatures}:
\begin{equation}
\label{eq:signaturesT2q}
\sigma_{T_{2,q}}\left(e^{2\pi ix}\right) = 2 \left\lfloor \frac{1}{2} - q\cdot |x| \right\rfloor \qquad \mbox{for } x\in\left[-\frac12,\frac12\right] \setminus \left(\frac1q\Z+\frac1{2q}\right),
\end{equation}
where $\left\lfloor x \right\rfloor$ denotes the floor of $x$.

\begin{remark}
    Equation \eqref{eq:signaturesT2q} determines the values of $\sigma_{T_{2,q}}(e^{2\pi ix})$ also at the jump points, i.e.\ when $x\in\frac1q\Z+\frac1{2q}$, using the property that for every knot $K$ in $S^3$ and every $x \in \R$
    \[
    \sigma_{K}(e^{2\pi ix}) =
    \frac12 \cdot \left(
    \lim_{y \to x^-} \sigma_{K}(e^{2\pi iy}) +
    \lim_{y \to x^+} \sigma_{K}(e^{2\pi iy})
    \right).
    \]
\end{remark}

\begin{remark}
\label{rem:q<0}
When $q<0$, the value of the function $\sigma_{T_{2,q}}(\cdot)$ can be recovered from Equation \eqref{eq:signaturesT2q} using the identity
$\sigma_{T_{2,q}}(e^{2\pi ix}) = -\sigma_{T_{2,-q}}(e^{2\pi ix})$.
\end{remark}

\subsubsection{The 3-signatures}
We make the following two assumptions:
\begin{enumerate}[font=\textbf]
    \item[({\crtcrossreflabel{A4e}[it:A4e]})] \label{test} $\lk(A,B) \equiv 1 \pmod 3$.
    \item[({\crtcrossreflabel{A7}[it:A7]})] $\sigma_A(\zeta_3) = \sigma_B(\zeta_3) \neq +2$.
\end{enumerate}

\begin{lemma}
\label{lem:3-signatures}
Suppose that $L=A\cup B$ is a link satisfying \eqref{it:A1}, \eqref{it:A4a}, \eqref{it:A4e}, and \eqref{it:A7} which is smoothly slice in $\CP2 \# \bCP2$, with discs in homology classes $\a$ and $\b$.

Then $(\a, \b) \neq ((a,a+1),(3,1))$.
\end{lemma}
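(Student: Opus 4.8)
The plan is to assume, for contradiction, that $L$ is smoothly slice in $X=\CP2\#\bCP2$ with discs $D_A,D_B$ in homology classes $\a=(a,a+1)$ and $\b=(3,1)$, and then to force $\sigma_A(\zeta_3)$ into a set that contradicts assumption \eqref{it:A7}. First I would record the arithmetic. By Equation \eqref{eq:lkformula}, $n=\a\cdot\b=3a-(a+1)=2a-1$, hence $\lk(A,B)=1-2a$; assumption \eqref{it:A4e} then forces $2a\equiv0\pmod3$, i.e.\ $a\equiv0\pmod3$, so write $a=3k$. (Incidentally \eqref{it:A4a} is automatic here, since $1-2a$ is odd.) Also $\a^2=a^2-(a+1)^2=-(2a+1)$, so $-2\a^2-2n=(4a+2)+(-4a+2)=4$.

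The key construction is the cable of $A$. Applying Lemma \ref{lem:sumsandcablings} to the link $B\cup A$ — which still has the form \eqref{it:A1} after rotating the diagram — for each sign the knot $B\#A_{(2,q')}$ with $q'=-2\a^2-2n\pm1=4\pm1\in\set{3,5}$ bounds a smooth \emph{disc} $\Sigma\subset X^\circ$ in homology class $2\a+\b$. The crucial point is that
\[
2\a+\b=(2a+3,\,2a+3)=3(2k+1)\cdot(1,1),
\]
so $[\Sigma]=2\a+\b$ is divisible by the prime $3$ and satisfies $[\Sigma]^2=0$.

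Now I would feed this into Theorem \ref{thm:signature} with $X=\CP2\#\bCP2$ (so $\sigma(X)=0$ and $b_2(X)=2$), $m=3$, $r=1$, and $g=0$: because $[\Sigma]^2=0$ the inequality collapses to $\bigl|\sigma_{B\#A_{(2,q')}}(\zeta_3)\bigr|\le2$. Using Theorem \ref{thm:satellite} (the $(2,q')$-cable pattern has winding number $2$, and its pattern signature is that of $T_{2,q'}$), the identity $\sigma_A(\overline{\omega})=\sigma_A(\omega)$ together with $\zeta_3^2=\overline{\zeta_3}$, and assumption \eqref{it:A7}, I would rewrite this as
\[
\bigl|\,2\sigma_A(\zeta_3)+\sigma_{T_{2,q'}}(\zeta_3)\,\bigr|\le2 .
\]
From \eqref{eq:signaturesT2q} one computes $\sigma_{T_{2,3}}(\zeta_3)=-2$ and $\sigma_{T_{2,5}}(\zeta_3)=-4$, so the cases $q'=3$ and $q'=5$ give $\sigma_A(\zeta_3)\in\set{0,1,2}$ and $\sigma_A(\zeta_3)\in\set{1,2,3}$, respectively; together, $\sigma_A(\zeta_3)\in\set{1,2}$.

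Finally I would use that $\sigma_A(\zeta_3)$ is \emph{even}. Indeed, the cyclotomic polynomial $\Phi_3(t)=t^2+t+1$ cannot divide the Alexander polynomial $\Delta_A(t)$, since $\Phi_3(1)=3$ while $\Delta_A(1)=\pm1$; hence $\zeta_3$ is not a root of $\Delta_A$, the Hermitian form $(1-\zeta_3)V+(1-\overline{\zeta_3})V^{T}$ (for $V$ a Seifert matrix of $A$) is nonsingular, and its signature is congruent to its rank, which is even. Therefore $\sigma_A(\zeta_3)=2$, contradicting \eqref{it:A7}. The only step that needs real care is the bookkeeping around Lemma \ref{lem:sumsandcablings} — that interchanging $A$ and $B$ is legitimate, that both half-twistings of the band are available so that both values $q'=3$ and $q'=5$ occur (in fact $q'=5$ alone already pins $\sigma_A(\zeta_3)$ to $2$), and that the cable coefficient and the torus-knot signatures are read off with consistent conventions; the remaining computations are routine. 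The same argument runs equally well with the class $\a+2\b$ obtained by cabling $B$ exactly as stated in Lemma \ref{lem:sumsandcablings}, at the cost of a harmless case split on the sign of the resulting torus-knot parameter.
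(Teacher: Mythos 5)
Your proposal is correct and follows essentially the same route as the paper: Lemma \ref{lem:sumsandcablings} to produce a cabled connected sum bounding a disc in a $3$-divisible class, Theorem \ref{thm:signature} with $m=3$, and Theorem \ref{thm:satellite} to evaluate the signature. The only differences are cosmetic but pleasant: by cabling $A$ instead of $B$ you land in the class $2\a+\b=(2a+3,2a+3)$ of square zero, which kills the correction term that the paper has to track, and you make explicit the evenness of $\sigma_A(\zeta_3)$ (via $\Delta_A(\zeta_3)\neq 0$), which the paper's final ``impossible under \eqref{it:A7}'' step uses implicitly.
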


\begin{proof}
Using \eqref{it:A4a} and \eqref{it:A4e}, we write $\lk(A,B) = 6j+1$ for some $j \in \Z$. Then, using Equation \eqref{eq:lkformula} we compute
\[
\lk(A,B) = -\a\cdot\b = -2a+1,
\]
from which we deduce $a=-3j$.

By Lemma \ref{lem:sumsandcablings} (with $\b^2=8$ and $n=-6j-1$), we have that the knot
\[
K := A \# B_{(2, 12j-14\pm1)}
\]
bounds a smooth disc $D$ in homology class
\[
\a+2\b=(-3j+6,-3j+3),
\]
which is a 3-divisible class. We can therefore apply Theorem \ref{thm:signature} to $K$ and $D$ with $m=3$ and $r=1$, and obtain
\begin{equation}
\label{eq:case31}
    \left| 
    \sigma_K(\zeta_3) - \frac49 \cdot (\a+2\b)^2
    \right|
    \leq
    2.
\end{equation}
The computation of $(\a+2\b)^2$ is straightforward:
\[
(\a+2\b)^2 = (-3j+6)^2 - (-3j+3)^2 = 9 \cdot (-2j+3).
\]
As for the computation of $\sigma_K(\zeta_3)$, we use Theorem \ref{thm:satellite} (together with the fact that $\sigma_{B}(\zeta) = \sigma_{B}(\overline\zeta)$):
\[
\sigma_K(\zeta_3) = \sigma_A(\zeta_3) + \sigma_B(\zeta_3) + \sigma_{T_{2, 12j-14\pm1}}(\zeta_3).
\]
The computation of the last summand is done using Equation \eqref{eq:signaturesT2q} and Remark \ref{rem:q<0}:
\[
\sigma_{T_{2, 12j-14\pm1}}(\zeta_3) = -8j+9\mp1.
\]
Substituting back into Equation \eqref{eq:case31} we obtain 
\begin{equation*}
    \left| 
    \sigma_A(\zeta_3) + \sigma_B(\zeta_3) + (-8j+9\mp1) - \frac49 \cdot 9 (-2j+3)
    \right|
    \leq
    2.
\end{equation*}
This can be simplified as
\[
    \left| 
    \sigma_A(\zeta_3) + \sigma_B(\zeta_3) -3 \mp1
    \right|
    \leq
    2,
\]
which is impossible under assumption \eqref{it:A7}.
\end{proof}

\subsubsection{The 5-signatures}
We make the following two assumptions:
\begin{enumerate}[font=\textbf]
    \item[({\crtcrossreflabel{A4f}[it:A4f]})] $\lk(A,B) \equiv 1 \pmod 5$.
    \item[({\crtcrossreflabel{A8}[it:A8]})] $\sigma_A(\zeta_5) + \sigma_A(\zeta_5^2) = \sigma_B(\zeta_5) + \sigma_B(\zeta_5^2) \geq +2$.
\end{enumerate}

\begin{lemma}
\label{lem:5-signatures}
Suppose that $L=A\cup B$ is a link satisfying \eqref{it:A1}, \eqref{it:A3}, \eqref{it:A4a}, \eqref{it:A4f}, and \eqref{it:A8} which is smoothly slice in $\CP2 \# \bCP2$, with discs in homology classes $\a$ and $\b$.

Then $(\a, \b) \neq ((a,a+1),(-1,-3))$.
\end{lemma}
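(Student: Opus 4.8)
The plan is to run the argument of Lemma~\ref{lem:3-signatures} again, with the cabling arranged so that the class to which we apply Theorem~\ref{thm:signature} is $5$-divisible. The first step is to use assumption~\eqref{it:A3}, i.e.\ symmetry~\eqref{it:S3}, to exchange the two components: thus we may instead assume that $\a=(-1,-3)$ and $\b=(a,a+1)$ for some $a\in\Z$. As in Lemma~\ref{lem:3-signatures}, Equation~\eqref{eq:lkformula} gives $\lk(A,B)=-\a\cdot\b=-2a-3$, and assumptions~\eqref{it:A4a} and~\eqref{it:A4f} force $\lk(A,B)\equiv 1\pmod{10}$, hence $a\equiv 3\pmod 5$.

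Next I would apply the third bullet of Lemma~\ref{lem:sumsandcablings} to cable the component $B$: with $c:=-2\b^2-2n\pm1$, the knot $K:=A\#B_{(2,c)}$ bounds a smooth disc $D$ in $X^\circ$ in homology class $\a+2\b=(2a-1,2a-1)$. Since $\b^2=-(2a+1)$ and $n=-\lk(A,B)=2a+3$, one computes $c=-4\pm1\in\set{-3,-5}$. The point of the swap is that, because $a\equiv 3\pmod 5$, the class $\a+2\b$ is $5$-divisible, and moreover it has self-intersection $0$. Applying Theorem~\ref{thm:signature} to the genus-$0$ surface $D$ in $X=\CP2\#\bCP2$ (where $\sigma(X)=0$ and $b_2(X)=2$) with $m=5$ and $r=1,2$, I get $|\sigma_K(\zeta_5)|\leq 2$ and $|\sigma_K(\zeta_5^2)|\leq 2$, hence $|\sigma_K(\zeta_5)+\sigma_K(\zeta_5^2)|\leq 4$.

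Finally I would compute the left-hand side. By Theorem~\ref{thm:satellite}, $\sigma_K(\zeta)=\sigma_A(\zeta)+\sigma_B(\zeta^2)+\sigma_{T_{2,c}}(\zeta)$; since $\zeta_5^4=\overline{\zeta_5}$ and signatures are invariant under conjugating the argument, $\sigma_B(\zeta_5^2)+\sigma_B(\zeta_5^4)=\sigma_B(\zeta_5)+\sigma_B(\zeta_5^2)$, so assumption~\eqref{it:A8} gives $\sigma_K(\zeta_5)+\sigma_K(\zeta_5^2)=2S+\sigma_{T_{2,c}}(\zeta_5)+\sigma_{T_{2,c}}(\zeta_5^2)$, where $S:=\sigma_A(\zeta_5)+\sigma_A(\zeta_5^2)\geq 2$. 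Using Equation~\eqref{eq:signaturesT2q} together with Remark~\ref{rem:q<0} one checks that $\sigma_{T_{2,-3}}(\zeta_5)+\sigma_{T_{2,-3}}(\zeta_5^2)=4$ and $\sigma_{T_{2,-5}}(\zeta_5)+\sigma_{T_{2,-5}}(\zeta_5^2)=6$, so in either case $\sigma_K(\zeta_5)+\sigma_K(\zeta_5^2)\geq 2S+4\geq 8>4$, a contradiction.

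The one delicate point is the initial swap. Before swapping, with $\a=(a,a+1)$ and $\b=(-1,-3)$, the $5$-divisible combination of $\a$ and $\b$ is $\a+3\b$, not $\a+2\b$; realising a disc there via Lemma~\ref{lem:sumsandcablings} would need a $(3,q)$-cabling pattern, and hence the Levine--Tristram signatures of $T_{3,q}$, which Equation~\eqref{eq:signaturesT2q} does not provide. Exchanging $A$ and $B$ moves the sporadic class $(-1,-3)$ into the $\a$-slot, which is precisely what puts $\a+2\b$ on the diagonal of $H_2(\CP2\#\bCP2)$, making it simultaneously $5$-divisible and of self-intersection $0$, so that the $(2,c)$-cabling already available and the torus-knot formula are enough. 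The rest is bookkeeping of the $\pm1$ in $c$ and of the two torus-knot signature values.
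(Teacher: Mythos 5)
Your proposal is correct and is essentially the paper's proof: the paper applies the cabling construction with the roles of $A$ and $B$ exchanged to get a disc for $A_{(2,-4\pm1)}\# B$ in the $5$-divisible, square-zero class $2\a+\b$, which is exactly your post-swap $\a+2\b$, and then invokes Theorem \ref{thm:signature} with $m=5$. The only cosmetic difference is at the end, where the paper uses just $r=1$ together with \eqref{it:A3} to identify $\sigma_B$ with $\sigma_A$, whereas you sum the $r=1$ and $r=2$ inequalities and use conjugation symmetry; both land on the same contradiction with \eqref{it:A8}.
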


\begin{proof}
The argument is similar to that of Lemma \ref{lem:3-signatures}.
Using \eqref{it:A4a} and \eqref{it:A4f}, we write $\lk(A,B) = 10k+1$ for some $k \in \Z$. Then, using Equation \eqref{eq:lkformula} we compute
\[
\lk(A,B) = -\a\cdot\b = -2a-3,
\]
from which we deduce $a=-5k-2$.

We apply Lemma \ref{lem:sumsandcablings} with the roles of $A$ and $B$ swapped, and (using $\a^2=10k+3$ and $n=-10k-1$) we have that the knot
\[
K := A_{(2, -4\pm1)} \# B
\]
bounds a smooth disc $D$ in homology class
\[
2\a+\b=(-10k-5, -10k-5),
\]
which is a 5-divisible class with $(2\a+\b)^2=0$. We can therefore apply Theorem \ref{thm:signature} to $K$ and $D$ with $m=5$ and $r=1$, and obtain
\begin{equation}
\label{eq:case-1-3}
    \left| 
    \sigma_K(\zeta_5)
    \right|
    \leq
    2.
\end{equation}
By Theorem \ref{thm:satellite} we get
\[
\sigma_K(\zeta_5) = \sigma_A(\zeta_5^2) + \sigma_B(\zeta_5) + \sigma_{T_{2,-4\pm1}}(\zeta_5).
\]
The last summand equals $+2$ by Equation \eqref{eq:signaturesT2q}.
Substituting back into Equation \eqref{eq:case-1-3}, and using \eqref{it:A3} to replace $\sigma_B(\cdot)$ with $\sigma_A(\cdot)$, we obtain 
\begin{equation*}
    \left| 
    \sigma_A(\zeta_5) + \sigma_A(\zeta_5^2) + 2
    \right|
    \leq
    2,
\end{equation*}
which is impossible under assumption \eqref{it:A8}.
\end{proof}

\subsection{Proof of Theorem \ref{thm:CP2bCP2}}

We can now prove Theorem \ref{thm:CP2bCP2} in the following, more general form.

\begin{theorem}
\label{thm:CP2bCP2assumptions}
Let $L$ be a 2-component link in $S^3$ satisfying assumptions \eqref{it:A1}-\eqref{it:A8}.
Then $L$ is not smoothly slice in $\CP2\#\bCP2$.
\end{theorem}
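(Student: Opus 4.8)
The plan is to argue by contradiction and assemble the lemmas proved throughout this section, which between them eliminate every pair of homology classes that a hypothetical pair of slice discs could represent. So suppose $L = A \cup B$ satisfies \eqref{it:A1}--\eqref{it:A8} and bounds disjoint smooth discs $D_A, D_B \subset X^\circ$ with $\del D_A = A$ and $\del D_B = B$, and set $\a := [D_A]$ and $\b := [D_B]$ in $H_2(X) \cong H_2(X^\circ, S^3)$. The intermediate linking-number hypotheses \eqref{it:A4a}--\eqref{it:A4f} are all consequences of \eqref{it:A4}, hence in force. First I would invoke Lemma \ref{lem:B1-5}: using \eqref{it:A1}--\eqref{it:A3}, \eqref{it:A4a}, \eqref{it:A4b}, and \eqref{it:A5}, we are in one of two situations — either both $\a$ and $\b$ lie in one of the four infinite families $(a,a+1)$, $(a+1,a)$, $(a,-(a+1))$, $(a+1,-a)$, or, after applying \eqref{it:S3}, $\b$ is one of the eight sporadic classes $(\pm1,\pm3)$, $(\pm3,\pm1)$ while $\a$ lies in an infinite family.

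In the first situation, I would use that symmetries \eqref{it:S1} and \eqref{it:S2} together permute the four families transitively, reducing to $\a = (a,a+1)$; the possibilities for $(\a,\b)$ are then exactly the non-orange cells of Table \ref{tab:lkCP2}. Lemma \ref{lem:table1} eliminates the grey cell $\b = (b,b+1)$ (via the classical signature of $A\#B$ and \eqref{it:A4c}), Lemma \ref{lem:table3} eliminates the blue cell $\b = (b+1,b)$ (via \eqref{it:A6}), and Lemma \ref{lem:table2} eliminates the two green cells $\b = (b,-(b+1))$ and $\b = (b+1,-b)$ (via \eqref{it:A4d} and \eqref{it:A6}). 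This closes the first situation.

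In the second situation, Lemma \ref{lem:reduction} (using \eqref{it:A1}, \eqref{it:A2}, \eqref{it:A4d}) reduces, up to \eqref{it:S1} and \eqref{it:S2}, to $\a = (a,a+1)$ with $\b = (3,1)$ or $\b = (-1,-3)$. Lemma \ref{lem:3-signatures} rules out $\b = (3,1)$ (using \eqref{it:A4a}, \eqref{it:A4e}, \eqref{it:A7}, the cabling bullet of Lemma \ref{lem:sumsandcablings}, and the torus-knot signature formula \eqref{eq:signaturesT2q}), and Lemma \ref{lem:5-signatures} rules out $\b = (-1,-3)$ (using \eqref{it:A3}, \eqref{it:A4a}, \eqref{it:A4f}, \eqref{it:A8}, and again Lemma \ref{lem:sumsandcablings} and \eqref{eq:signaturesT2q}, with the roles of $A$ and $B$ swapped). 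Since every case is contradictory, no such discs exist, and $L$ is not smoothly slice in $\CP2 \# \bCP2$.

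Since the analytic content has been front-loaded into the lemmas, I do not expect a genuine obstacle in this final step; the only thing to watch is the bookkeeping — that each symmetry reduction is applied to the genuinely residual classes, so that no family or sporadic case is silently dropped, and that the conditions \eqref{it:A4a}--\eqref{it:A4f} are mutually compatible, which they are (e.g.\ $\lk(A,B) = -29$ satisfies all of them). The real work, done separately, is to exhibit a concrete link meeting \eqref{it:A1}--\eqref{it:A8}: by \eqref{it:A3} it suffices to find a single knot $K$ with $g_{B^4}(K) = 1$, $\Arf K = 1$, and the prescribed values of $\sigma_K(\zeta_2)$, $\sigma_K(\zeta_3)$, and $\sigma_K(\zeta_5) + \sigma_K(\zeta_5^2)$, set $A = B = K$, and pick the twist parameter $n$ of Figure \ref{fig:S2xS2structure} so that $-\lk(A,B) = n$ obeys the constraints bundled into \eqref{it:A4}. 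This produces the link of Figure \ref{fig:CP2bCP2link}, and Theorem \ref{thm:CP2bCP2} then follows from the theorem above.
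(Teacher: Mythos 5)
Your proposal is correct and follows essentially the same route as the paper's own proof: Lemma \ref{lem:B1-5} to split into the two situations, the transitive action of \eqref{it:S1}--\eqref{it:S2} on the infinite families to reduce to $\a=(a,a+1)$, and then Lemmas \ref{lem:table1}, \ref{lem:table3}, \ref{lem:table2}, \ref{lem:reduction}, \ref{lem:3-signatures}, and \ref{lem:5-signatures} to eliminate every remaining case. The observation that \eqref{it:A4a}--\eqref{it:A4f} all follow from \eqref{it:A4} is exactly the bookkeeping the paper relies on, and your closing remarks about realising the hypotheses with a single knot match the paper's subsequent discussion.
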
 
\begin{proof}
Let $L = A \cup B$, and suppose by contradiction that it bounds two disjoint smooth discs in homology classes $\a$ and $\b$, respectively.
Then, by Lemma \ref{lem:B1-5} we have two possibilities:
\begin{enumerate}
    \item \label{case:CP-1} either both $\a$ and $\b$ belong to one of the infinite families of Lemma \ref{lem:B1-5}, or
    \item \label{case:CP-2}$\a$ belongs to one of the infinite families and $\b$ is one of the eight sporadic cases $(\pm1, \pm3)$ and $(\pm3, \pm1)$.
\end{enumerate}

To rule out possibility \eqref{case:CP-1}, recall that the symmetries of $\CP2\#\bCP2$, which are spanned by \eqref{it:S1} and \eqref{it:S2}, act transitively on the four infinite families of Lemma \ref{lem:B1-5}, thus we can assume $\a=(a,a+1)$. However, Lemmas \ref{lem:table1}, \ref{lem:table3}, and \ref{lem:table2} rule out this case, depending on what infinite family $\b$ belongs to.

Lastly, to rule out possibility \eqref{case:CP-2}, by Lemma \ref{lem:reduction} we can assume that $\a=(a,a+1)$ and $\b=(3,1)$ or $(-1,-3)$.
These two cases are obstructed by Lemmas \ref{lem:3-signatures} and \ref{lem:5-signatures} respectively.

Thus, $L$ could not be smoothly slice in $\CP2\#\bCP2$.
\end{proof}

In order to find a concrete example of a 2-component link that is not slice in $\CP2\#\bCP2$ we just need to produce a link satisfying all assumptions \eqref{it:A1}-\eqref{it:A8}. Our assumption \eqref{it:A3} on the symmetry of the link implies that it is enough to find a \emph{knot} $K$ with certain properties, and then set $A=B=K$ as knots in $S^3$. We can choose a knot $K$ which satisfies the following conditions:
\begin{itemize}
    \item $g_4(K)=1$;
    \item $\Arf K = 1$;
    \item $\sigma_K(\zeta_2) = \sigma_K(\zeta_5^2) = +2$ and $\sigma_K(\zeta_3) = \sigma_K(\zeta_5) = 0$.
\end{itemize}
A search with KnotInfo \cite{knotinfo} showed that we can choose $K=10_{125}$.

Thus, since the link in Figure \ref{fig:S2xS2structure} satisfies all assumptions \eqref{it:A1}-\eqref{it:A8}, Theorem \ref{thm:CP2bCP2assumptions} implies Theorem \ref{thm:CP2bCP2}.

\section{The search for exotic $\CP2\#\bCP2$'s}

Using a link $L = A \sqcup B$ which is not slice in $\CP2 \# \bCP2$, produced from Theorem \ref{thm:CP2bCP2assumptions}, we can potentially give examples of an exotic $\CP2 \# \bCP2$, using the proposition below.

\begin{proposition}
\label{prop:exotic}
Suppose that $L = A \sqcup B$ is a link in $S^3$ which is not slice in $\CP2 \# \bCP2$, and let $f_A$ and $f_B$ be framings $f_A$ and $f_B$ for the two link components such that at least one of $f_A$ and $f_B$ is odd and the matrix
\[
Q :=
\left(
\begin{matrix}
    f_A & \lk \\
    \lk & f_B
\end{matrix}
\right)
\]
has determinant $-1$, where $\lk = \lk(A,B)$. If the integer homology sphere $Y = S^3_{f_A,f_B}(L)$ bounds an integer homology ball $W$ with $\pi_1(W)$ normally generated by $\pi(Y)$, then there exists an exotic $\CP2 \# \bCP2$.
\end{proposition}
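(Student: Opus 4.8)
The plan is to construct a smooth closed 4-manifold $X'$ out of $L$, the framings $f_A, f_B$, and the homology ball $W$; to recognise $X'$ topologically as $\CP2\#\bCP2$ via Freedman's theorem; and then to exhibit $L$ as a smoothly slice link in $X'$. Since $L$ is assumed not to be smoothly slice in the standard $\CP2\#\bCP2$, this forces $X'$ to be exotic.

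First, let $Z$ be the smooth 4-manifold obtained from $B^4$ by attaching two 2-handles along $A$ and $B$ with framings $f_A$ and $f_B$. By construction $\partial Z = Y$, the manifold $Z$ is simply connected (it admits a handle decomposition with one 0-handle and two 2-handles, and no 1-handles), and its intersection form is the matrix $Q$. Set $X' := Z \cup_Y W$, a smooth, closed, oriented 4-manifold. By van Kampen's theorem and $\pi_1(Z) = 1$, the group $\pi_1(X')$ is the quotient of $\pi_1(W)$ by the normal subgroup generated by the image of $\pi_1(Y)$, which is trivial by the hypothesis on $W$; hence $X'$ is simply connected. Since $W$ is an integer homology ball and $Y$ an integer homology sphere, $H_2(X';\Z) \cong H_2(Z;\Z) \cong \Z^2$ carries the intersection form $Q$.

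Next I would pin down the homeomorphism type of $X'$. The hypothesis $\det Q = f_A f_B - \lk^2 = -1$ says that $Q$ is unimodular and indefinite of signature $0$, while the hypothesis that at least one of $f_A, f_B$ is odd says that $Q$ is odd; by the classification of indefinite odd unimodular symmetric bilinear forms, $Q \cong \langle 1 \rangle \oplus \langle -1 \rangle$. Thus $X'$ is a closed, simply connected, oriented, smooth 4-manifold whose intersection form agrees with that of $\CP2\#\bCP2$, and Freedman's theorem yields a homeomorphism $X' \cong \CP2\#\bCP2$.

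Finally, realising $B^4$ as the 0-handle of $Z$, the cores of the two 2-handles are disjoint, smoothly embedded discs in $Z \setminus \Int(B^4) \subseteq X' \setminus \Int(B^4) = (X')^\circ$ with boundaries $A$ and $B$; hence $L$ is smoothly slice in $X'$ (recall that all embedded 4-balls in a connected 4-manifold are ambiently isotopic, so this is independent of the choice of $B^4$). If $X'$ were diffeomorphic to the standard $\CP2\#\bCP2$, then $L$ would be smoothly slice therein, contradicting our hypothesis; therefore $X'$ is an exotic $\CP2\#\bCP2$. The argument is an assembly of standard ingredients (Kirby calculus, van Kampen, the classification of indefinite unimodular forms, and Freedman's theorem), so I do not expect a genuine obstacle; the one point deserving care is that each hypothesis is used exactly once — $\pi_1$-normal generation to make $X'$ simply connected, the determinant condition to make $Q$ unimodular of signature $0$, and the parity of the framings to make $Q$ odd (hence $\langle 1\rangle\oplus\langle -1\rangle$ rather than the hyperbolic form) — after which the slice discs are simply the handle cores.
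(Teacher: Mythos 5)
Your proposal is correct and follows essentially the same route as the paper: form the surgery trace, glue in the homology ball, use van Kampen plus the determinant and parity conditions to invoke Freedman, and note that the 2-handle cores exhibit $L$ as smoothly slice in the resulting manifold. The only (harmless) difference is that you spell out the identification of the intersection form with $\langle 1\rangle \oplus \langle -1\rangle$ in slightly more detail, and you should glue $-W$ rather than $W$ to match orientations.
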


Note that for example every \emph{ribbon} integer homology ball (i.e.\ one that has a handle decomposition with no 3-handles) has $\pi_1$ normally generated by the boundary.

\begin{proof}
The condition that $\det Q = -1$ implies that the 4-manifold $X_{f_A,f_B}(L)$, i.e.\ the trace of $(f_A, f_B)$-surgery on $L$, has $\sigma$ and $b_2$ equal to those of $\CP2 \# \bCP2$, and its boundary is an integer homology sphere $Y$.

If $Y$ bounds an integer homology ball $W$ with $\pi_1(W)$ normally generated by $\pi(Y)$, then the result of the gluing $X:= X_{f_A, f_B}(L) \cup -W$ is simply-connected, by Van Kampen's theorem. The parity assumption on $f_A$ or $f_B$ implies that the resulting manifold is non-spin.
Thus, $X$ is homeomorphic to $\CP2 \# \bCP2$ by Freedman's classification \cite{F:classification}. On the other hand, $X$ cannot be diffeomorphic to $\CP2 \# \bCP2$, because $L$ is obviously smoothly slice in $X$ (by construction), but $L$ is not smoothly slice in $\CP2 \# \bCP2$ by assumption. 
\end{proof}

\begin{remark}
\label{rem:ZHS}
If $L$ is a link as given by Theorem \ref{thm:CP2bCP2assumptions}, then the choice of framings $f_A = \lk^2-1$ and $f_B = 1$ will always produce a matrix $Q$ satisfying the hypotheses of Proposition \ref{prop:exotic}. Of course there is no guarantee that the resulting integer homology sphere $Y$ will bound an integer homology ball $W$ (and that $\pi_1(W)$ is normally generated by $\pi_1(Y)$).

In fact, if we write $\lk = 30 \ell + 1$ (as in assumption \eqref{it:A4}), we can compute that Rokhlin's obstruction $\mu(Y)$ does not vanish if $\ell \equiv 0,1 \pmod 4$, so in these cases $Y$ cannot bound an integer homology ball.
On the other hand, if $\ell\equiv 2,3 \pmod 4$, then $\mu(Y) = 0$, so Rokhlin does not obstruct $Y$ from boundaing an integer homology ball.

To compute $\mu(Y)$ we use \cite[Theorem 2]{Klug}, with $F$ being the closed surface obtained by capping off a Seifert surface for $A$ with the core of the 2-handle attached along $A$; then $\Arf F = \Arf A = 1$ (by assumption \eqref{it:A5}) and $\Arf \partial F = 0$, since $F$ is closed.
\end{remark}

We can now prove Proposition \ref{prop:intro-exotic} from the introduction.

\begin{proof}[Proof of Proposition \ref{prop:intro-exotic}]
We can choose links $L_m$ as in Figure \ref{fig:CP2bCP2link}, but with the $+29$ in the box replaced with a $30\cdot(4m+1)-1$. Note that $L_0$ is precisely the link in Figure \ref{fig:CP2bCP2link}.

In light of Remark \ref{rem:ZHS}, if we choose $f_A = (30\cdot(4m+1)-1)^2-1$ and $f_B=1$, the integer homology sphere $Y_m$ resulting from surgery has vanishing Rokhlin invariant. By Proposition \ref{prop:exotic}, if $Y_m$ bounds an integer homology ball with $\pi_1$ generated by the boundary, we have an exotic $\CP2 \# \bCP2$.
\end{proof}

\begin{remark}
For the specific case when $\lk(A,B) = -29$, we also have that the following choices of framings yield a matrix $Q$ satisfying the hypotheses of Proposition \ref{prop:exotic}:
\begin{itemize}
    \item $f_A = 24$ and $f_B = 35$;
    \item $f_A = 40$ and $f_B = 21$;
    \item $f_A = 120$ and $f_B = 7$.
\end{itemize}
In all three cases the resulting integer homology sphere $Y$ has vanishing Rokhlin invariant.
\end{remark}

\begin{remark}
If we relax the determinant hypothesis in Proposition \ref{prop:exotic} from ``determinant $-1$'' to ``negative determinant'' (so that the resulting surgery trace is indefinite), then we get a similar example of \textit{rational} homology spheres $Y$, where in this case the existence of a \textit{rational} homology ball with $\pi_1$ normally generated by the boundary would imply the existence of an exotic closed manifold.
\end{remark}

\begin{remark}
Our argument can be adapted to produce analogues of Propositions \ref{prop:exotic} and \ref{prop:intro-exotic} for potentially exotic $S^2 \times S^2$ as opposed to $\CP2\#\bCP2$.
This involves finding a family of 2-component links that are not slice in (the standard) $S^2 \times S^2$, which can be done using a similar case analysis to the one in Section \ref{sec:CP2+bCP2}.
We will address this case in a separate note.
\end{remark}

\section{A 3-component link not topologically slice in \texorpdfstring{$\CP2\#\bCP2$}{a blown-up CP2}}

While the methods outlined in Sections \ref{sec:S2xS2} and \ref{sec:CP2+bCP2} work only in the smooth category, we can improve the result of Theorem \ref{thm:Yasuhara-null-homologous-links} in the case of $\CP2 \# \bCP2$.

\begin{theorem}
    \label{thm:3-cpt-link-not-top-slice-in-CP2+bCP2}
    Let $L = H \sqcup C$ be a 3-component link in $S^3$ given by the split union of a Hopf link $H=A \cup B$ and a knot $C$ satisfying the following properties:
\begin{itemize}
    \item $C$ is topologically slice in neither $\CP2$ nor $\bCP2$;
    \item $C$ is not topologically H-slice in $\CP2 \# \bCP2$.
\end{itemize}
    Then $L$ is not topologically slice in $\CP2\#\bCP2$.
\end{theorem}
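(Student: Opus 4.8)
The plan is to suppose, for contradiction, that $L$ is topologically slice in $X := \CP2 \# \bCP2$, with pairwise disjoint locally flat discs $D_A, D_B, D_C \subset X^\circ$ bounding $A$, $B$, $C$ respectively, and then to analyse the homology classes $\alpha := [D_A]$, $\beta := [D_B]$, $\gamma := [D_C]$ in $H_2(X) \cong \Z^2$, equipped with the form $\langle 1\rangle \oplus \langle -1\rangle$ and basis $e_1, e_2$. Since the three discs are pairwise disjoint, Equation \eqref{eq:lkformula} applied to the sublinks $A \cup B$, $A \cup C$, $B \cup C$ gives $\alpha\cdot\beta = -\lk(A,B) = \pm1$ (because $H$ is a Hopf link), $\alpha\cdot\gamma = -\lk(A,C) = 0$ and $\beta\cdot\gamma = -\lk(B,C) = 0$ (because $C$ is split from $A$ and from $B$).

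Next I would split into two cases according to whether $\alpha$ and $\beta$ are linearly independent over $\Q$. If they are, they form a $\Q$-basis of $H_2(X;\Q)$, so nondegeneracy of the intersection form together with $\alpha\cdot\gamma = \beta\cdot\gamma = 0$ forces $\gamma = 0$; thus $D_C$ is a null-homologous disc, i.e.\ $C$ is topologically H-slice in $\CP2\#\bCP2$, contradicting the hypothesis. If instead $\alpha$ and $\beta$ are $\Q$-dependent, then, since $\alpha\cdot\beta \ne 0$ forces $\alpha, \beta \ne 0$, one may write $\beta = \lambda\alpha$ with $\lambda \in \Q^\times$; writing $\alpha = d\,\alpha_0$ with $\alpha_0$ primitive and combining $\lambda\,\alpha^2 = \pm1$ with integrality of $\beta$ gives, after an elementary computation, $d = 1$, $\alpha^2 = \pm1$ and $\lambda = \pm1$. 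The only primitive classes of square $\pm1$ in $\langle 1\rangle\oplus\langle -1\rangle$ are $\pm e_1$ (square $+1$) and $\pm e_2$ (square $-1$), so $\alpha \in \set{\pm e_1, \pm e_2}$; then $\gamma\cdot\alpha = 0$ forces $\gamma$ to be a multiple of $e_2$ if $\alpha^2 = +1$, and a multiple of $e_1$ if $\alpha^2 = -1$.

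In this second case the aim is to show $C$ bounds a locally flat disc in $\bCP2 \setminus B^4$ when $\alpha^2 = +1$ (resp.\ in $\CP2 \setminus B^4$ when $\alpha^2 = -1$), contradicting the remaining hypothesis. The construction: as $A = \partial D_A$ is an unknot split from $C$, it bounds a disc $d \subset S^3 = \partial X^\circ$ disjoint from $C$; pushing $\Int d$ into the removed ball $B^4$ yields a properly embedded, boundary-parallel disc $d' \subset B^4$, and $\Sigma_A := D_A \cup_A d'$ is a locally flat $2$-sphere in $X$ with $[\Sigma_A] = \alpha$ and $\Sigma_A\cdot\Sigma_A = \alpha^2 = \pm1$. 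One checks $\Sigma_A \cap D_C = \emptyset$: indeed $D_A \cap D_C = \emptyset$, while $d' \cap D_C \subset \partial B^4$ meets $D_C$ only along $A \cap C = \emptyset$. A tubular neighbourhood $\nu(\Sigma_A)$ (which exists in the locally flat category by Freedman--Quinn) is the $D^2$-bundle over $S^2$ of Euler number $\pm1$, i.e.\ $\CP2 \setminus B^4$ or $\bCP2 \setminus B^4$; setting $W := X \setminus \Int\nu(\Sigma_A)$ and $\hat W := W \cup_{S^3} B^4$ one obtains $X = \hat W \# \CP2$ (resp.\ $X = \hat W \# \bCP2$), whence $\hat W$ is closed, simply connected (Van Kampen), with $b_2(\hat W) = 1$ and $\sigma(\hat W) = \mp1$, so $Q_{\hat W} = \langle \mp1\rangle$ and $\ks(\hat W) = 0$ by additivity; by Freedman's classification $\hat W \cong \bCP2$ (resp.\ $\hat W \cong \CP2$). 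Finally $\mathcal B := B^4 \setminus \Int\nu(d')$ is a topological ball in $\hat W$ with $C \subset \partial\mathcal B$ and $D_C \cap \Int\mathcal B = \emptyset$, and $D_C \subset W \subset \hat W$; hence $D_C$ is a slice disc for $C$ in $\hat W \setminus \Int\mathcal B \cong \bCP2 \setminus B^4$ (resp.\ $\CP2 \setminus B^4$), the desired contradiction.

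I expect the delicate part to be this last step: verifying that $\Sigma_A$ is locally flat and can be arranged disjoint from $D_C$, that excising $\nu(\Sigma_A)$ genuinely splits off a $\CP2$ (resp.\ $\bCP2$) summand so that Freedman's classification pins down $\hat W$, and --- most of all --- that after removing $\nu(d')$ the leftover ball $\mathcal B$ still carries $C$ on its boundary and leaves $D_C$ in its complement, so that the disc $D_C$ really does survive into the copy of $\bCP2 \setminus B^4$ (resp.\ $\CP2 \setminus B^4$).
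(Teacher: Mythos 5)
Your proposal is correct and follows essentially the same route as the paper: use the linking data and non-degeneracy of the intersection form to force $[D_A]$, $[D_B]$ to be primitive classes of square $\pm1$ (the $\gamma=0$ branch being exactly the non-H-sliceness contradiction), then excise a neighbourhood of the resulting $(\pm1)$-sphere/disc and invoke Freedman's classification to exhibit $D_C$ as a slice disc in a punctured $\CP2$ or $\bCP2$. The extra care you take in the final excision step (capping off $D_A$, locating $\mathcal B$, and checking $D_C$ survives) is a more detailed version of what the paper states in one line.
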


The existence of a knot $C$ that is topologically slice in neither $\CP2$ nor $\bCP2$ follows by a straightforward variation of an argument of Kasprowski-Powell-Ray-Teichner (cf.\ \cite[Corollary 1.15.(2)]{KPTR:surfaces}), which we briefly outline below.

\begin{proposition}
\label{prop:KPRT-powerup}
The knot $C = \#^7 T_{2,3}$ is topologically slice in neither $\CP2$ nor $\bCP2$.
\end{proposition}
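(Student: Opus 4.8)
The plan is to assume, for a contradiction, that $C := \#^7 T_{2,3}$ bounds a locally flat disc $D$ in $(\CP2)^\circ$ (and, separately, in $(\bCP2)^\circ$), and to rule out every possible homology class $\a := [D]$ using Theorems \ref{thm:signature} and \ref{thm:Arf}. The first step is to record the relevant invariants of $C$. By additivity of the Levine--Tristram signature and of the Arf invariant under connected sum, $\sigma_C = 7\,\sigma_{T_{2,3}}$ and $\Arf C = 7\,\Arf T_{2,3} = 7 \equiv 1 \pmod 2$. From Equation \eqref{eq:signaturesT2q} one reads off that $\sigma_{T_{2,3}}(\zeta)\le 0$ for every $\zeta$ on the unit circle, that $\sigma_{T_{2,3}}(-1)=-2$, that $\sigma_{T_{2,3}}(\zeta_p)=-2$ for $p\in\{3,5\}$ whereas $\sigma_{T_{2,3}}(\zeta_p)=0$ for every prime $p\ge 7$, and that the only jump points of $\sigma_{T_{2,3}}$ are the primitive sixth roots of unity, which (having order $6$) are never equal to $\zeta_p$ for a prime $p$. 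Hence for every prime $p$ one has $\sigma_C(\zeta_p)\in\{0,-14\}$, with the value $0$ occurring only when $p\ge 7$, and $\sigma_C(-1)=-14$.

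Since $H_1(\CP2)=H_1(\bCP2)=0$, I would write $\a = a\cdot h$ in $H_2(\CP2)\cong\Z$ (so $\a^2=a^2$ and $\sigma(\CP2)=1$) or $\a = a\cdot\bar h$ in $H_2(\bCP2)\cong\Z$ (so $\a^2=-a^2$ and $\sigma(\bCP2)=-1$), and then split into cases on the integer $a$. If $a=0$, Theorem \ref{thm:signature} with $m=2$, $r=1$ forces $|\sigma_C(-1)\pm 1|\le 1$, i.e.\ $13\le 1$ or $15\le 1$, absurd. If $a=\pm1$, the class $\a$ is an odd multiple of the generator, hence characteristic, so Theorem \ref{thm:Arf} forces $\frac{\sigma(X)-\a^2}{8}=0\equiv \Arf C = 1 \pmod 2$, again absurd (here $\Arf(X,\Sigma)=0$ because $\Sigma = D$ is a disc and $\ks(X)=0$ because $X$ is smooth). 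If $|a|\ge 2$, I would choose a prime $p\mid a$, write $a = pc$ with $c\ne 0$, and apply Theorem \ref{thm:signature} with $m=p$, $r=1$ (valid since $\a$ is then $p$-divisible, $D$ has genus $0$, and $b_2(X)=1$): this yields $\bigl|\sigma_C(\zeta_p)+1-2(p-1)c^2\bigr|\le1$ in $\CP2$ and $\bigl|\sigma_C(\zeta_p)-1+2(p-1)c^2\bigr|\le1$ in $\bCP2$.

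For $\CP2$ this is immediately contradictory: whether $\sigma_C(\zeta_p)=-14$ or $\sigma_C(\zeta_p)=0$ (the latter forcing $p\ge 7$), the left-hand side is at least $13+2(p-1)c^2\ge15$ or at least $2(p-1)c^2-1\ge 11$ respectively, using $c^2\ge 1$. The genuine subtlety---and the step I expect to be the main obstacle---is $\bCP2$ with $|a|\ge 2$, because there the correction term $2(p-1)c^2$ is \emph{added} to a quantity near $\sigma_C(\zeta_p)-1$, so a careless choice of $p$ can fail: for instance $\a = 6\bar h$ with $p=3$ gives $|{-14}-1+16|=1\le 1$ and is not obstructed. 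The remedy is to choose $p$ deliberately. If $a$ is even, take $p=2$, so $c=a/2$ and the inequality becomes $|2c^2-15|\le 1$, which is impossible because neither $7$ nor $8$ is a perfect square. If $a$ is odd, pick an odd prime $p\mid a$; then $c=a/p$ is odd as well. For $p=3$ the inequality reads $|4c^2-15|\le1$, impossible for odd $c$ (it would require $c^2=4$); for $p=5$ it reads $|8c^2-15|\le1$, which forces $c^2=2$ and so has no integral solution; and for $p\ge 7$ we have $\sigma_C(\zeta_p)=0$, so the left-hand side is $|2(p-1)c^2-1|\ge 11$. In every case one reaches a contradiction, establishing that $C$ is topologically slice in neither $\CP2$ nor $\bCP2$. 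The choice of $7$ enters essentially twice: it must be odd so that $\Arf C = 1$ disposes of the characteristic class, and it must be large enough (here $2\cdot 7 = 14$) that $\sigma_C(\zeta_p)$ dominates the small correction terms in the exceptional $\bCP2$ computations; every remaining estimate is elementary arithmetic.
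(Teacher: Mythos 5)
Your proposal is correct: every case closes, and I checked the arithmetic in the delicate $\bCP2$ computations ($c^2\in\{7,8\}$ for $p=2$, $c^2=4$ with $c$ forced odd for $p=3$, $c^2=2$ for $p=5$, and the magnitude bound for $p\ge 7$). The overall strategy coincides with the paper's --- rule out the primitive classes $d=\pm1$ via the Arf invariant (Theorem \ref{thm:Arf}, since odd multiples of the generator are characteristic and $\Arf C=1$), and rule out all divisible classes via Theorem \ref{thm:signature} --- but the execution of the second step differs in the choice of the root of unity. You take $r=1$, evaluating at $\zeta_p=e^{2\pi i/p}$ for a prime $p\mid d$, so $\sigma_C(\zeta_p)$ is $-14$ or $0$ depending on $p$ and the obstruction becomes a number-theoretic condition (non-existence of certain perfect squares), which forces you to handle the $\bCP2$ side prime by prime and to dodge the genuine near-miss at $d=6$, $p=3$ by switching to $p=2$. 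The paper instead evaluates at the ``central'' root of unity $r=\lfloor m/2\rfloor$, where $\sigma_C$ is uniformly $-14$ and the correction term is approximately $d^2/2$; the obstruction then reads $13\le|f_m(d)|\le 15$ and is dispatched by a three-way size estimate on $|d|$ (with the same special treatment of $d=\pm6$ via $m=2$). The paper's choice buys a cleaner, more uniform case analysis; yours is equally valid and makes explicit why the multiplicity $7$ in $\#^7T_{2,3}$ is needed.
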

Note that $\#^7 T_{2,3}$ is also not topologically H-slice in $\CP2 \# \bCP2$, by Theorem \ref{thm:signature} applied with $[\Sigma] = 0$, $m=2$, and $r=1$. Thus, it satisfies both conditions of Theorem \ref{thm:3-cpt-link-not-top-slice-in-CP2+bCP2}. 
\begin{proof}
We identify $H_2(\CP2;\Z) \cong H_2(\bCP2;\Z) \cong \Z$. For every homology class $d\in\Z$, we rule out the possibility that $C$ bounds a locally flat disc in $\CP2$ or $\bCP2$ in that class.

Since $\Arf C = 1$, the same argument as in \cite[Corollary 1.15.(2)]{KPTR:surfaces} shows that $C$ does not bounds a disc with $d=\pm1$.

For every other $d$, the corresponding homology class is not primitive, so we can apply Theorem \ref{thm:signature}. We prepare for it by defining, for each prime power $m$ and a knot $K$, the `central' signature
\[
\sigma^{\mathrm{centr}}_m(K) :=
\begin{cases}
\sigma_K(-1) & \mbox{if $m$ is even}\\
\sigma_K\left(e^{\pi i \cdot \frac{m-1}{m}}\right) & \mbox{if $m$ is odd}
\end{cases}
\]
In our case, when $K=C$, we have $\sigma^{\mathrm{centr}}_m(K) = -14$ for all prime powers $m$.

Suppose by contradiction that $C$ bounds a disc in $\CP2$ or $\bCP2$ in homology class $d\neq \pm1$. If $m$ is a prime power that divides $d$, then using Theorem \ref{thm:signature}, the triangle inequality $|x\pm y| \geq \left||x|-|y|\right|$, and $\sigma^{\mathrm{centr}}_m(K) = -14$, we obtain
\begin{equation}
\label{eq:KPRT-powerup}
1 \geq \left|14-|f_m(d)|\right|,
\end{equation}
where
\[
f_m(d) :=
\begin{cases}
\frac{d^2}2-1 & \mbox{if $m$ is even}\\
\frac{d^2}2\cdot \frac{m^2-1}{m^2} -1 & \mbox{if $m$ is odd}
\end{cases}
\]

If $|d|\leq 5$, $d\neq1$, then it is immediate to check that $|f_m(d)|\leq 11$ for every prime power factor $m$ of $d$, so Equation \eqref{eq:KPRT-powerup} is not satisfied.

If $d = \pm6$, then we can choose $m=2$, and since $f_2(6) = 17$, again Equation \eqref{eq:KPRT-powerup} is not satisfied.

Finally, if $|d| \geq 7$, we have
\[
f_m(d) \geq \frac{d^2}2\cdot \frac{8}{9} - 1 > 20,
\]
and once again Equation \eqref{eq:KPRT-powerup} is not satisfied.

Thus, there cannot be any value of $d$ such that $C$ bounds a disc in $\CP2$ or $\bCP2$ in homology class $d$.
\end{proof}

%
%

\begin{proof}[Proof of Theorem {\ref{thm:3-cpt-link-not-top-slice-in-CP2+bCP2}}]
Suppose by contradiction that there exists three disjoint discs $D_A$, $D_B$, and $D_C$ in $\CP2 \# \bCP2$ with boundary $A$, $B$, and $C$ respectively.

The fact that $C$ is not topologically H-slice in $\CP2 \# \bCP2$ shows that $[D_C] \neq 0$. By Equation \eqref{eq:lkformula}, $[D_A] \cdot [D_C] = [D_B] \cdot [D_C] = 0$, i.e., the homology classes $[D_A]$ and $[D_B]$ are orthogonal to $[D_C]$, and since the intersection pairing on $\CP2 \# \bCP2$ is non-degenerate and of rank 2, $[D_A]$ and $[D_B]$ must in fact be linear multiples of each other. From Equation \eqref{eq:lkformula} we get
\[
[D_A] \cdot [D_B] = \pm1,
\]
and therefore $[D_A]$ and $[D_B]$ are primitive. Thus, we must have $[D_A]=\pm[D_B]$, and the previous equation implies that $D_A$ is a $(\pm1)$-framed disc with boundary $A$.

If we remove a neighbourhood of $D_A$ (which is a punctured $\CP2$ or $\bCP2$), then the complement $X$ is again a punctured $\CP2$ or $\bCP2$, by Freedman's classification theorem \cite{F:classification}.


Thus, we obtain a contradiction because by construction $C$ bounds a disc $D_C$ in $X$, but by hypothesis $C$ is topologically slice in neither $\CP2$ nor $\bCP2$.
\end{proof}

\appendix

\section*{Appendix: Assumptions for the link not smoothly slice in \texorpdfstring{$\CP2 \# \bCP2$}{a blown up CP2}}
\label{appendix:CP2bCP2}
\begin{enumerate}
    \item[\eqref{it:A1}] $L$ has a diagram as in Figure \ref{fig:S2xS2structure}, where $T_A$ (resp.\ $T_B$) is a $(1,1)$-tangle whose closure is $A$ (resp.\ $B$), and $n \in \Z$ is the number of right-handed full twists added in the region.
    \item[\eqref{it:A2}] $g_{B^4}(A) = g_{B^4}(B) = 1$.
    \item[\eqref{it:A3}] The link $L$ in Figure \ref{fig:S2xS2structure} has an ambient isotopy that swaps $A$ and $B$.
    \item[({\crtcrossreflabel{A4}[it:A4]})] $\lk(A,B) = 30\ell+1$ for some $\ell\neq0$.
    \item[\eqref{it:A5}] $\Arf A = \Arf B = 1$.
    \item[\eqref{it:A6}] $\sigma_A(\zeta_2) = \sigma_B(\zeta_2) \neq 0$.
    \item[\eqref{it:A7}] $\sigma_A(\zeta_3) = \sigma_B(\zeta_3) \neq +2$.
    \item[\eqref{it:A8}] $\sigma_A(\zeta_5) + \sigma_A(\zeta_5^2) = \sigma_B(\zeta_5) + \sigma_B(\zeta_5^2) \geq +2$.
\end{enumerate}

Assumption \eqref{it:A4} is made to subsume all the following assumptions on the linking number:
\begin{enumerate}
    \item[\eqref{it:A4a}] $\lk(A,B) \not\equiv 0 \pmod 2$.
    \item[\eqref{it:A4b}] $\lk(A,B) \not\equiv 0 \pmod 3$.
    \item[\eqref{it:A4c}] $|\lk(A,B)| \geq 4$.
    \item[\eqref{it:A4d}] $\lk(A,B) \notin \set{\pm1, \pm3, \pm5, \pm7, \pm11}$.
    \item[\eqref{it:A4e}] $\lk(A,B) \equiv 1 \pmod 3$.
    \item[\eqref{it:A4f}] $\lk(A,B) \equiv 1 \pmod 5$.
\end{enumerate}

\bibliographystyle{alpha}
\bibliography{bibliography}

\end{document}